\documentclass[11pt]{article}
\usepackage{amssymb}
\usepackage{amsmath}
\usepackage{mathrsfs}
\usepackage{graphics}
\usepackage{graphicx}
\usepackage{xcolor}
\usepackage{subfigure}
\usepackage[T1]{fontenc}
\usepackage{latexsym,amssymb,amsmath,amsfonts,amsthm}\usepackage{txfonts}
\topmargin =0mm \headheight=0mm \headsep=0mm \textheight =220mm
\textwidth =160mm \oddsidemargin=0mm\evensidemargin =0mm
\sloppy \brokenpenalty=10000

\newcommand{\be}{\begin{eqnarray}}
\newcommand{\ben}{\begin{eqnarray*}}
\newcommand{\en}{\end{eqnarray}}
\newcommand{\enn}{\end{eqnarray*}}

\newtheorem{theorem}{Theorem}[section]
\newtheorem{lemma}{Lemma}[section]
\newtheorem{prp}[theorem]{Proposition}
\newtheorem{thm}[theorem]{Theorem}

\newtheorem{dfn}{Definition}[section]

\newtheorem{remark}{Remark}

\definecolor{rr}{rgb}{0,0,0}
\begin{document}
\renewcommand{\theequation}{\arabic{section}.\arabic{equation}}
\begin{titlepage}
\title{\bf  On the small time asymptotics of quasilinear parabolic stochastic partial differential equations
}
\author{ Rangrang Zhang\\
{\small  School of  Mathematics and Statistics,
Beijing Institute of Technology, Beijing, 100081, China.}\\
({\sf rrzhang@amss.ac.cn})}
\date{}
\end{titlepage}
\maketitle

\noindent\textbf{Abstract}:
In this paper, we establish a small time large deviation principles for the quasilinear parabolic stochastic partial differential equations with multiplicative noise, which are neither monotone nor locally monotone.

\noindent \textbf{AMS Subject Classification}:\ \ 60F10, 60H15, 60G40

\noindent\textbf{Keywords}: small time asymptotics; large deviations; quasilinear stochastic partial differential equations

\section{Introduction}
In this paper, we are interested in the small time asymptotics of quasilinear parabolic partial differential equations, which can be written as
\begin{eqnarray}\label{equ-0}
\left\{
  \begin{array}{ll}
   du+div(B(u))dt=div(A(u)\nabla u)dt+\sigma(t,u)dW(t),
  & \ x\in \mathbb{T}^d,\  t\in [0,1] \\
   u(0)=f. &
  \end{array}
\right.
\end{eqnarray}
Where $W(t), t\geq 0$ is a cylindrical Brownian motion, $A(\cdot), B(\cdot)$ are appropriate coefficients specified later. The solution of (\ref{equ-0}) is denoted by $u=u(x,t)$. The precise description of the problem will be presented in the next section.

The quasilinear parabolic partial differential equations can model the phenomenon of convection-diffusion of the ideal fluids and therefore arise in a wide variety of important applications, including for instance two or three phase flows in porus media or sedimentation-consolidation processes (see, e.g. \cite{GMT} and the references therein ). The addition of a stochastic noise to this physical model is fully natural as it represents external random perturbations or a lack of knowledge of certain physical parameters.

There are several recent works about the existence and uniqueness of pathwise weak solution of the above equation, i.e. strong in the probabilistic sense and weak in the PDE sense. We mention
two of them which are relevant to our work. Debussche et al.\cite{DHV} obtained the existence and uniqueness of the Cauchy problem for quasilinear degenerate parabolic stochastic partial differential equations by a Yamada-Watanabe type argument and kinetic formulation.
 %and obtained a well-posedness theory that includes also an $L^1-$ contraction property
  %by making use of the notion of kinetic formulation and kinetic solution instead of
  %the classical notion of PDE weak solution, since the degeneracy of the diffusion matrix results
   %in further difficulties. A Yamada-Watanabe type argument (see, e.g. \cite{GK}, \cite{PR})
   %are applied to obtain the existence and the pathwise uniqueness was established as a by-product
    %in the context of kinetic solutions.
    For the nondegenerate case, Hofmanov\'{a} and Zhang \cite{H-Z} put forward a direct
    (and therefore much simpler) approach to establish the global well-posedness of the pathwise
     solution. Based on the results of \cite{H-Z}, Dong et al. \cite{DZZ} established the Freidlin-Wentzell's type large deviation principles for the strong solution of the quasilinear nondegenerate parabolic stochastic partial differential equations.
     %The existence of a pathwise solution is based on a suitable approximation procedure,
     %where the heat kernel $P_r,$ $r\geq 0$ were introduced. Based on good properties of heat
      %kernel, the authors showed the existence of the unique approximating solutions by the theory
       %of locally monotone operators and they also obtained several uniform bounds. Some additional conditions are imposed on the noise in order to obtain that the gradients of unique approximating solutions are uniformly bounded in space-time and have arbitrarily high moments based on the recent regularity results in \cite{DDMH}, then the existence of weak pathwise solution is obtained by proving the strong convergence of the approximations. The proof of uniqueness is new and much simpler than that in \cite{DHV}. The authors provided a fairly elementary and simple proof that constructing a suitable approximation to $L^1$ norm to avoid using the concept of kinetic solution.

 The purpose of this paper is to establish the small time asymptotics for the quasilinear  nondegenerate parabolic stochastic partial differential equations, which describes the behavior of the solution at a very small time. Specifically, we focus on the limiting behavior of the solution of the quasilinear parabolic stochastic partial differential equations in a time interval $[0,t]$ as $t$ goes to zero. An important motivation to consider such problem comes from Varadhan identity
\begin{eqnarray*}
\lim_{t\rightarrow 0}2t\log P\big(u(0)\in B,\ u(t)\in C\big)=-d^2(B,C),
\end{eqnarray*}
where $u$ is the strong solution of the quasilinear parabolic stochastic partial differential equations  and $d$ is an appropriate Riemann distance associated with the diffusion generated by $u$.
It's worth mentioning that the mathematical study of the small time asymptotics for finite dimensional processes was initiated by Varadhan \cite{V}. For the infinite dimensional diffusion processes, several works studied them, for example, \cite{A-K, A-Z, F-Z, H-R, ZTS} and the references therein.

\

Up to now, there are some results of the small time asymptotics for infinite dimensional stochastic partial differential equations. For example, Xu and Zhang \cite{X-Z} established the small time asymptotics of 2D Navier-Stokes equations in the state space $C([0,T],H)$. Dong and Zhang \cite{D-Z} proved the small time asymptotics of 3D stochastic primitive equations in the state space  $C([0,T],H^1)$. In this article, we are interested in the small time asymptotics of the quasilinear parabolic stochastic partial differential equations. The key step is to prove the solution $u^{\varepsilon}(t)=u(\varepsilon t)$ of (\ref{equ-0}) is exponentially equivalent to the law of the solution of
\begin{eqnarray*}
v^\varepsilon(t)=f+\sqrt{\varepsilon}\int^t_0 \sigma(\varepsilon s, v^\varepsilon(s))dW(s).
\end{eqnarray*}
The exponential equivalence will be achieved through several approximations.
The hard part is to deal with the nonlinear term $div(A(u)\nabla u)$ of (\ref{equ-0})
because it is neither monotone nor locally monotone.
To overcome this difficulty,
we adopt the method from \cite{H-Z} to introduce the heat kernel $\{P_r\}_{r>0}$ to smooth the operator $A$.
Meanwhile, in order to obtain some higher Sobolev norm estimates of $u^{\varepsilon}(t)$,
we impose a stronger condition on the initial value $f$. Thereby, under the stronger condition on $f$,
we establish the exponential equivalence by making use of properties of $\{P_r\}_{r>0}$ and
higher regularity of $u^{\varepsilon}(t)$.
In the rest part, by using suitable approximation and analytical techniques, we succeed to remove the stronger condition on $f$.

 \

This paper is organized as follows. The mathematical formulation of quasilinear parabolic stochastic partial differential equations is in Section 2. In Section 3, we introduce the small time asymptotics and state our main result. In Section 4, we prove the exponential equivalence under a stronger condition on the initial value $f$. In Section 5, we relax the condition on the initial value $f$.

\section{Framework}
 %Consider the following
% quasilinear parabolic stochastic partial differential equations (SPDEs),
%\begin{eqnarray}\label{equ-0}
%\left\{
%  \begin{array}{ll}
%   du+div(B(u))dt=div(A(u)\nabla u)dt+\sigma(u)dW(t),
%  & \ x\in \mathbb{T}^d,\  t\in [0,T] \\
%   u(0)=u_0. &
%  \end{array}
%\right.
%\end{eqnarray}

{\color{rr} We work on a finite-time
interval $[0,1]$ and consider periodic boundary conditions, that is, $x\in \mathbb{T}^d$ where $\mathbb{T}^d=[0,1]^d$ denotes the $d-$dimensional torus.}
Let $\mathcal{L}(K_1,K_2)$ (resp. $\mathcal{L}_2(K_1,K_2)$) be the space of bounded (resp. Hilbert-Schmidt) linear operators from a Hilbert space $K_1$ to another Hilbert space $K_2$, whose norm is denoted by $\|\cdot\|_{\mathcal{L}(K_1, K_2)}$(resp. $\|\cdot\|_{\mathcal{L}_2(K_1, K_2)})$. We will follow closely the framework of \cite{H-Z}. {\color{rr} In the following, we emphasize that the domain of all functional spaces is $\mathbb{T}^d$, for simplicity, we omit it. $C^1$ stands for the space of continuously differentiable functions having bounded first order derivative. $C^1_{lip}$ be the Lipschitz functions in $C^1$.} For $r\in[1,\infty]$, $L^r$ are the usual Lebesgue spaces and $\|\cdot\|_{L^r}$ represents the corresponding norm. In particular, when $p=2$, we write  $H$ for $L^2(\mathbb{T}^d)$. Moreover, we denote by $(\cdot,\cdot)$ and $\|\cdot\|_H$ the inner product and the norm of $L^2(\mathbb{T}^d)$.
In order to measure
higher regularity of functions (in the space variable) we make use of the Bessel potential spaces
$H^{a,r}(\mathbb{T}^d)$, $a\in \mathbb{R}$ and $r\in (1, \infty)$. Throughout the paper we will mostly work with the $L^2-$scale
and so we will write $H^a$ for $H^{a,2}(\mathbb{T}^d)$.
For all $a\geq0$, let
$H^a$ be the usual Sobolev space of order $a$ with the norm
\[
\|v\|^2_{H^a}=\sum_{0\leq|\alpha|\leq a}\int_{\mathbb{T}^d}|D^{\alpha}v|^2dx.
\]
{\color{rr} Here, $\alpha$ is a multi-index, that is, $\alpha=(\alpha_1, \cdot\cdot\cdot, \alpha_d)$
with non-negative integers $\alpha_i, i=1,\cdot\cdot\cdot, d$. $|\alpha|=\sum^d_{i=1}\alpha_i$.}
$H^{-a}$ stands for the topological dual of $H^a$.
Denote by $\langle\cdot,\cdot\rangle$ the duality between $H^1$ and $H^{-1}$.
Moreover, for $u,v\in H^1$, define $((u,v)):=(Du,Dv)$. By Poinc\'{a}re inequality, we know that $((u,u))=\|Du\|^2_H\cong \|u\|^2_{H^1}$.

%\subsection{Hypotheses and the global well posedness of (\ref{equ-0})}
Now, we introduce the following hypotheses.
\begin{description}
   \item[\textbf{Hypothesis H1}] \quad The flux function $B$, the diffusion matrix $A$, and the noise in (\ref{equ-0}) satisfy:
\begin{description}
  \item[(i)]
$B=(B_1,\cdot\cdot\cdot,B_d): \mathbb{R}\rightarrow \mathbb{R}^d$ is of class $C^1_{lip}$.

  \item[(ii)] $A=(A_{ij})^{d}_{i,j=1}: \mathbb{R}\rightarrow \mathbb{R}^{d\times d}$ is of class $C^1_{lip}$, uniformly positive definite and bounded, i.e. $\varrho I\leq A\leq CI$, with $\varrho>0$.
  \item[(iii)] For each $u\in H, t\in[0,1]$, $\sigma(t,u): U\rightarrow H$ defined by $\sigma(t,u)\bar{e}_k=\sigma_k(t,u(\cdot))$, where $U$ is a separable Hilbert space (with inner product $\langle\cdot,\cdot\rangle_U$ and norm $|\cdot|_U$), $(\bar{e}_k)_{k\geq 1}$ is an orthonormal basis of $U$ and  $\sigma_k(t,\cdot): \mathbb{R}\rightarrow \mathbb{R}$ are real-valued functions.
  In particular, assume that $\sigma$ satisfies the usual linear growth and Lipschitz condition
       \begin{eqnarray}\label{equa-3-1}
      \sum_{k\geq 1}|\sigma_k(t,y)|^2\leq C(1+|y|^2),\quad  \sum_{k\geq1}|\sigma_k(t,y_1)-\sigma_k(t,y_2)|^2\leq C|y_1-y_2|^2 \quad {\rm{for}}\ y, y_1, y_2\in \mathbb{R}.
       \end{eqnarray}
\end{description}
\end{description}
Let $(\Omega, \mathcal{F}, \mathcal{F}_t, P)$ be a stochastic basis with a complete, right-continuous filtration with expectation $E$. The driving process $W(t)$ is a $U-$cylindrical Wiener process defined on this stochastic basis whose paths belong to  $C([0,T],Y)$, where $Y$ is another Hilbert space such that the embedding $U\subset Y$ is Hilbert-Schmidt. $W$ admits the following decomposition
$W(t)=\sum^{\infty}_{k=1}\beta_k(t)\bar{e}_k$, $(\beta_k)_{k\geq 1}$ is a sequence of independent real-valued Brownian motions.
\begin{remark}
The above \textbf{(iii)} implies that $\sigma$ maps $H$ into $\mathcal{L}_2(U,H)$. Indeed, by (\ref{equa-3-1}),
\begin{eqnarray*}
\|\sigma(t,u(\cdot,t))\|^2_{\mathcal{L}_2(U,H)}&=&\sum_{k\geq 1}\|\sigma(t,u(\cdot,t))\bar{e}_k\|^2_H\\
&=& \sum_{k\geq 1}\|\sigma_k(t,u(\cdot,t))\|^2_H=\sum_{k\geq 1}\int_{\mathbb{T}^d}|\sigma_k(t, u(x,t))|^2dx\\
&\leq& C\int_{\mathbb{T}^d}(1+|u(x)|^2)dx\leq C(1+\|u(t)\|^2_H).
\end{eqnarray*}
Thus, for a given predictable process $u\in L^2(\Omega, L^2([0,T],H))$, the stochastic integral $t\rightarrow \int^t_0\sigma(s,u(s))dW(s)$ is a well-defined $H-$valued square integrable martingale.
Moreover, (\ref{equa-3-1}) implies that
\begin{eqnarray*}
\|\sigma(t,u_1(\cdot,t))-\sigma(t,u_2(\cdot,t))\|^2_{\mathcal{L}_2(U,H)}\leq C\|u_1(t)-u_2(t)\|^2_H.
\end{eqnarray*}
Thus, (\ref{equa-3-1}) can be replaced by
\begin{eqnarray}\label{equa-3-1-1}
\|\sigma(t,u(\cdot,t))\|^2_{\mathcal{L}_2(U,H)}\leq C(1+\|u(t)\|^2_H), \quad \|\sigma(t,u_1(\cdot,t))-\sigma(t,u_2(\cdot,t))\|^2_{\mathcal{L}_2(U,H)}\leq C\|u_1(t)-u_2(t)\|^2_H.
\end{eqnarray}
\end{remark}
Now, we recall the definition of a solution to (\ref{equ-0}) from \cite{H-Z}.
\begin{dfn}\label{dfn-1}
An $(\mathcal{F}_t)-$adapted, $H-$valued continuous process $(u(t), t\geq 0)$ is said to be a solution to equation (\ref{equ-0}) if
\begin{description}
  \item[(i)] $u\in L^2(\Omega, C([0,1],H)) \cap L^2(\Omega, L^2( [0,1], H^1))$,
  \item[(ii)] for any $\phi\in C^{\infty}(\mathbb{T}^d)$, $t>0$, the following holds almost surely
  \begin{eqnarray}\notag
  \langle u(t), \phi\rangle&-&\langle f, \phi\rangle-\int^t_0\langle B(u(s)), \nabla \phi\rangle ds\\
\label{equ-2}
  &=&-\int^t_0\langle A(u(s))\nabla u(s), \nabla \phi\rangle ds +\int^t_0\langle \sigma(u(s))dW(s),  \phi\rangle
  \end{eqnarray}
\end{description}
\end{dfn}
With the help of the global well-posedness results in \cite{H-Z}  and  a suitable approximation of initial values by smooth functions in \cite{DHV}, we have
\begin{thm}\label{thm-1-1}
Let the initial value $f\in L^p(\mathbb{T}^d)$ for all $p\in[1,\infty)$. Under the Hypothesis H1, there exists a unique solution to the quasilinear SPDE (\ref{equ-0}) that satisfies the following energy inequality
\begin{eqnarray}\label{equ-3}
E\sup_{0\leq t\leq 1}\|u(t)\|^2_H+\int^1_0 E\|u(t)\|^2_{H^1}dt<\infty.
\end{eqnarray}
\end{thm}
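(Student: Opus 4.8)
\medskip
\noindent\textbf{Proof proposal.}
The plan is to produce the solution for the rough data $f\in\bigcap_{1\le p<\infty}L^p(\mathbb{T}^d)$ as a limit of solutions with smooth initial data, for which the global well-posedness of \cite{H-Z} is available, and to borrow from \cite{DHV} the stability estimate that makes the limit passage legitimate. First I would mollify on the torus: take a standard mollifier $\{\rho_n\}$ and set $f_n:=\rho_n\ast f$, so that $f_n\in C^\infty(\mathbb{T}^d)$, $\|f_n\|_{L^p}\le\|f\|_{L^p}$ for every $p\in[1,\infty)$, and $f_n\to f$ in $L^p(\mathbb{T}^d)$ for all such $p$. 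By \cite{H-Z}, for each $n$ there is a unique solution $u_n$ of \eqref{equ-0} with $u_n(0)=f_n$ in the sense of Definition \ref{dfn-1}, carrying the extra regularity afforded by smooth data.

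Next I would derive a bound on $u_n$ uniform in $n$. Viewing \eqref{equ-0} in the Gelfand triple $H^1\subset H\subset H^{-1}$ and applying the It\^o formula to $\|u_n(t)\|_H^2$, the flux term drops out: writing $\beta=(\beta_1,\dots,\beta_d)$ with $\beta_i'=B_i$, periodicity gives $\langle B(u_n(s)),\na u_n(s)\rangle=\int_{\mathbb{T}^d}\mathrm{div}\,\beta(u_n(s))\,dx=0$. Hypothesis H1(ii) turns the parabolic term into the good term $-2\int_0^t\langle A(u_n)\na u_n,\na u_n\rangle\,ds\le-2\varrho\int_0^t\|\na u_n(s)\|_H^2\,ds$, while the It\^o correction and the martingale part are controlled via the linear growth in \eqref{equa-3-1-1} and the Burkholder--Davis--Gundy inequality. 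Gronwall's lemma then yields
\[
\sup_n\Big(E\sup_{0\le t\le1}\|u_n(t)\|_H^2+\int_0^1E\|u_n(t)\|_{H^1}^2\,dt\Big)\le C\big(1+\|f\|_H^2\big).
\]

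The crux is to upgrade this to genuine convergence. A direct $L^2$ estimate for $u_n-u_m$ breaks down, since It\^o's formula produces the cross term $\langle(A(u_n)-A(u_m))\na u_m,\na(u_n-u_m)\rangle$, which is not sign-definite and cannot be absorbed into the dissipation --- precisely the failure of (local) monotonicity of $\mathrm{div}(A(u)\na u)$. Instead I would invoke the contraction/stability estimate from the kinetic approach of \cite{DHV} (also exploited in \cite{H-Z}), which controls the difference of two solutions in a weak norm, $E\sup_{0\le t\le1}\|u_n(t)-u_m(t)\|_{L^1}\le C\|f_n-f_m\|_{L^1}\to0$. Hence $\{u_n\}$ is Cauchy in $L^1(\Omega;C([0,1];L^1))$; passing to a subsequence, $u_n\to u$ $P$-a.s.\ in $C([0,1];L^1)$ and $u_n(x,t)\to u(x,t)$ for a.e.\ $(x,t)$, $P$-a.s. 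Combined with the uniform bound above, Fatou's lemma and weak compactness of the gradients give $E\sup_{0\le t\le1}\|u(t)\|_H^2+\int_0^1E\|u(t)\|_{H^1}^2\,dt<\infty$ with $\na u_n\rightharpoonup\na u$ weakly in $L^2(\Omega\times[0,1];H)$, so \eqref{equ-3} holds; the pathwise continuity $u\in C([0,1];H)$ then follows from the standard Gelfand-triple continuity lemma once $u$ is shown to solve the equation.

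Finally I would pass to the limit in the weak formulation \eqref{equ-2} for a fixed $\phi\in C^\infty(\mathbb{T}^d)$: the linear and flux terms converge using $u_n\to u$ in $L^1$ together with the sublinear growth and continuity of $B$; for the delicate term, $A(u_n)\na\phi\to A(u)\na\phi$ strongly in $L^2(\Omega\times[0,1];H)$ by dominated convergence (boundedness and continuity of $A$, a.e.\ convergence of $u_n$), while $\na u_n\rightharpoonup\na u$ weakly, so $\int_0^t\langle A(u_n)\na u_n,\na\phi\rangle\,ds\to\int_0^t\langle A(u)\na u,\na\phi\rangle\,ds$; and the stochastic term converges by the Lipschitz bound \eqref{equa-3-1-1} and the It\^o isometry. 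Since $u_n(0)=f_n\to f$, we get $u(0)=f$, so $u$ is a solution. Uniqueness follows by applying the same contraction estimate with $f_n\equiv f_m\equiv f$, or directly from the pathwise uniqueness of \cite{H-Z} extended to $L^p$ data via that estimate. The main obstacle throughout is the convergence step: because $\mathrm{div}(A(u)\na u)$ is neither monotone nor locally monotone, one must rely on the weak ($L^1$, kinetic) contraction of \cite{DHV,H-Z} rather than on $L^2$-energy estimates for the difference, after which the smoothness of $\phi$ renders the remaining limits routine.
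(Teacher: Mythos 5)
Your proposal is correct and follows essentially the same route the paper takes (the paper only sketches it in one sentence: combine the global well-posedness for smooth data from \cite{H-Z} with the approximation of initial values from \cite{DHV}, relying on the $L^1$-contraction to pass to the limit, exactly as you do; the paper itself carries out the analogous $\phi_m$-approximation of the $L^1$ norm later in Lemma \ref{lem-2}). The only cosmetic difference is that you dispose of the flux term by the exact divergence identity $\langle B(u_n),\na u_n\rangle=\int_{\mathbb{T}^d}\mathrm{div}\,\beta(u_n)\,dx=0$, whereas the paper's energy estimates bound it by Cauchy--Schwarz and Young and absorb it into the dissipation; both are valid.
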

%\begin{remark}
%The solution here is pathwise, the so-called strong solution in the probabilistic sense.
%\end{remark}
\section{A priori estimates}
In order to prove the small asymptotics of the solution to (\ref{equ-0}), we need to make some higher Sobolev norm estimates of $u$.
\begin{lemma}\label{lem-1}
For any initial value $f\in  L^p(\mathbb{T}^d)$ for all $p\in[1,\infty)$. Suppose the Hypothesis H1 holds, then for any $p\in[1,\infty)$,
\begin{eqnarray}\label{eqqq-67}
E\sup_{0\leq t\leq 1}\|u(t)\|^{2p}_H+E\int^1_0\|u(t)\|^{2(p-1)}_H \|u(t)\|^2_{H^1}dt<\infty,
\end{eqnarray}
and
\begin{eqnarray}\label{eqqq-58}
E\Big(\int^1_0\|u(t)\|^2_{H^1}dt\Big)^p<\infty.
\end{eqnarray}
\end{lemma}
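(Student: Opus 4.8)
The plan is to obtain \eqref{eqqq-67} by applying It\^o's formula to $t\mapsto\|u(t)\|_H^{2p}$, and then to deduce \eqref{eqqq-58} from the $p=1$ energy balance by raising it to the $p$-th power and taking expectations. The only genuinely delicate point is that the nonlinearity $\mathrm{div}(A(u)\na u)$ is not monotone; this is circumvented by testing the equation against $u$ itself, where the uniform ellipticity $\varrho I\le A$ produces a favourable dissipation term, while the first-order (flux) term cancels thanks to the periodic boundary conditions.

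Concretely, I would first note that by Definition~\ref{dfn-1} and Theorem~\ref{thm-1-1} the process $u$ solves $du=\big(\mathrm{div}(A(u)\na u)-\mathrm{div}(B(u))\big)dt+\sigma(t,u)\,dW$ in the Gelfand triple $H^1\subset H\subset H^{-1}$, with the drift in $L^2([0,1];H^{-1})$ almost surely (since $\|\mathrm{div}(A(u)\na u)\|_{H^{-1}}\le C\|\na u\|_H$, $\|\mathrm{div}(B(u))\|_{H^{-1}}\le\|B(u)\|_H\le C(1+\|u\|_H)$, and $\int_0^1\|u\|_{H^1}^2\,dt<\infty$ a.s. by \eqref{equ-3}), so that the It\^o formula for $\|u(t)\|_H^2$ applies. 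Testing against $u$ and integrating by parts on $\mathbb{T}^d$, the flux term vanishes, $\langle \mathrm{div}(B(u)),u\rangle=-\sum_{i=1}^d\int_{\mathbb{T}^d}\pa_i\widetilde B_i(u)\,dx=0$ with $\widetilde B_i(s):=\int_0^s B_i(\tau)\,d\tau$, while $\langle\mathrm{div}(A(u)\na u),u\rangle=-\int_{\mathbb{T}^d}A(u)\na u\cdot\na u\,dx\le-\varrho\|\na u\|_H^2$; combined with \eqref{equa-3-1-1} this yields
\ben
d\|u\|_H^2+2\varrho\|\na u\|_H^2\,dt\le C(1+\|u\|_H^2)\,dt+2\langle u,\sigma(t,u)\,dW\rangle .
\enn
Applying the one-dimensional It\^o formula to $x\mapsto x^p$ with $x=\|u\|_H^2$, and bounding the quadratic variation of $\|u\|_H^2$ by $C\|u\|_H^2(1+\|u\|_H^2)\,dt$ via Cauchy--Schwarz and \eqref{equa-3-1-1}, I would obtain
\ben
d\|u\|_H^{2p}+2p\varrho\,\|u\|_H^{2(p-1)}\|\na u\|_H^2\,dt\le C_p(1+\|u\|_H^{2p})\,dt+2p\,\|u\|_H^{2(p-1)}\langle u,\sigma(t,u)\,dW\rangle .
\enn

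Next I would integrate this over $[0,t\wedge\tau_N]$ with $\tau_N:=\inf\{t:\|u(t)\|_H\ge N\}$ (so that all quantities are a priori finite by $H$-continuity of $u$), and extract two bounds from the same inequality: evaluating at the terminal time and taking expectations gives $E\int_0^{1\wedge\tau_N}\|u\|_H^{2(p-1)}\|\na u\|_H^2\,dt\le C_p(\|f\|_H^{2p}+1)+C_pE\int_0^1(\cdots)$, while taking the supremum in $t$, estimating the martingale by Burkholder--Davis--Gundy and then Young's inequality to absorb $\tfrac12 E\sup_{s\le t\wedge\tau_N}\|u\|_H^{2p}$ into the left-hand side, followed by Gronwall's lemma, gives a bound on $E\sup_{t\le1\wedge\tau_N}\|u(t)\|_H^{2p}$ uniform in $N$; letting $N\to\infty$ by monotone convergence yields \eqref{eqqq-67}. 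For \eqref{eqqq-58} I would start from the $p=1$ balance, rearranged as $2\varrho\int_0^1\|\na u\|_H^2\,dt\le\|f\|_H^2+C\int_0^1(1+\|u\|_H^2)\,dt+2\sup_{0\le t\le1}\big|\int_0^t\langle u,\sigma\,dW\rangle\big|$, raise to the power $p$, take expectations, and control each term using \eqref{eqqq-67}: $\big(\int_0^1(1+\|u\|_H^2)\,dt\big)^p\le C_p(1+\sup_t\|u\|_H^{2p})$ by Jensen on $[0,1]$, and $E\sup_t\big|\int_0^t\langle u,\sigma\,dW\rangle\big|^p\le C_pE\big(\int_0^1\|u\|_H^2(1+\|u\|_H^2)\,dt\big)^{p/2}\le C_p(1+E\sup_t\|u\|_H^{2p})<\infty$ by BDG; since $\|u\|_{H^1}^2\le C(\|u\|_H^2+\|\na u\|_H^2)$ and $\int_0^1\|u\|_H^2\,dt\le\sup_t\|u\|_H^2$, this gives \eqref{eqqq-58}.

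The step I expect to be the main obstacle is the rigorous justification of the It\^o expansion in this non-variational, non-monotone setting: the a priori regularity $\int_0^1\|u\|_{H^1}^2\,dt<\infty$ from Theorem~\ref{thm-1-1} is precisely what allows one to place the equation in the triple $H^1\subset H\subset H^{-1}$ and invoke the It\^o formula for the square of the $H$-norm, and the combination of the stopping-time localization $\tau_N$ with the Young-inequality absorption of the stochastic integral is what makes the estimate close for every $p\in[1,\infty)$. The flux cancellation $\int_{\mathbb{T}^d}\mathrm{div}(B(u))\,u\,dx=0$ on the torus is equally essential, since $B$ is only $C^1_{lip}$ and $\langle B(u),\na u\rangle$ would otherwise admit no a priori control.
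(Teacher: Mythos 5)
Your argument is correct and follows essentially the same route as the paper: It\^o's formula for $\|u(t)\|_H^{2p}$, Burkholder--Davis--Gundy plus Young's inequality to absorb $\tfrac12 E\sup\|u\|_H^{2p}$, Gronwall for \eqref{eqqq-67}, and then the $p=1$ energy balance raised to the $p$-th power together with \eqref{eqqq-67} for \eqref{eqqq-58}. The only (harmless) deviations are that you kill the flux term exactly via the antiderivative identity $\int_{\mathbb{T}^d}\partial_i\widetilde B_i(u)\,dx=0$ on the torus, whereas the paper keeps it and absorbs it into the dissipation by Cauchy--Schwarz and Young's inequality, and that you add an explicit stopping-time localization $\tau_N$ that the paper leaves implicit.
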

\begin{proof}
Applying It\^{o} formula to $\|u(t)\|^{2}_H$, we get
\begin{eqnarray*}
\|u(t)\|^2_H&=&\|f\|^2_H-2 \int^t_0\langle u(s), div (B(u(s)))\rangle ds +2\int^t_0\langle u(s), div \big(A(u(s))\nabla u(s)\big)\rangle ds\\
&&\ +2\int^t_0 \langle u(s), \sigma(s, u(s))dW(s)\rangle + \int^t_0 \|\sigma( s, u(s))\|^2_{\mathcal{L}_2(U,H)}ds\\
&=& \|f\|^2_H+2 \int^t_0\langle \nabla u(s), B(u(s))\rangle ds-2\int^t_0\langle \nabla u(s), A(u(s))\nabla u(s)\rangle ds\\
&&\ +2\int^t_0 \langle u(s), \sigma(s, u(s))dW(s)\rangle+ \int^t_0 \|\sigma( s, u(s))\|^2_{\mathcal{L}_2(U,H)}ds.
\end{eqnarray*}
Then, it yields that
\begin{eqnarray*}
\langle\|u\|^{2}_H\rangle_s=4\int^s_0\|u(l)\|^{2}_H\|\sigma(l,u(l))\|^{2}_{\mathcal{L}_2(U,H)}dl.
\end{eqnarray*}
Employing  It\^{o} formula again to $\|u(t)\|^{2p}_H$, it gives that
\begin{eqnarray*}
\|u(t)\|^{2p}_H&=&\|f\|^{2p}_H+p\int^t_0\|u(s)\|^{2(p-1)}_Hd\|u(s)\|^{2}_H+\frac{1}{2}p(p-1)\int^t_0\|u(s)\|^{2(p-2)}_Hd\langle\|u\|^{2}_H\rangle_s\\
&=&\|f\|^{2p}_H+2p\int^t_0\|u(s)\|^{2(p-1)}_H\langle \nabla u(s), B(u(s))\rangle ds\\
&&\ -2p\int^t_0\|u(s)\|^{2(p-1)}_H\langle \nabla u(s), A(u(s))\nabla u(s)\rangle ds\\
&&\ +2p\int^t_0\|u(s)\|^{2(p-1)}_H\langle u(s), \sigma(s, u(s))dW(s)\rangle\\
&&\ +p\int^t_0\|u(s)\|^{2(p-1)}_H\|\sigma( s, u(s))\|^2_{\mathcal{L}_2(U,H)}ds\\
&&\ +2p(p-1)\int^t_0\|u(s)\|^{2(p-2)}_H\|u(s)\|^{2}_H\|\sigma(s,u(s))\|^{2}_{\mathcal{L}_2(U,H)}ds\\
&:=& \|f\|^{2p}_H+I_1(t)+I_2(t)+I_3(t)+I_4(t)+I_5(t).
\end{eqnarray*}
By Hypothesis H1 and Young's inequality, we get
\begin{eqnarray*}
I_1(t)&\leq& 2pC\int^t_0\|u(s)\|^{2(p-1)}_H\|u(s)\|_{H^1}(1+\|u(s)\|_H) ds\\
&\leq& p\varrho \int^t_0\|u(s)\|^{2(p-1)}_H\|u(s)\|^2_{H^1}ds+\frac{pC}{\varrho}t+\frac{pC}{\varrho}\int^t_0\|u(s)\|^{2p}_Hds.
\end{eqnarray*}
Utilizing Hypothesis H1, it follows that
\begin{eqnarray*}
I_2(t)\leq -2p\varrho \int^t_0\|u(s)\|^{2(p-1)}_H\|u(s)\|^2_{H^1}ds.
\end{eqnarray*}
Moreover, by (\ref{equa-3-1-1}), it yields that
\begin{eqnarray*}
I_4(t)\leq pCt+pC\int^t_0\|u(s)\|^{2p}_Hds,
\end{eqnarray*}
and
\begin{eqnarray*}
I_5(t)&\leq& 2p(p-1)C\int^t_0\|u(s)\|^{2(p-2)}_H\|u(s)\|^{2}_H(1+\|u(s)\|^{2}_H)ds\\
&\leq& 2p(p-1)Ct+2p(p-1)C\int^t_0\|u(s)\|^{2p}_Hds.
\end{eqnarray*}

Collecting all the above estimates, we get
\begin{eqnarray*}
&&\|u(t)\|^{2p}_H+p\varrho \int^t_0\|u(s)\|^{2(p-1)}_H\|u(s)\|^2_{H^1}ds\\
&\leq& \|f\|^{2p}_H+\frac{pC}{\varrho}t+pCt+ 2p(p-1)Ct+\Big(\frac{pC}{\varrho}+pC+ 2p(p-1)C\Big)\int^t_0\|u(s)\|^{2p}_Hds+|I_3(t)|.
\end{eqnarray*}
Hence, we conclude that
\begin{eqnarray}\notag
&&E\left[\sup_{0\leq s\leq 1}\|u(s)\|^{2p}_H+p\varrho \int^1_0\|u(s)\|^{2(p-1)}_H\|u(s)\|^2_{H^1}ds\right]\\ \notag
&\leq& \|f\|^{2p}_H+\frac{pC}{\varrho}+pC+ 2p(p-1)C\\
\label{eqqq-68}
&&\ +\Big(\frac{pC}{\varrho}+pC+ 2p(p-1)C\Big)\int^1_0E\sup_{0\leq s\leq t}\|u(s)\|^{2p}_Hdt+E\sup_{0\leq s\leq 1}|I_3(s)|.
\end{eqnarray}
Applying the Burkholder-Davis-Gundy inequality, we deduce that
\begin{eqnarray}\notag
E\sup_{0\leq s\leq 1}|I_3(s)|&\leq&2pC_pE\int^1_0\|u(s)\|^{4(p-1)}_H\|u(s)\|^2_H(1+\|u(s)\|^2_H)ds\\ \notag
&\leq& 2pC_pE\Big[\sup_{0\leq s\leq 1}\|u(s)\|^{p}_H\Big(\int^1_0\|u(s)\|^{2(p-1)}_H(1+\|u(s)\|^2_H)ds\Big)^{\frac{1}{2}}\Big]\\ \notag
&\leq& 2pC_pE\Big[\sup_{0\leq s\leq 1}\|u(s)\|^{p}_H\Big(1+\int^1_0\|u(s)\|^{2p}_Hds\Big)^{\frac{1}{2}}\Big]\\
\label{eqqq-69}
&\leq& \frac{1}{2}E\sup_{0\leq s\leq 1}\|u(s)\|^{2p}_H+2pC+2pCE\int^1_0\|u(s)\|^{2p}_Hds.
\end{eqnarray}
Putting (\ref{eqqq-69}) into  (\ref{eqqq-68}), it yields that
\begin{eqnarray*}\notag
&&E\Big[\sup_{0\leq s\leq 1}\|u(s)\|^{2p}_H+2p\varrho \int^1_0\|u(s)\|^{2(p-1)}_H\|u(s)\|^2_{H^1}ds\Big]\\ \notag
&\leq& 2\|f\|^{2p}_H+\frac{2pC}{\varrho}+2pC+ 4p(p-1)C\\
&&\ +\Big(\frac{2pC}{\varrho}+2pC+ 4p(p-1)C\Big)\int^1_0E\Big[\sup_{0\leq l\leq s}\|u(l)\|^{2p}_H+2p\varrho \int^s_0\|u(l)\|^{2(p-1)}_H\|u(l)\|^2_{H^1}dl\Big]ds.
\end{eqnarray*}
Applying Gronwall inequality, we get
\begin{eqnarray*}
&&E\left[\sup_{0\leq s\leq 1}\|u(s)\|^{2p}_H+2p\varrho \int^1_0\|u(s)\|^{2(p-1)}_H\|u(s)\|^2_{H^1}ds\right]\\ \notag
&\leq& \Big[2\|f\|^{2p}_H+\frac{2pC}{\varrho}+2pC+ 4p(p-1)C\Big]\exp\Big\{\frac{2pC}{\varrho}+2pC+ 4p(p-1)C\Big\},
\end{eqnarray*}
which implies (\ref{eqqq-67}) is valid.

For the special case $p=2$, we have
\begin{eqnarray*}
&&\|u(t)\|^2_H+\varrho\int^t_0\|u(s)\|^2_{H^1}ds\\
&\leq& \|f\|^2_H+\frac{C}{\varrho}t+Ct+\Big(\frac{C}{\varrho}+C\Big) \int^t_0 \|u(s)\|^2_{H}ds
+2\Big|\int^t_0 \langle u(s), \sigma(s, u(s))dW(s)\rangle\Big|.
\end{eqnarray*}
Then, it gives that
\begin{eqnarray}\notag
&&\varrho^p E\left(\int^1_0\|u(s)\|^2_{H^1}ds\right)^{p}\\ \notag
&\leq& C_p\Big(\|f\|^2_H+\frac{C}{\varrho}+C\Big)^{p}+C_p\Big(\frac{C}{\varrho}+C\Big)^{p}E\left(\int^1_0 \|u(s)\|^2_{H}ds\right)^p\\ \notag
&&\
+2C_p  E\sup_{0\leq s\leq 1}\Big|\int^s_0 \langle u(l), \sigma(l, u(l))dW(l)\rangle\Big|^{p}\\ \notag
&\leq& C_p\Big(\|f\|^2_H+\frac{C}{\varrho}+C\Big)^{p}+C_p\Big(\frac{C}{\varrho}+C\Big)^{p}E\sup_{0\leq s\leq 1} \|u(s)\|^{2p}_{H}\\
\label{eqqq-71}
&&\
+2C_p  E\sup_{0\leq s\leq 1}\Big|\int^s_0 \langle u(l), \sigma(l, u(l))dW(l)\rangle\Big|^{p}.
\end{eqnarray}
By the Burkholder-Davis-Gundy inequality, we reach
\begin{eqnarray}\notag
&&2C_p  E\sup_{0\leq s\leq 1}\Big|\int^s_0 \langle u(l), \sigma(l, u(l))dW(l)\rangle\Big|^{p}\\ \notag
&\leq&2C_pE\Big(\int^1_0\big(1+\|u(s)\|^2_H\big)ds\Big)^{\frac{p}{2}}\\
\label{eqqq-72}
&\leq& 2C_p+2C_pE\sup_{0\leq s\leq 1}\|u(s)\|^{2p}_H.
\end{eqnarray}
Putting (\ref{eqqq-72}) into (\ref{eqqq-71}), we obtain
\begin{eqnarray}\notag
&&\varrho^p E\left(\int^1_0\|u(s)\|^2_{H^1}ds\right)^{p}\\ \notag
&\leq& C_p\Big(\|f\|^2_H+\frac{C}{\varrho}+C\Big)^{p}+2C_p+\Big[2C_p+C_p\Big(\frac{C}{\varrho}+C\Big)^{p}\Big]E\sup_{0\leq s\leq 1} \|u(s)\|^{2p}_{H}.
\end{eqnarray}
By (\ref{eqqq-67}), we get
\begin{eqnarray*}
E\left(\int^1_0\|u(s)\|^2_{H^1}ds\right)^{p}\leq C(p,\varrho),
\end{eqnarray*}
hence, we complete the proof of (\ref{eqqq-58}).
\end{proof}

\section{Small time asymptotics and the statement of our main result}
Let $\varepsilon>0$, by the scaling property of the Brownian motion, it is easy to see that $u(\varepsilon t)$ coincides in law with the solution of the following equation:
\begin{eqnarray}\notag
u^{\varepsilon}_f(t)+\varepsilon\int^t_0 div (B(u^{\varepsilon}_f(s)))ds&=&f(x)+\varepsilon \int^t_0 div\Big(A(u^{\varepsilon}_f(s))\nabla u^{\varepsilon}_f(s)\Big)ds\\
\label{eqq-5}
&&\ +\sqrt{\varepsilon} \int^t_0 \sigma(\varepsilon s, u^{\varepsilon}_f(s))dW(s).
\end{eqnarray}
Let $\mu^{\varepsilon}_f$ be the law of $u^{\varepsilon}_f(\cdot)$ on $C([0,1],H)$. Define a functional $I(g)$ on $C([0,1],H)$ by
\begin{eqnarray}\label{eqq-6}
I(g)=\inf_{h\in\Gamma_g}\Big\{\frac{1}{2}\int^1_0|\dot{h}(t)|^2_{U}dt\Big\},
\end{eqnarray}
where
\begin{eqnarray*}
\Gamma_g&=&\Big\{h\in C([0,1],U): h(\cdot) \ is\ absolutely\ continuous\ and\ such\ that\ \\
&& g(t)=f+\int^t_0 \sigma(s, g(s))\dot{h}(s)ds, 0\leq t\leq 1 \Big \}.
\end{eqnarray*}

To establish the small time asymptotics of the quasilinear SPDE (\ref{equ-0}), we need some additional conditions on the diffusion coefficient $\sigma$.
\begin{description}
  \item[\textbf{Hypothesis H2}] For each $u\in H^1$ and $t\in [0,1]$, the mapping $\sigma(t,u):U\rightarrow H^1$ satisfies
  \begin{eqnarray}\label{e-1}
  \|\sigma(t,u)\|^2_{\mathcal{L}_2(U,H^1)}\leq C(1+\|u\|^2_{H^1}),
  \end{eqnarray}
  and for $u_1, u_2\in H^1$,
   \begin{eqnarray}\label{e-2}
  \|\sigma(t,u_1)-\sigma(t,u_2)\|^2_{\mathcal{L}_2(U,H^1)}\leq C\|u_1-u_2\|^2_{H^1}.
  \end{eqnarray}
\end{description}

Now, we recall some notations and a condition on the diffusion coefficient $\sigma$ from \cite{H-Z}.
Denote by $\zeta(K,X)$ the space of the $\zeta-$radonifying operators from a separable Hilbert space $K$ to a $2-$smooth Banach space
$X$ ( with the norm  $\|\cdot\|_X$). We recall that $\Psi\in \zeta(K,X)$ if the series
\[
\sum_{k\geq 0}\zeta_k \Psi(e_k)
\]
converges in $L^2(\tilde{\Omega}, X)$, for any sequence $(\zeta_k)_{k\geq0}$ of independent Gaussian real-valued random
variables on a probability space $(\tilde{\Omega},\tilde{\mathcal{F}},\tilde{P} )$ and any orthonormal basis $(e_k)_{k\geq 0}$ of $K$. Then, this
space is endowed with the norm
\begin{eqnarray*}
\|\Psi\|_{\zeta(K,X)}:=\Big(\tilde{E}\|\sum^{\infty}_{k=1}\zeta_k\Psi(e_k)\|^2_X\Big)^{\frac{1}{2}}, \quad \Psi\in \zeta(K,X).
\end{eqnarray*}
The space $\zeta(K,X)$ does not depend on $(\zeta_k)_{k\geq 1}$, nor on $(\zeta_k)_{k\geq 1}$ and is a Banach space.

Recall that the Bessel potential spaces $H^{a,b}$ with $a\geq 0$ and $b\in [2, \infty)$
belong to the class of $2-$smooth Banach spaces and hence they are well suited for the stochastic
It\^{o} integration (see \cite{Br}, \cite{BP} for the precise construction of the stochastic integral).
With this notation in hand, we state our last assumption upon the coefficient $\sigma$ to ensure the existence of the stochastic integral in (\ref{equ-0}) as an
$H^{a,b}$-valued process.
\begin{description}
   \item[\textbf{Hypothesis H3}] \quad For $a<2$, $b\in [2,\infty)$ and any $t\in [0,1]$,
   \begin{eqnarray}\label{eqq-1}
   \|\sigma(t,u)\|_{\zeta(U, H^{a,b})}\leq \left\{
                                            \begin{array}{ll}
                                              C(1+\|u\|_{H^{a,b}}), & a\in [0,1], \\
                                              C(1+\|u\|_{H^{a,b}}+\|u\|^a_{H^{1,ab}}), & a>1.
                                            \end{array}
                                          \right.
\end{eqnarray}
\end{description}
Detailed discussion of this condition can be found in \cite{DDMH,H-Z}.

Recall that $\mu^{\varepsilon}_f$ is the law of $u^{\varepsilon}_f(\cdot)$ on $C([0,1],H)$. The main result of this article reads as follows.
\begin{thm}\label{thm-1}
 For any initial value $f\in L^p(\mathbb{T}^d)$ for all $p\in [1,\infty)$, under Hypotheses H1-H3, $\mu^{\varepsilon}_f$ satisfies a large deviation principle with the rate function $I(\cdot)$ defined by (\ref{eqq-6}), that is,
\begin{description}
  \item[(i)] For any closed subset $F\subset C([0,1],H) $,
  \[
  \limsup_{\varepsilon\rightarrow 0}\varepsilon \log \mu^{\varepsilon}_{f}(F)\leq -\inf_{g\in F}I(g).
  \]
  \item[(ii)] For any open subset $G\subset C([0,1],H) $,
  \[
  \liminf_{\varepsilon\rightarrow 0}\varepsilon \log \mu^{\varepsilon}_{f}(G)\geq -\inf_{g\in G}I(g).
  \]
\end{description}
\end{thm}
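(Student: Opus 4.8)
The plan is to follow the classical strategy for small time large deviations of SPDEs: reduce the problem, via exponential equivalence, to a much simpler SDE whose LDP is already understood. Concretely, set $u^\varepsilon(t) = u(\varepsilon t)$, which by Section~4 has the law $\mu^\varepsilon_f$ of the solution $u^\varepsilon_f$ of \eqref{eqq-5}, and introduce the auxiliary process
\begin{eqnarray*}
v^\varepsilon(t) = f + \sqrt{\varepsilon}\int^t_0 \sigma(\varepsilon s, v^\varepsilon(s))\,dW(s).
\end{eqnarray*}
For $v^\varepsilon$, a Freidlin--Wentzell-type LDP on $C([0,1],H)$ with rate function $I(\cdot)$ given by \eqref{eqq-6} follows from the standard small-noise theory for additive/multiplicative stochastic integrals with Lipschitz, linearly growing coefficients (the contraction/weak-convergence approach; see e.g. the references cited for the small-time setting). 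Granting this, by the exponential equivalence theorem for large deviations it suffices to prove that $u^\varepsilon_f$ and $v^\varepsilon$ are exponentially equivalent in $C([0,1],H)$, i.e.
\begin{eqnarray*}
\limsup_{\varepsilon\rightarrow 0}\varepsilon \log P\Big(\sup_{0\leq t\leq 1}\|u^\varepsilon_f(t)-v^\varepsilon(t)\|_H > \delta\Big) = -\infty \qquad \text{for every } \delta>0.
\end{eqnarray*}
Then Theorem~\ref{thm-1} is immediate from the LDP for $v^\varepsilon$.

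The bulk of the work is the exponential equivalence, and here the obstacle is precisely the term $\varepsilon\,\mathrm{div}(A(u^\varepsilon_f)\nabla u^\varepsilon_f)$: subtracting the two equations and testing the difference $w^\varepsilon = u^\varepsilon_f - v^\varepsilon$ against itself produces $-\varepsilon\langle A(u^\varepsilon_f)\nabla u^\varepsilon_f, \nabla w^\varepsilon\rangle$, and because $A$ is neither monotone nor locally monotone one cannot absorb this into a good sign term directly. Following the approach of \cite{H-Z}, I would regularize by inserting the heat semigroup $\{P_r\}_{r>0}$: replace $A(u)$ by $A(P_r u)$ (equivalently work with the mollified nonlinearity), use the smoothing and commutator estimates for $P_r$ to control the error $\mathrm{div}((A(u)-A(P_r u))\nabla u)$ in terms of higher Sobolev norms of $u^\varepsilon_f$, and then let $r\to 0$ along with $\varepsilon\to 0$ in a coordinated way. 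This is where the stronger condition on $f$ announced in the introduction is needed: under it, one obtains uniform-in-$\varepsilon$ a~priori bounds on $E\sup_t\|u^\varepsilon_f(t)\|^{2p}_{H^a}$ and on $E\big(\int_0^1\|u^\varepsilon_f(t)\|^2_{H^{a+1}}dt\big)^p$ for suitable $a>0$ and all $p$ (the analogues of Lemma~\ref{lem-1} and its higher-regularity companions, exploiting Hypotheses H2--H3), and these polynomial-moment bounds, via a Chebyshev/Markov argument with the $\varepsilon\log$ rescaling and the fact that the $\varepsilon$-prefactor multiplies the troublesome drift, are exactly what forces the probability above to decay super-exponentially. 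The chain of approximations is: (a) freeze/mollify the coefficient $A$ via $P_r$; (b) estimate $w^\varepsilon$ in $H$ using Itô's formula, the monotonicity-type structure recovered for $A(P_r u)$, the Lipschitz bound \eqref{equa-3-1-1} on $\sigma$, Gronwall, and Burkholder--Davis--Gundy; (c) use the higher-regularity estimates to bound the $P_r$-error uniformly; (d) conclude exponential equivalence of $u^\varepsilon_f$ with the mollified system, and of the mollified system with $v^\varepsilon$, then combine.

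Finally, to remove the stronger hypothesis on $f$ and obtain Theorem~\ref{thm-1} for all $f\in L^p(\mathbb{T}^d)$, $p\in[1,\infty)$, I would approximate $f$ in the relevant topology by smooth initial data $f_n$ satisfying the stronger condition, use Theorem~\ref{thm-1-1} and the energy inequalities to show that the corresponding solutions $u^\varepsilon_{f_n}$ are exponentially good approximations of $u^\varepsilon_f$ uniformly in small $\varepsilon$ (again via Itô's formula on $\|u^\varepsilon_f - u^\varepsilon_{f_n}\|_H^2$, the uniform positive-definiteness of $A$ giving the dissipative term, the Lipschitz property of $\sigma$, and a Gronwall estimate whose constant is uniform in $\varepsilon$ because of the $\varepsilon$ weights), and check that the rate functions $I_{f_n}$ converge to $I_f$ in the sense required by the approximation lemma for large deviations. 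Assembling (i) the LDP for $v^\varepsilon$, (ii) exponential equivalence of $u^\varepsilon_{f_n}$ and $v^\varepsilon_{f_n}$ under the stronger condition, and (iii) the exponential approximation $u^\varepsilon_{f_n}\to u^\varepsilon_f$ and $v^\varepsilon_{f_n}\to v^\varepsilon_f$, yields the full statement. The main difficulty throughout remains step (b)--(c): making the heat-kernel regularization of the non-monotone operator $A$ quantitative enough that its contribution is genuinely negligible on the exponential scale.
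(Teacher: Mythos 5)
Your proposal follows essentially the same route as the paper: the LDP for the auxiliary process $v^\varepsilon$ is taken from the classical theory, the problem is reduced via Theorem 4.2.13 of Dembo--Zeitouni to the exponential equivalence (\ref{eq-8}), which is proved first for regular initial data by smoothing the non-monotone coefficient with the heat semigroup (the paper uses $A_r(u)=P_r(A(u))$ rather than $A(P_ru)$, and the stronger condition $f\in C^{1+l}$ buys the uniform $L^\infty$ gradient bound (\ref{eqq-3}) rather than higher $H^a$ moments), and then extended to general $f\in L^p$ by approximating the initial datum. The only cosmetic difference is your final remark about convergence of rate functions $I_{f_n}\to I_f$, which is unnecessary: since the exponential equivalence is established directly between $u^\varepsilon_f$ and $v^\varepsilon_f$ for the fixed initial value $f$, the rate function is always the one attached to $v^\varepsilon_f$.
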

\begin{proof}
 Let $v^{\varepsilon}_f$ be the solution of the stochastic equation
\begin{eqnarray}\label{eq-7}
v^{\varepsilon}_f(t)=f(x)+\sqrt{\varepsilon}\int^t_0\sigma(\varepsilon s, v^{\varepsilon}_f(s))dW(s),
\end{eqnarray}
and $\vartheta^{\varepsilon}_f$ be the law of $v^{\varepsilon}_f(\cdot)$ on $C([0,1],H)$. By \cite{Daprato}, we know that $\vartheta^{\varepsilon}_f$ satisfies a large deviation principle with the rate function $I(\cdot)$. Based on Theorem 4.2.13 in \cite{DZ}, it suffices to show that two families of the probability measures $\mu^{\varepsilon}_f$ and $\vartheta^{\varepsilon}_f$ are exponentially equivalent, that is, for any $\delta >0$,
\begin{eqnarray}\label{eq-8}
\lim_{\varepsilon\rightarrow 0}\varepsilon \log P\Big(\sup_{0\leq t\leq 1}\|u^{\varepsilon}_f(t)-v^{\varepsilon}_f(t)\|^2_H>\delta\Big)=-\infty.
\end{eqnarray}
In the following, we will divide the proof of (\ref{eq-8}) into two steps.  Firstly, we prove (\ref{eq-8}) holds under a stronger condition on the initial value $f$ in Section \ref{section-1} (see Proposition \ref{prp-0}). Secondly, we succeed to verify (\ref{eq-8}) holds without the stronger condition in Section \ref{section-2} (see Proposition \ref{prp-2}).

\end{proof}

\textbf{From now on, for the sake of simplicity, we denote that $u^{\varepsilon}=u^{\varepsilon}_f$ and $v^{\varepsilon}=v^{\varepsilon}_f$ when the initial value is not emphasized.}

\subsection{A priori estimates}
In order to prove  (\ref{eq-8}), a priori estimates of $u^{\varepsilon}$ and $v^{\varepsilon}$ are necessary.

Define
\[
\Big(|u^{\varepsilon}|^H_{H^1}(1)\Big)^2=\sup_{0\leq t\leq  1}\|u^{\varepsilon}(t)\|^2_H+\varepsilon \varrho \int^1_0 \|u^{\varepsilon}(t)\|^2_{H^1}dt.
\]
The following result is an estimation of the probability that the solution of (\ref{eqq-5}) leaves an
energy ball.
\begin{lemma}\label{lemm-1}
 For any initial value $f\in L^p(\mathbb{T}^d)$ for all $p\in [1,\infty)$, under Hypothesis H1, it holds true that
\begin{eqnarray}\label{eqq-7}
\lim_{M\rightarrow \infty}\sup_{0<\varepsilon \leq 1}\varepsilon \log P\Big((|u^{\varepsilon}|^H_{H^1}(1))^2>M\Big)=-\infty.
\end{eqnarray}
\end{lemma}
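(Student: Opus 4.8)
The plan is to establish the quantitative estimate
\[
P\Big(\big(|u^{\varepsilon}|^H_{H^1}(1)\big)^2>M\Big)\le\exp\Big(-\frac{\big(\log(M/\bar C_f)\big)^2}{8C\varepsilon}\Big),\qquad \varepsilon\in(0,1],\ M>\bar C_f,
\]
where $\bar C_f$ depends only on $\|f\|_H$ and the structural constants; taking $\varepsilon\log$ of both sides and letting $M\to\infty$ then gives (\ref{eqq-7}). First I would apply the It\^{o} formula to $\|u^{\varepsilon}(t)\|^2_H$ exactly as in the proof of Lemma~\ref{lem-1}, now carrying the factor $\varepsilon$ in front of the drift and $\sqrt{\varepsilon}$ in front of the noise coming from (\ref{eqq-5}). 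Using $A\ge\varrho I$, the growth $\|B(u)\|_H\le C(1+\|u\|_H)$ (valid since $B\in C^1_{lip}$), Young's inequality to absorb the convective term into half of the dissipation, and (\ref{equa-3-1-1}), one gets, with $W(t):=\|u^{\varepsilon}(t)\|^2_H+\varepsilon\varrho\int_0^t\|\nabla u^{\varepsilon}(s)\|^2_H\,ds$,
\[
W(t)\le\|f\|^2_H+\varepsilon C\int_0^t\big(1+\|u^{\varepsilon}(s)\|^2_H\big)ds+N_t,\qquad N_t:=2\sqrt{\varepsilon}\int_0^t\langle u^{\varepsilon}(s),\sigma(\varepsilon s,u^{\varepsilon}(s))dW(s)\rangle,
\]
and, by Cauchy--Schwarz and (\ref{equa-3-1-1}), $d\langle N\rangle_t\le 4\varepsilon\|u^{\varepsilon}(t)\|^2_H\|\sigma(\varepsilon t,u^{\varepsilon}(t))\|^2_{\mathcal{L}_2(U,H)}dt\le 4C\varepsilon\|u^{\varepsilon}(t)\|^2_H(1+\|u^{\varepsilon}(t)\|^2_H)dt$. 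Since $\|u^{\varepsilon}\|^2_H\le W$, the nonnegative continuous semimartingale $W$ obeys $W(0)=\|f\|^2_H$, $dW(t)\le\varepsilon C(1+W(t))dt+dN_t$ and $d\langle W\rangle_t=d\langle N\rangle_t\le 4C\varepsilon\,W(t)(1+W(t))dt$.

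The heart of the argument is to pass to the logarithm. Applying the real-valued It\^{o} formula to $\Phi(t):=\log(1+W(t))$ and dropping the nonpositive second-order term, I get $d\Phi(t)\le dW(t)/(1+W(t))\le\varepsilon C\,dt+dM^{\Phi}_t$ with $M^{\Phi}_t:=\int_0^t(1+W(s))^{-1}dN_s$; crucially, the dependence on the size of $u^{\varepsilon}$ then disappears from the bracket,
\[
d\langle M^{\Phi}\rangle_t=\frac{d\langle N\rangle_t}{(1+W(t))^2}\le\frac{4C\varepsilon\,W(t)(1+W(t))}{(1+W(t))^2}dt\le 4C\varepsilon\,dt,
\]
so $\langle M^{\Phi}\rangle_1\le 4C\varepsilon$ \emph{deterministically}. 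Integrating the inequality for $d\Phi$ and using $\varepsilon\le1$ yields $\sup_{0\le t\le1}\Phi(t)\le\log(1+\|f\|^2_H)+C+\sup_{0\le t\le1}M^{\Phi}_t$, i.e.\ $\sup_t W(t)\le(1+\|f\|^2_H)e^{C}\exp(\sup_t M^{\Phi}_t)$. Finally, straight from the definition of $W$ and the convention $\|v\|^2_{H^1}=\|v\|^2_H+\|\nabla v\|^2_H$ one has $\sup_t\|u^{\varepsilon}(t)\|^2_H\le\sup_t W(t)$ and $\varepsilon\varrho\int_0^1\|u^{\varepsilon}\|^2_{H^1}dt\le(1+\varrho)\sup_t W(t)$, hence $(|u^{\varepsilon}|^H_{H^1}(1))^2\le(2+\varrho)\sup_t W(t)\le\bar C_f\exp(\sup_{0\le t\le1}M^{\Phi}_t)$ with $\bar C_f:=(2+\varrho)(1+\|f\|^2_H)e^{C}$.

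It then remains to bound $P(\sup_{0\le t\le1}M^{\Phi}_t>a)$ for $a=\log(M/\bar C_f)>0$. Since $M^{\Phi}$ is a continuous local martingale with $M^{\Phi}_0=0$ and $\langle M^{\Phi}\rangle_t\le 4C\varepsilon$ for all $t\in[0,1]$ almost surely, applying Doob's maximal inequality to the nonnegative supermartingale $\exp(\theta M^{\Phi}_t-\frac{\theta^2}{2}\langle M^{\Phi}\rangle_t)$ (stopped at the first time $M^{\Phi}$ hits $a$) and optimizing over $\theta>0$ gives $P(\sup_{0\le t\le1}M^{\Phi}_t>a)\le\exp(-a^2/(8C\varepsilon))$. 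This is exactly the estimate announced at the start, and (\ref{eqq-7}) follows.

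The step I expect to be the main obstacle is the logarithmic transform. The naive approach — stopping $W$ at the level $M$ and applying an exponential martingale inequality — only controls $\langle N\rangle$ by a quantity of order $\varepsilon M^2$ on $\{\sup_t W(t)\le M\}$ (the factor $\|u^{\varepsilon}\|^2_H$ in $d\langle N\rangle_t$ is of order $M$ there, integrated against $1+\|u^{\varepsilon}\|^2_H$, again of order $M$), while one needs $N$ to reach level of order $M$; this yields only $P(\cdots>M)\le\exp(-c/\varepsilon)$ with \emph{no} gain in $M$ and hence does not produce $-\infty$ in (\ref{eqq-7}). Replacing $W$ by $\log(1+W)$ renormalizes the martingale so that its bracket is bounded by $4C\varepsilon\,dt$ regardless of $\|u^{\varepsilon}\|_H$, turning the estimate into a tail $\exp(-c(\log M)^2/\varepsilon)$ that decays super-polynomially in $M$ uniformly in $\varepsilon\in(0,1]$, which is what (\ref{eqq-7}) demands.
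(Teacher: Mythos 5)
Your proof is correct, but it follows a genuinely different route from the paper's. The paper proceeds by the high-moment method: it keeps the quantity $(|u^{\varepsilon}|^H_{H^1}(t))^2$ itself, estimates its $2p$-th moment via the Barlow--Yor/Davis inequality (\ref{eqqq-2}) applied to the stochastic integral $I_3$, closes the estimate with Gronwall, and then takes $p=1/\varepsilon$ so that the factor $\varepsilon p$ in the martingale term stays of order one; Chebyshev then yields $\sup_{0<\varepsilon\le1}\varepsilon\log P(\cdots>M)\le-\log M+K$ with $K$ depending only on $\|f\|_H$ and the structural constants, which suffices. You instead perform a pathwise logarithmic change of variable $W\mapsto\log(1+W)$ so that the bracket of the renormalized martingale is bounded by $4C\varepsilon$ deterministically, and conclude with the exponential (Bernstein-type) supermartingale inequality; all your intermediate steps (the differential inequality for $W$, the bracket computation, the comparison $(|u^{\varepsilon}|^H_{H^1}(1))^2\le(2+\varrho)\sup_tW$, and the optimization over $\theta$) check out. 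Your approach buys a quantitatively stronger tail, $\varepsilon\log P(\cdots>M)\le-c(\log M)^2$ versus the paper's $-\log M+K$, and avoids Gronwall on moments entirely; the paper's approach has the advantage that the identical $p=1/\varepsilon$ (or $p=2/\varepsilon$) moment scheme is recycled verbatim in Lemmas \ref{lemm-7}--\ref{lemm-6} and Section \ref{section-2}, where the role your logarithm plays is instead played by stopping times and exponential weights such as $e^{-k\varepsilon\int_0^t\|u^{r,\varepsilon}\|^2_{H^1}ds}$. One small remark: the ``naive approach'' you dismiss at the end (stopping $W$ at level $M$) is not what the paper does, so your diagnosis of the obstacle, while a correct observation about that third route, does not apply to the published argument.
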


\begin{proof}
Applying It\^{o} formula, we get
\begin{eqnarray*}
\|u^{\varepsilon}(t)\|^2_H&=&\|f\|^2_H-2\varepsilon \int^t_0\langle u^{\varepsilon}(s), div (B(u^{\varepsilon}(s)))\rangle ds\\
&&\ +2\varepsilon \int^t_0\langle u^{\varepsilon}(s), div (A(u^{\varepsilon}(s))\nabla u^{\varepsilon}(s))\rangle ds+2\sqrt{\varepsilon}\int^t_0 \langle u^{\varepsilon}(s), \sigma(\varepsilon s, u^{\varepsilon}(s))dW(s)\rangle\\
&&\ +\varepsilon \int^t_0 \|\sigma(\varepsilon s, u^{\varepsilon}(s))\|^2_{\mathcal{L}_2(U,H)}ds\\
&:=& \|f\|^2_H+I_1(t)+I_2(t)+I_3(t)+I_4(t).
\end{eqnarray*}
By integration by parts, the Lipschitz property of $B$, the Cauchy-Schwarz inequality and Young's inequality, we get
\begin{eqnarray*}
I_1(t)&=&2\varepsilon \int^t_0\langle \nabla u^{\varepsilon}(s), B(u^{\varepsilon}(s))\rangle ds\\
&\leq & 2\varepsilon C\int^t_0 \|u^{\varepsilon}(s)\|_{H^1}(1+\|u^{\varepsilon}(s)\|_{H})ds\\
&\leq & \varepsilon \varrho \int^t_0 \|u^{\varepsilon}(s)\|^2_{H^1}ds+\frac{\varepsilon C}{\varrho}\int^t_0 \big(1+\|u^{\varepsilon}(s)\|^2_{H}\big)ds,
\end{eqnarray*}
where $\varrho$ is the positive constant appeared in Hypothesis H1.

By integration by parts and (ii) of Hypothesis H1,
\begin{eqnarray*}
I_2(t)&=&-2\varepsilon \int^t_0\langle \nabla u^{\varepsilon}(s), A(u^{\varepsilon}(s))\nabla u^{\varepsilon}(s) \rangle ds\\
&\leq & -2\varepsilon \varrho \int^t_0 \|u^{\varepsilon}(s)\|^2_{H^1}ds.
\end{eqnarray*}
With the help of (\ref{equa-3-1-1}), it follows that
\begin{eqnarray*}
I_4(t)\leq \varepsilon C\int^t_0(1+ \|u^{\varepsilon}(s)\|^2_{H})ds.
\end{eqnarray*}
Collecting all the above estimates, we deduce that
\begin{eqnarray*}
&&\|u^{\varepsilon}(t)\|^2_H+\varepsilon \varrho \int^t_0 \|u^{\varepsilon}(s)\|^2_{H^1}ds\\
&\leq& \|f\|^2_H+\varepsilon C t+\frac{\varepsilon C}{\varrho}t+\Big(\varepsilon C+\frac{\varepsilon C}{\varrho}\Big)\int^t_0 \|u^{\varepsilon}(s)\|^2_{H}ds+|I_3(t)|.
\end{eqnarray*}
Then, it gives that
\begin{eqnarray*}
(|u^{\varepsilon}|^H_{H^1}(1))^2
\leq\|f\|^2_H+\varepsilon C +\frac{\varepsilon C}{\varrho}+\Big(\varepsilon C+\frac{\varepsilon C}{\varrho}\Big)\int^1_0 (|u^{\varepsilon}|^H_{H^1}(s))^2ds+\sup_{t\in [0,1]}|I_3(t)|.
\end{eqnarray*}
Hence, for $p\geq 2$, we have
\begin{eqnarray}\notag
\Big(E(|u^{\varepsilon}|^H_{H^1}(1))^{2p}\Big)^{\frac{1}{p}}
&\leq& \Big(\|f\|^2_H+\varepsilon C +\frac{\varepsilon C}{\varrho}\Big)+\Big(\varepsilon C+\frac{\varepsilon C}{\varrho}\Big)\Big(E\big(\int^1_0 (|u^{\varepsilon}|^H_{H^1}(s))^2ds\big)^p\Big)^{\frac{1}{p}}\\
\label{eqqq-1}
&&\ +\Big(E\big(\sup_{t\in [0,1]}|I_3(t)|^p\big)\Big)^{\frac{1}{p}}.
\end{eqnarray}
To estimate the stochastic integral term in (\ref{eqqq-1}), we will use the following remarkable result from \cite{B-Y} and \cite{Davis} that there exists a universal constant $C$ such that, for any $p\geq 2$ and for any continuous martingale $M_t$ with $M_0=0$, one has
\begin{eqnarray}\label{eqqq-2}
\Big(E(|M^*_t|^p)\Big)^{\frac{1}{p}}\leq Cp^{\frac{1}{2}}\Big(E\langle M\rangle^{\frac{p}{2}}_t\Big)^{\frac{1}{p}},
\end{eqnarray}
where $M^*_t=\sup_{s\in [0,t]}|M_s|$.

Using (\ref{eqqq-2}), we get
\begin{eqnarray}\notag
&&\Big(E(\sup_{t\in [0,1]}|I_3(t)|^p)\Big)^{\frac{1}{p}}\\ \notag
&=& 2\sqrt{\varepsilon}\Big(E\Big(\sup_{0\leq t\leq 1}\int^t_0 \langle u^{\varepsilon}(s), \sigma(\varepsilon s, u^{\varepsilon}(s))dW(s)\rangle\Big)^p\Big)^{\frac{1}{p}}\\ \notag
&\leq& 2C\sqrt{p \varepsilon}\Big(E\Big(\int^1_0 \|u^{\varepsilon}(s)\|^2_H\|\sigma(\varepsilon s, u^{\varepsilon}(s))\|^2_{\mathcal{L}_2(U,H)}ds\Big)^{\frac{p}{2}}\Big)^{\frac{1}{p}}\\ \notag
&\leq& 2C\sqrt{p \varepsilon}\Big(E\Big(\int^1_0 \|u^{\varepsilon}(s)\|^2_H(1+\|u^{\varepsilon}(s)\|^2_H)ds\Big)^{\frac{p}{2}}\Big)^{\frac{1}{p}}\\
\notag
&\leq& 2C\sqrt{p \varepsilon}\Big[\Big(E\Big(\int^1_0 \big(1+\|u^{\varepsilon}(s)\|^2_H\big)^2ds\Big)^{\frac{p}{2}}\Big)^{\frac{2}{p}}\Big]^{\frac{1}{2}}\\
\notag
&\leq& 2C\sqrt{p \varepsilon}\Big[\Big(E(\int^1_0 (1+\|u^{\varepsilon}(s)\|^4_H)ds)^{\frac{p}{2}}\Big)^{\frac{2}{p}}\Big]^{\frac{1}{2}}\\
\label{eqqq-3}
&\leq& 2C\sqrt{p \varepsilon}\Big[\int^1_0 1+(E\|u^{\varepsilon}(s)\|^{2p}_H)^{\frac{2}{p}}ds\Big]^{\frac{1}{2}},
\end{eqnarray}
where $C$ is a constant which may change from line to line. On the other hand,
\begin{eqnarray}\label{eqqq-4}
2\varepsilon C \left(E\Big(\int^1_0\Big(|u^{\varepsilon}|^H_{H^1}(s)\Big)^2ds\Big)^p\right)^{\frac{1}{p}}\leq 2\varepsilon C \int^1_0 \Big(E\Big(|u^{\varepsilon}|^H_{H^1}(s)\Big)^{2p}\Big)^{\frac{1}{p}}ds.
\end{eqnarray}
Combining (\ref{eqqq-1}), (\ref{eqqq-3}) and (\ref{eqqq-4}), we arrive at
\begin{eqnarray}\notag
\Big(E\Big(|u^{\varepsilon}|^H_{H^1}(1)\Big)^{2p}\Big)^{\frac{2}{p}}
&\leq& 4\Big(\|f\|^2_H+\varepsilon C +\frac{\varepsilon C}{\varrho}\Big)^2+4\Big(\varepsilon C+\frac{\varepsilon C}{\varrho}\Big)\int^1_0\Big(E \Big(|u^{\varepsilon}|^H_{H^1}(s)\Big)^{2p}\Big)^{\frac{2}{p}}ds\\
\label{eqqq-5}
&&\ +16 C^2 \varepsilon p+16 C^2 \varepsilon p\int^1_0\Big(E \Big(|u^{\varepsilon}|^H_{H^1}(s)\Big)^{2p}\Big)^{\frac{2}{p}}ds.
\end{eqnarray}
Applying Gronwall inequality to (\ref{eqqq-5}), it yields
\begin{eqnarray}\label{eqqq-6}
\Big(E(|u^{\varepsilon}|^H_{H^1}(1))^{2p}\Big)^{\frac{2}{p}}
\leq \Big[4\Big(\|f\|^2_H+\varepsilon C +\frac{\varepsilon C}{\varrho}\Big)^2+16 C^2 \varepsilon p\Big]\cdot \exp\left\{4\Big(\varepsilon C+\frac{\varepsilon C}{\varrho}\Big)+16 C^2 \varepsilon p\right\}.
\end{eqnarray}
Taking $p=\frac{1}{\varepsilon}$ in (\ref{eqqq-6}) and using Chebyshev inequality, it follows that
\begin{eqnarray*}
&&\varepsilon \log P\Big((|u^{\varepsilon}|^H_{H^1}(1))^{2}>M\Big)\\
&\leq& -\log M+\log\Big(E\big(|u^{\varepsilon}|^H_{H^1}(1)\big)^{2p}\Big)^{\frac{1}{p}}\\
&\leq&  -\log M+ \log\sqrt{\Big[4\Big(\|f\|^2_H+\varepsilon C +\frac{\varepsilon C}{\varrho}\Big)^2+16 C^2 \varepsilon p\Big]}+2(\varepsilon C+\frac{\varepsilon C}{\varrho})+8 C^2 \varepsilon p.
\end{eqnarray*}
Thus,
\begin{eqnarray*}
&&\sup_{0<\varepsilon \leq 1}\varepsilon \log P\Big((|u^{\varepsilon}|^H_{H^1}(1))^{2}>M\Big)\\
&\leq&  -\log M+ \log\sqrt{\Big[4\Big(\|f\|^2_H+ C +\frac{ C}{\varrho}\Big)^2+16 C^2p\Big]}+2\Big( C+\frac{ C}{\varrho}\Big)+8 C^2 p.
\end{eqnarray*}
Letting $M\rightarrow \infty$ on both side of the above inequality, we obtain the desired result.

\end{proof}
Since $H^1$ is dense in $H$, there exits a sequence $\{f_n\}^{\infty}_{n=1}\subset H^1$ such that
\[
\lim_{n\rightarrow \infty}\|f_n-f\|_H=0.
\]
Let $u^{\varepsilon}_n$ be the solution of (\ref{eqq-5}) with the initial value $f_n$. From the proof process of Lemma \ref{lemm-1}, it is easily to deduce that
\begin{eqnarray}\label{e-4}
\lim_{M\rightarrow \infty}\sup_n\sup_{0<\varepsilon\leq 1}\varepsilon \log P\Big((|u^{\varepsilon}_n|^H_{H^1}(1))^2>M\Big)=-\infty.
\end{eqnarray}
Let $v^{\varepsilon}_n$ be the solution of (\ref{eq-7}) with the initial value $f_n$. We have the following result.

\begin{lemma}\label{lemm-7}
Under Hypotheses H1 and H2, for any $n\in \mathbb{Z}^+$, we have
\begin{eqnarray}\label{e-3}
\lim_{M\rightarrow \infty}\sup_{0<\varepsilon \leq 1}\varepsilon \log P\Big(\sup_{0\leq t\leq 1}\|v^{\varepsilon}_n(t)\|^2_{H^1}>M\Big)=-\infty.
\end{eqnarray}
\end{lemma}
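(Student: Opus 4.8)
The plan is to run an $H^1$-energy estimate on the linear stochastic equation \eqref{eq-7} defining $v^{\varepsilon}_n$, exactly parallel to the $H$-estimate carried out in Lemma \ref{lemm-1}, but now using Hypothesis H2 instead of \eqref{equa-3-1-1}. First I would apply the It\^{o} formula to $\|v^{\varepsilon}_n(t)\|^2_{H^1}$; since \eqref{eq-7} has no drift, we obtain
\begin{eqnarray*}
\|v^{\varepsilon}_n(t)\|^2_{H^1}=\|f_n\|^2_{H^1}+2\sqrt{\varepsilon}\int^t_0\big((v^{\varepsilon}_n(s),\sigma(\varepsilon s,v^{\varepsilon}_n(s))dW(s))\big)_{H^1}+\varepsilon\int^t_0\|\sigma(\varepsilon s,v^{\varepsilon}_n(s))\|^2_{\mathcal{L}_2(U,H^1)}ds.
\end{eqnarray*}
By \eqref{e-1} the trace term is bounded by $\varepsilon C\int^t_0(1+\|v^{\varepsilon}_n(s)\|^2_{H^1})ds$. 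Taking $\sup_{t\in[0,1]}$ and then $2p$-th moments (for $p\geq 2$) gives, after using Minkowski's integral inequality on the drift-type term,
\begin{eqnarray*}
\Big(E\sup_{0\leq t\leq 1}\|v^{\varepsilon}_n(t)\|^{2p}_{H^1}\Big)^{1/p}\leq C\|f_n\|^2_{H^1}+\varepsilon C+\varepsilon C\int^1_0\Big(E\sup_{0\leq l\leq s}\|v^{\varepsilon}_n(l)\|^{2p}_{H^1}\Big)^{1/p}ds+\Big(E\sup_{0\leq t\leq 1}|J(t)|^p\Big)^{1/p},
\end{eqnarray*}
where $J(t)=2\sqrt{\varepsilon}\int^t_0((v^{\varepsilon}_n(s),\sigma(\varepsilon s,v^{\varepsilon}_n(s))dW(s)))_{H^1}$.

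Next I would handle the martingale term with the dimension-free Burkholder inequality \eqref{eqqq-2}: the quadratic variation of $J$ is $4\varepsilon\int^t_0\|v^{\varepsilon}_n(s)\|^2_{H^1}\|\sigma(\varepsilon s,v^{\varepsilon}_n(s))\|^2_{\mathcal{L}_2(U,H^1)}ds\leq C\varepsilon\int^t_0\|v^{\varepsilon}_n(s)\|^2_{H^1}(1+\|v^{\varepsilon}_n(s)\|^2_{H^1})ds$ by \eqref{e-1}, so
\begin{eqnarray*}
\Big(E\sup_{0\leq t\leq 1}|J(t)|^p\Big)^{1/p}\leq C\sqrt{p\varepsilon}\Big(E\Big(\int^1_0(1+\|v^{\varepsilon}_n(s)\|^2_{H^1})^2ds\Big)^{p/2}\Big)^{1/p}\leq C\sqrt{p\varepsilon}\Big(\int^1_0\big(1+(E\sup_{0\leq l\leq s}\|v^{\varepsilon}_n(l)\|^{2p}_{H^1})^{2/p}\big)ds\Big)^{1/2},
\end{eqnarray*}
mimicking \eqref{eqqq-3}. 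Squaring the resulting inequality, absorbing cross terms by Young's inequality, and applying Gronwall's inequality yields a bound of the form
\begin{eqnarray*}
\Big(E\sup_{0\leq t\leq 1}\|v^{\varepsilon}_n(t)\|^{2p}_{H^1}\Big)^{2/p}\leq\big[C(\|f_n\|^2_{H^1}+\varepsilon)^2+C\varepsilon p\big]\exp\{C\varepsilon+C\varepsilon p\},
\end{eqnarray*}
which is the $H^1$-analogue of \eqref{eqqq-6}. Note $\|f_n\|_{H^1}<\infty$ is exactly where the fixed $n$ (and the fact $f_n\in H^1$) is used.

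Finally, I would set $p=1/\varepsilon$ and use Chebyshev's inequality precisely as at the end of Lemma \ref{lemm-1}: this gives
\begin{eqnarray*}
\varepsilon\log P\Big(\sup_{0\leq t\leq 1}\|v^{\varepsilon}_n(t)\|^2_{H^1}>M\Big)\leq-\log M+\tfrac12\log\big[C(\|f_n\|^2_{H^1}+\varepsilon)^2+C\big]+C\varepsilon+C,
\end{eqnarray*}
and taking $\sup_{0<\varepsilon\leq 1}$ then letting $M\to\infty$ gives \eqref{e-3}. I do not expect any serious obstacle here; the only points requiring a little care are verifying that It\^{o}'s formula is applicable in the $H^1$-norm for the mild/variational solution of \eqref{eq-7} (this is standard once Hypothesis H2 guarantees $\sigma$ takes values in $\mathcal{L}_2(U,H^1)$ and one has the requisite $H^1$-regularity of $v^{\varepsilon}_n$, which follows from a Galerkin or fixed-point construction under H2), and making sure the constants in the Gronwall step are independent of $\varepsilon$ so that the final $\sup_{0<\varepsilon\leq1}$ is harmless. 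The structural parallel with Lemma \ref{lemm-1} makes the bookkeeping routine.
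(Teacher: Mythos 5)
Your proposal follows essentially the same route as the paper's proof: It\^{o}'s formula applied to $\|v^{\varepsilon}_n(t)\|^2_{H^1}$ (noting the absence of a drift term), Hypothesis H2 to control the correction and quadratic variation terms, the dimension-free Burkholder-type inequality (\ref{eqqq-2}) for the martingale part, Gronwall to obtain a $p$-moment bound with explicit linear dependence on $\varepsilon p$ in the exponent, and finally $p\sim 1/\varepsilon$ with Chebyshev as in Lemma \ref{lemm-1}. The argument is correct and the resulting bound matches the paper's estimate in all essential respects.
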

\begin{proof}
Applying It\^{o} formula to $\|v^{\varepsilon}_n(t)\|^2_{H^1}$, one obtains
\begin{eqnarray*}
\|v^{\varepsilon}_n(t)\|^2_{H^1}
=\|f_n\|^2_{H^1}+2\sqrt{\varepsilon}\int^t_0\Big(\Big(v^{\varepsilon}_n(s), \sigma(\varepsilon s, v^{\varepsilon}_n(s))dW(s)\Big)\Big)
+\varepsilon \int^t_0\|\sigma(\varepsilon s, v^{\varepsilon}_n(s))\|^2_{\mathcal{L}_2(U,H^1)}ds.
\end{eqnarray*}
By Hypothesis H2 and (\ref{eqqq-2}), we deduce that
\begin{eqnarray*}
\Big(E\Big[\sup_{0\leq t\leq r}\|v^{\varepsilon}_n(t)\|^{2p}_{H^1}\Big]\Big)^{\frac{2}{p}}
&\leq& 2\|f_n\|^4_{H^1}+8C\varepsilon p\Big(E\Big(\int^r_0\|v^{\varepsilon}_n(t)\|^{2}_{H^1}\|\sigma(\varepsilon s, v^{\varepsilon}_n(s))\|^2_{\mathcal{L}_2(U,H^1)}ds\Big)^{\frac{p}{2}}\Big)^{\frac{2}{p}}\\
&&\ +4\varepsilon^2 Cr \Big(r+\int^r_0 \Big(E\Big[\sup_{0\leq l\leq s}\|v^{\varepsilon}_n(l)\|^{2p}_{H^1}\Big]\Big)^{\frac{2}{p}}ds\Big)\\
&\leq& 2\|f_n\|^4_{H^1}+16C\varepsilon p\Big(r+\int^r_0\Big(E\Big[\sup_{0\leq l\leq s}\|v^{\varepsilon}_n(l)\|^{2p}_{H^1}\Big]\Big)^{\frac{2}{p}}ds\Big)\\
&&\ +4\varepsilon^2 Cr \Big(r+\int^r_0 \Big(E[\sup_{0\leq l\leq s}\|v^{\varepsilon}_n(l)\|^{2p}_{H^1}]\Big)^{\frac{2}{p}}ds\Big).
\end{eqnarray*}
By Gronwall inequality, we get
\begin{eqnarray*}
\Big(E\Big[\sup_{0\leq t\leq 1}\|v^{\varepsilon}_n(t)\|^{2p}_{H^1}\Big]\Big)^{\frac{2}{p}}\leq
\Big(2\|f_n\|^4_{H^1}+16C\varepsilon p+4\varepsilon^2 C^2\Big)e^{16C\varepsilon p+4\varepsilon^2 C^2}.
\end{eqnarray*}
The rest of the proof is the same as Lemma \ref{lemm-1}, we omit it.
\end{proof}
\begin{remark}
To obtain the estimation of $\|v^{\varepsilon}_n(t)\|^2_{H^1}$, the Hypothesis H2 is necessary, since there is no Stokes operator in (\ref{eq-7}).
\end{remark}
\section{Proof of (\ref{eq-8}) under a stronger condition  }\label{section-1}
As stated in Theorem \ref{thm-1}, we firstly  prove  (\ref{eq-8}) holds under a stronger conditions on the initial value $f$. The proof of (\ref{eq-8}) is quite involved, because the coefficients of (\ref{eqq-5}) are neither monotone nor locally monotone. Inspired by \cite{H-Z}, we introduce the heat kernel to smooth the operator $A$.

Let $P_r, r>0$ denote the semigroup on $H$ generated by the Laplacian on $\mathbb{T}^d$. Recall that
\[
P_r g(x)=\int_{\mathbb{T}^d}P_r(x,z)g(z)dz,
\]
where $P_r(x,z)$ stands for the heat kernel, $x, z\in\mathbb{T}^d$.

%For $\eta>0$, denote by $C^{\eta}(\mathbb{T}^d)$ the space of functions that are $\eta-$H\"{o}lder continuous.
Referring to (4.3) in \cite{H-Z}, the following property
\begin{eqnarray}\label{eqqq-13}
\|P_r g\|_{L^{\infty}}\leq C_r \|g\|_H,\quad g\in H,
\end{eqnarray}
is valid.
%\begin{eqnarray}\notag
%&&|\int_{\mathbb{T}^d}(P_{r_1}(x,z)-P_{r_2}(x,z))g(z)dz|\\
%\label{eqqq-14}
%&\leq & C\|g\|_{C^{\eta}(\mathbb{T}^d)}(r_1-r_2)^{\alpha_{\eta}},\ g\in C^{\eta}(\mathbb{T}^d),\ {\rm{for\ some\ constant}}\ \alpha_{\eta}>0.
%\end{eqnarray}
For $r>0$, $u\in H$, set
\begin{eqnarray}\label{equ-06}
A_r(u)(x)=P_r(A(u))(x), \ x\in\mathbb{T}^d,
\end{eqnarray}
where
\[
P_r(A(u))(x)=(P_r(A_{ij}(u))(x))^d_{i,j=1}.
\]
Note that there exists a constant $C$, independent of $r$,  such that
\begin{eqnarray}\label{equ-07}
\varrho |\xi|^2\leq A_r(u)(x)\xi\cdot\xi\leq C|\xi|^2\quad \forall r>0,\  u\in H,\ x\in\mathbb{T}^d, \ \xi\in\mathbb{R}^d.
\end{eqnarray}

Employing the operator $P_r$, the equation (\ref{eqq-5}) is changed to be
 \begin{eqnarray}\label{eqq-2}
\left\{
  \begin{array}{ll}
   du^{r,\varepsilon}+\varepsilon div(B(u^{r,\varepsilon}))dt=\varepsilon div(A_r(u^{r,\varepsilon})\nabla u^{r,\varepsilon})dt+\sqrt{\varepsilon}\sigma(\varepsilon t,u^{r,\varepsilon}(t))dW(t),
  & \ x\in \mathbb{T}^d,\  t\in [0,1] \\
   u^{r,\varepsilon}(0)=f. &
  \end{array}
\right.
\end{eqnarray}
Applying similar arguments as Lemma \ref{lemm-1}, we deduce that
\begin{lemma}\label{lemm-2}
  For any initial value $f\in L^p(\mathbb{T}^d)$ for all $p\in [1,\infty)$, under Hypothesis H1, we have that for any $r>0$,
\begin{eqnarray}\label{eqqq-10}
\lim_{M\rightarrow \infty}\sup_{0<\varepsilon \leq 1}\varepsilon \log P\Big((|u^{r,\varepsilon}|^H_{H^1}(1))^2>M\Big)=-\infty.
\end{eqnarray}
\end{lemma}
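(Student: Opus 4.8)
The plan is to follow closely the argument of Lemma~\ref{lemm-1}, since the only difference between equation~(\ref{eqq-5}) and its smoothed version~(\ref{eqq-2}) is that the diffusion matrix $A(u^{\varepsilon})$ is replaced by $A_r(u^{r,\varepsilon})=P_r(A(u^{r,\varepsilon}))$. The crucial point is that, by~(\ref{equ-07}), the smoothed matrix $A_r$ enjoys exactly the same uniform lower and upper bounds $\varrho I\le A_r\le CI$ as $A$, with the constant $\varrho$ independent of $r$. Hence all the estimates of Lemma~\ref{lemm-1} go through verbatim, and the final bound is uniform in $r>0$ as well as in $\varepsilon$.

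Concretely, I would first apply the It\^o formula to $\|u^{r,\varepsilon}(t)\|^2_H$, obtaining a decomposition $\|u^{r,\varepsilon}(t)\|^2_H=\|f\|^2_H+I_1(t)+I_2(t)+I_3(t)+I_4(t)$ exactly as in the proof of Lemma~\ref{lemm-1}, with $I_1$ the drift term coming from $B$, $I_2=-2\varepsilon\int_0^t\langle\nabla u^{r,\varepsilon}(s),A_r(u^{r,\varepsilon}(s))\nabla u^{r,\varepsilon}(s)\rangle\,ds$, $I_3$ the stochastic integral, and $I_4$ the It\^o correction. The term $I_1$ is estimated by integration by parts, the Lipschitz property of $B$ and Young's inequality, producing $\varepsilon\varrho\int_0^t\|u^{r,\varepsilon}(s)\|^2_{H^1}\,ds$ plus lower-order terms; the term $I_2$ is bounded above by $-2\varepsilon\varrho\int_0^t\|u^{r,\varepsilon}(s)\|^2_{H^1}\,ds$ by virtue of~(\ref{equ-07}); and $I_4$ is controlled by~(\ref{equa-3-1-1}). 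Absorbing one copy of the gradient term, this yields
\begin{eqnarray*}
\big(|u^{r,\varepsilon}|^H_{H^1}(1)\big)^2\le \|f\|^2_H+\varepsilon C+\frac{\varepsilon C}{\varrho}+\Big(\varepsilon C+\frac{\varepsilon C}{\varrho}\Big)\int_0^1\big(|u^{r,\varepsilon}|^H_{H^1}(s)\big)^2\,ds+\sup_{t\in[0,1]}|I_3(t)|,
\end{eqnarray*}
which is formally identical to the corresponding inequality in Lemma~\ref{lemm-1}.

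From here the remainder of the argument is a direct transcription: take $2p$-th moments, use the martingale moment inequality~(\ref{eqqq-2}) together with the linear growth of $\sigma$ from~(\ref{equa-3-1-1}) to control $\big(E\sup_{t\in[0,1]}|I_3(t)|^p\big)^{1/p}$ by $2C\sqrt{p\varepsilon}\big[\int_0^1 1+(E\|u^{r,\varepsilon}(s)\|^{2p}_H)^{2/p}\,ds\big]^{1/2}$, apply Gronwall's inequality to obtain
\begin{eqnarray*}
\Big(E\big(|u^{r,\varepsilon}|^H_{H^1}(1)\big)^{2p}\Big)^{\frac2p}\le\Big[4\Big(\|f\|^2_H+\varepsilon C+\frac{\varepsilon C}{\varrho}\Big)^2+16C^2\varepsilon p\Big]\exp\Big\{4\Big(\varepsilon C+\frac{\varepsilon C}{\varrho}\Big)+16C^2\varepsilon p\Big\},
\end{eqnarray*}
then set $p=1/\varepsilon$ and use Chebyshev's inequality to conclude that $\sup_{0<\varepsilon\le1}\varepsilon\log P\big((|u^{r,\varepsilon}|^H_{H^1}(1))^2>M\big)\le-\log M+O(1)$, so letting $M\to\infty$ gives~(\ref{eqqq-10}). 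I do not expect a genuine obstacle here: the whole point of the smoothing is that $P_r$ preserves the ellipticity constants, so the proof is robust. The only thing worth flagging is that one must invoke~(\ref{equ-07}) rather than Hypothesis~H1(ii) at the step bounding $I_2$, and verify that the resulting constants do not depend on $r$ — which~(\ref{equ-07}) explicitly guarantees. For brevity I would simply note that the argument is identical to that of Lemma~\ref{lemm-1} with $A$ replaced by $A_r$ and~(\ref{equ-07}) used in place of Hypothesis~H1(ii), and omit the repeated computation.
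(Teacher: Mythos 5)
Your proposal is correct and matches the paper, which itself proves this lemma simply by remarking that the argument of Lemma \ref{lemm-1} applies verbatim; your observation that (\ref{equ-07}) supplies the same $r$-independent ellipticity constants $\varrho I\leq A_r\leq CI$ in place of Hypothesis H1(ii) is exactly the point that makes the transcription work uniformly in $r$ and $\varepsilon$.
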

Referring to Theorem 4.4 in \cite{H-Z}, we have
\begin{thm}
Let $f \in  C^{1+l}(\mathbb{T}^d)$ for some $l>0$. Under Hypotheses H1 and H3, it holds true that for all $p\in [2, \infty)$,
\begin{eqnarray}\label{eqq-3}
\sup_{r>0}\sup_{0<\varepsilon\leq 1}{E} \sup_{t\in [0,1]}\|\nabla u^{r,\varepsilon}(t)\|^p_{L^{\infty}(\mathbb{T}^d)}< \infty.
\end{eqnarray}
\end{thm}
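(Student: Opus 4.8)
The plan is to carry over the strategy of Theorem 4.4 in \cite{H-Z} to the small-time scaled equation (\ref{eqq-2}), noting that the $\varepsilon$-prefactors in front of the drift and diffusion terms are harmless (they only improve the estimates) provided we track the constants uniformly in $\varepsilon\in(0,1]$ and in $r>0$. First I would differentiate equation (\ref{eqq-2}) in the space variable, i.e.\ apply $\nabla$ (componentwise, $\partial_{x_k}$) to obtain an equation for $w^{r,\varepsilon}:=\nabla u^{r,\varepsilon}$. Because $A_r(u^{r,\varepsilon})=P_r(A(u^{r,\varepsilon}))$ has been mollified by the heat semigroup, the coefficient $x\mapsto A_r(u^{r,\varepsilon})(x)$ is smooth in $x$ with derivatives controlled via (\ref{eqqq-13}) by $\|u^{r,\varepsilon}\|_H$ alone — this is precisely the point of introducing $P_r$, since it avoids any term involving $\nabla A(u^{r,\varepsilon})=A'(u^{r,\varepsilon})\nabla u^{r,\varepsilon}$ that one cannot close. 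The resulting equation for $w^{r,\varepsilon}$ is a linear (in $w$) parabolic SPDE with bounded measurable principal coefficient $A_r$ satisfying the uniform ellipticity (\ref{equ-07}), lower-order terms whose coefficients depend on $\|u^{r,\varepsilon}\|_{H^1}$ and on the $C^1_{lip}$ bounds for $A$ and $B$, and a noise term governed by Hypothesis H3.

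Second, I would run the $L^p(\Omega;L^\infty_x)$-estimate for $w^{r,\varepsilon}$ through the stochastic maximal regularity / factorization argument in the $2$-smooth Banach space scale $H^{a,b}$: choosing $a<2$ and $b$ large so that the Sobolev embedding $H^{a,b}(\mathbb{T}^d)\hookrightarrow C(\mathbb{T}^d)$ holds (i.e.\ $a-d/b>0$), one represents $w^{r,\varepsilon}(t)$ via the evolution family generated by $\varepsilon\,\mathrm{div}(A_r(u^{r,\varepsilon})\nabla\cdot)$, applies the stochastic convolution estimate in $\zeta(U,H^{a,b})$ using the bound (\ref{eqq-1}) on $\sigma$, and absorbs the nonlinear contributions using the a priori energy bounds. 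The energy inputs are exactly (\ref{eqqq-67})--(\ref{eqqq-58}) of Lemma \ref{lem-1} together with Lemma \ref{lemm-2}: they provide $\sup_{r>0}\sup_{0<\varepsilon\le 1}E\big(\int_0^1\|u^{r,\varepsilon}(t)\|^2_{H^1}dt\big)^q<\infty$ and $\sup_{r,\varepsilon}E\sup_t\|u^{r,\varepsilon}(t)\|^{2q}_H<\infty$ for all $q$, which is what one needs to control the lower-order terms after a Grönwall-type argument in the $H^{a,b}$ norm. Since every constant produced this way depends only on the $C^1_{lip}$ norms of $A,B$, the ellipticity constant $\varrho$, the constants in Hypotheses H1 and H3, and the energy bounds — none of which involve $r$ or $\varepsilon$ — the final bound is uniform in both, and taking $a$ slightly less than $2$ and $b$ large gives the claimed $\sup_{r>0}\sup_{0<\varepsilon\le1}E\sup_{t\in[0,1]}\|\nabla u^{r,\varepsilon}(t)\|^p_{L^\infty}<\infty$ for every $p\in[2,\infty)$; the regularity $f\in C^{1+l}$ guarantees $\nabla f\in C^l\subset H^{a,b}$ so the initial datum contributes a finite constant.

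The main obstacle, and the step requiring genuine care, is the stochastic term in the $w^{r,\varepsilon}$-equation: Hypothesis H3 in the regime $a>1$ gives a bound on $\|\sigma(t,u)\|_{\zeta(U,H^{a,b})}$ involving $\|u\|^a_{H^{1,ab}}$, a norm that is \emph{not} controlled by the $L^2$-energy estimates alone, so one must set up the estimate as a closed nonlinear inequality for the quantity $E\sup_{s\le t}\|w^{r,\varepsilon}(s)\|^p_{H^{a-1,b}}$ (equivalently $\|u^{r,\varepsilon}\|^p_{H^{a,b}}$) and invoke a stopping-time/bootstrap argument exactly as in \cite{H-Z}, starting from $a$ close to $1$ and iterating upward to $a$ close to $2$. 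One must also verify that the $\varepsilon$ in front of the elliptic operator does not degrade the smoothing of the evolution family — it does not, because the heat-kernel estimates for $e^{t\varepsilon\Delta}$-type semigroups scale as functions of $\varepsilon t$ and, on the fixed interval $[0,1]$ with $\varepsilon\le1$, can only be smaller; the compactness of $\mathbb{T}^d$ and the uniform ellipticity (\ref{equ-07}) make all such constants $r$- and $\varepsilon$-independent. Everything else is a routine repetition of the deterministic-coefficient parabolic regularity machinery combined with the Burkholder-type inequality (\ref{eqqq-2}), so I would present it by reduction to Theorem 4.4 of \cite{H-Z} with the remark that the scaling constants are monotone in $\varepsilon$.
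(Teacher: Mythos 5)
Your bottom line --- reduce the statement to Theorem 4.4 of \cite{H-Z} after checking that the scaling factors are harmless --- is exactly what the paper does: the paper offers no proof at all beyond the phrase ``Referring to Theorem 4.4 in \cite{H-Z}''. The cleanest way to make that reduction rigorous, which you do not use, is the time change already implicit in Section 4: by uniqueness in law, $u^{r,\varepsilon}(t)$ coincides in law with $u^{r}(\varepsilon t)$, where $u^{r}$ solves the unscaled $r$-regularized equation on $[0,1]$ (replace $W$ by $\widetilde W(\tau)=\varepsilon^{-1/2}W(\varepsilon\tau)$). Hence $E\sup_{t\in[0,1]}\|\nabla u^{r,\varepsilon}(t)\|^{p}_{L^{\infty}}=E\sup_{t\in[0,\varepsilon]}\|\nabla u^{r}(t)\|^{p}_{L^{\infty}}\leq E\sup_{t\in[0,1]}\|\nabla u^{r}(t)\|^{p}_{L^{\infty}}$, and (\ref{eqq-3}) is an immediate corollary of the unscaled theorem; no re-derivation and no tracking of $\varepsilon$ inside the proof is needed.

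The fleshed-out mechanism you describe, however, contains a step that would fail to deliver the supremum over $r>0$. You propose to differentiate the equation and to control the resulting coefficient $\nabla A_r(u^{r,\varepsilon})=\nabla P_r(A(u^{r,\varepsilon}))$ ``via (\ref{eqqq-13}) by $\|u^{r,\varepsilon}\|_H$ alone'', asserting at the end that none of your constants involve $r$. But the constant $C_r$ in (\ref{eqqq-13}), and a fortiori in any bound of the form $\|\nabla P_r g\|_{L^{\infty}}\leq C_r'\|g\|_H$, blows up as $r\to0$; this is precisely why the paper's Lemma \ref{lemm-3} and Lemma \ref{lemm-5}, which do rely on (\ref{eqqq-13}), are stated for fixed $r$ only, with the limit $r\to0$ taken separately and in a specific order. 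An argument that differentiates the equation and treats $\nabla A_r(u^{r,\varepsilon})$ as a bounded lower-order coefficient therefore produces constants that degenerate as $r\to0$ and cannot yield $\sup_{r>0}$ in (\ref{eqq-3}). The argument of \cite{H-Z} and \cite{DDMH} avoids differentiating the mollified coefficient: it bootstraps $u^{r}$ itself through the $H^{a,b}$ scale starting from low regularity, using only the uniform ellipticity (\ref{equ-07}), Hypothesis H3, and the fact that $P_r$ is a contraction on the relevant function spaces (so $A_r(u^{r})$ inherits, uniformly in $r$, whatever spatial regularity $u^{r}$ already has, through the Lipschitz continuity of $A$); only at the last step does the embedding $H^{a,b}\hookrightarrow C^1$ for $a-d/b>1$ (note: $>1$, not $>0$ as you write) give the gradient bound. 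Your bootstrap ``from $a$ near $1$ to $a$ near $2$'' is the right skeleton, but it must never invoke an $r$-dependent smoothing estimate for $P_r$.
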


Now, we devote to giving the proof of  (\ref{eq-8}) under a stronger condition on the initial value $f$. We split it into several lemmas.
Firstly, we aim to prove the following result. Let $u^{r,\varepsilon}_n$ be the solution of (\ref{eqq-2}) with the initial value $f$ replaced by $f_n$. We claim that
\begin{lemma}\label{lemm-3}
Let the initial value $f\in L^p(\mathbb{T}^d)$ for all $p\in [1,\infty)$. Under Hypothesis H1, we have that
for any $\delta>0$ and $r>0$,
\begin{eqnarray}\label{eqqq-8}
\lim_{n\rightarrow +\infty}\sup_{0<\varepsilon \leq 1}\varepsilon \log P\Big(\sup_{0\leq t\leq 1}\|u^{r,\varepsilon}(t)-u^{r,\varepsilon}_n(t)\|^2_H>\delta\Big)=-\infty.
\end{eqnarray}

\end{lemma}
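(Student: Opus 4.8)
The plan is to estimate the difference $w^{r,\varepsilon}_n := u^{r,\varepsilon} - u^{r,\varepsilon}_n$, which solves a stochastic equation with zero forcing except for the difference in initial data $f - f_n$ and the difference in the coefficients evaluated at the two solutions. First I would apply the It\^o formula to $\|w^{r,\varepsilon}_n(t)\|^2_H$. The drift terms split into a flux part $\varepsilon\langle \nabla w^{r,\varepsilon}_n, B(u^{r,\varepsilon}) - B(u^{r,\varepsilon}_n)\rangle$, a diffusion part $-\varepsilon\langle \nabla w^{r,\varepsilon}_n, A_r(u^{r,\varepsilon})\nabla u^{r,\varepsilon} - A_r(u^{r,\varepsilon}_n)\nabla u^{r,\varepsilon}_n\rangle$, a martingale part, and an It\^o-correction part $\varepsilon\|\sigma(\varepsilon s, u^{r,\varepsilon}) - \sigma(\varepsilon s, u^{r,\varepsilon}_n)\|^2_{\mathcal{L}_2(U,H)}$. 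The key point in handling the diffusion part is to rewrite $A_r(u^{r,\varepsilon})\nabla u^{r,\varepsilon} - A_r(u^{r,\varepsilon}_n)\nabla u^{r,\varepsilon}_n = A_r(u^{r,\varepsilon}_n)\nabla w^{r,\varepsilon}_n + (A_r(u^{r,\varepsilon}) - A_r(u^{r,\varepsilon}_n))\nabla u^{r,\varepsilon}$. The first piece gives the good coercive term $-\varepsilon\varrho\|w^{r,\varepsilon}_n\|^2_{H^1}$ by \eqref{equ-07}; the second piece is where the smoothing by $P_r$ is essential, since $A_r(u^{r,\varepsilon}) - A_r(u^{r,\varepsilon}_n) = P_r(A(u^{r,\varepsilon}) - A(u^{r,\varepsilon}_n))$ and by \eqref{eqqq-13} together with the Lipschitz property of $A$ one bounds $\|A_r(u^{r,\varepsilon}) - A_r(u^{r,\varepsilon}_n)\|_{L^\infty} \leq C_r\|A(u^{r,\varepsilon}) - A(u^{r,\varepsilon}_n)\|_H \leq C_r\|w^{r,\varepsilon}_n\|_H$. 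This turns the troublesome term into $C_r\|w^{r,\varepsilon}_n\|_H\,\|\nabla u^{r,\varepsilon}\|_H\,\|w^{r,\varepsilon}_n\|_{H^1}$, which after Young's inequality is absorbed partly into $\tfrac{\varepsilon\varrho}{2}\|w^{r,\varepsilon}_n\|^2_{H^1}$ and partly into a term of the form $\tfrac{\varepsilon C_r}{\varrho}\|\nabla u^{r,\varepsilon}\|^2_H\|w^{r,\varepsilon}_n\|^2_H$. The flux and It\^o-correction terms are handled routinely by the Lipschitz assumptions in Hypothesis H1, producing a term $\varepsilon C\|w^{r,\varepsilon}_n\|^2_H$.

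After these manipulations one arrives at an inequality of the shape
\begin{eqnarray*}
\|w^{r,\varepsilon}_n(t)\|^2_H + \varepsilon\varrho\int^t_0\|w^{r,\varepsilon}_n(s)\|^2_{H^1}ds \leq \|f - f_n\|^2_H + \varepsilon C_r\int^t_0\big(1 + \|\nabla u^{r,\varepsilon}(s)\|^2_H\big)\|w^{r,\varepsilon}_n(s)\|^2_H ds + |I_3(t)|,
\end{eqnarray*}
where $I_3$ is the martingale term. Because the random coefficient $\varphi^\varepsilon(s) := 1 + \|\nabla u^{r,\varepsilon}(s)\|^2_H$ is not deterministically bounded, a direct Gronwall argument is not available; instead I would proceed on the event $\{(|u^{r,\varepsilon}|^H_{H^1}(1))^2 \leq M\}$, on which $\int^1_0\varphi^\varepsilon(s)ds \leq 1 + M/(\varepsilon\varrho)$, combined with a stochastic Gronwall lemma (or a localization plus exponential martingale estimate). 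Alternatively — and this is the cleaner route — one takes $p$-th moments as in Lemma \ref{lemm-1}: raising to the power $p$, using the Burkholder/Davis inequality \eqref{eqqq-2} on $I_3$ to produce $2C\sqrt{p\varepsilon}\,(E(\int^1_0\|w^{r,\varepsilon}_n\|^2_H\|\sigma(\cdot)-\sigma(\cdot)\|^2_{\mathcal{L}_2}ds)^{p/2})^{1/p}$, bounding $\|\sigma(\varepsilon s,u^{r,\varepsilon})-\sigma(\varepsilon s,u^{r,\varepsilon}_n)\|^2_{\mathcal{L}_2(U,H)}\leq C\|w^{r,\varepsilon}_n\|^2_H$, and then applying a Gronwall argument in which the coefficient $\int^1_0\|\nabla u^{r,\varepsilon}(s)\|^2_H ds$ appears inside the exponential. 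Here I would use the a priori bound \eqref{eqqq-10} to control, via Chebyshev, the exponential moments of that integral, together with Young's inequality to decouple the product of the $H^1$-energy of $u^{r,\varepsilon}$ and the moment of $w^{r,\varepsilon}_n$.

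Putting the pieces together: choosing $p = 1/\varepsilon$ as in Lemma \ref{lemm-1}, one obtains for the complementary small-probability event that, up to the $P\big((|u^{r,\varepsilon}|^H_{H^1}(1))^2 > M\big)$ contribution (which is made arbitrarily negative uniformly in $\varepsilon$ and $n$ by \eqref{eqqq-10} and its $f_n$-analogue, exactly as in \eqref{e-4}), one has
\begin{eqnarray*}
\varepsilon\log P\Big(\sup_{0\leq t\leq 1}\|w^{r,\varepsilon}_n(t)\|^2_H > \delta\Big) \leq -\log\delta + \log\big(\|f-f_n\|^2_H \cdot e^{C_r M/\varrho} + \varepsilon C_{r}\big) + o(1),
\end{eqnarray*}
and letting first $\varepsilon$ play its role, then $n \to \infty$ so that $\|f-f_n\|_H \to 0$, and finally $M \to \infty$, drives the right-hand side to $-\infty$. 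The main obstacle, and the place requiring the most care, is exactly the non-locally-monotone diffusion term $\varepsilon\, div(A_r(\cdot)\nabla\cdot)$: the difference $A_r(u^{r,\varepsilon})-A_r(u^{r,\varepsilon}_n)$ must be transferred off the high-derivative factor $\nabla u^{r,\varepsilon}$ using the $L^\infty$-smoothing \eqref{eqqq-13} of $P_r$, and the resulting random coefficient $\|\nabla u^{r,\varepsilon}\|^2_H$ forces one to run Gronwall on the energy-ball event rather than unconditionally — this interplay between the Gronwall exponential and the large-deviation estimate \eqref{eqqq-10} is the technical heart of the argument. Note that the smoothing constant $C_r$ blows up as $r \to 0$, which is harmless here since $r > 0$ is fixed, but which foreshadows why a separate argument (the $r \to 0$ limit, presumably the next lemma) will be needed.
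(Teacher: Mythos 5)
Your main line of attack coincides with the paper's: It\^{o}'s formula for $\|u^{r,\varepsilon}-u^{r,\varepsilon}_n\|^2_H$, the decomposition $A_r(u^{r,\varepsilon})\nabla u^{r,\varepsilon}-A_r(u^{r,\varepsilon}_n)\nabla u^{r,\varepsilon}_n=A_r(u^{r,\varepsilon}_n)\nabla(u^{r,\varepsilon}-u^{r,\varepsilon}_n)+(A_r(u^{r,\varepsilon})-A_r(u^{r,\varepsilon}_n))\nabla u^{r,\varepsilon}$ combined with \eqref{eqqq-13} and the Lipschitz property of $A$, Young absorption into the coercive term $\varepsilon\varrho\|\cdot\|^2_{H^1}$, the moment scheme with $p\sim 1/\varepsilon$ based on \eqref{eqqq-2}, and localization on the energy ball furnished by Lemma \ref{lemm-2}. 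The paper implements the localization through the stopping time $\tau_{r,\varepsilon,M}$ together with the exponential weight $e^{-k\varepsilon\int_0^t\|u^{r,\varepsilon}(s)\|^2_{H^1}ds}$ with $k>2C_r/\varrho$, which absorbs the random Gronwall coefficient before moments are taken and is removed afterwards at the price of a factor $e^{ckM}$; your first variant (Gronwall up to the stopping time, on which $\varepsilon\int_0^{\cdot}\|u^{r,\varepsilon}\|^2_{H^1}ds\le M/\varrho$) is an equivalent packaging of the same idea.

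Two points need repair. First, your ``cleaner route'' --- controlling, via Chebyshev and \eqref{eqqq-10}, exponential moments of $\int_0^1\|\nabla u^{r,\varepsilon}(s)\|^2_H ds$ inside the Gronwall exponential at moment order $p\sim 1/\varepsilon$ --- does not work: \eqref{eqqq-10} (and its proof) only give $P\big((|u^{r,\varepsilon}|^H_{H^1}(1))^2>M\big)\le e^{-I(M)/\varepsilon}$ with $I(M)\to\infty$ only logarithmically in $M$, which is far too weak to bound $E\exp\big(c\,\varepsilon^{-1}(|u^{r,\varepsilon}|^H_{H^1}(1))^2\big)$; the stopping-time (or weighted-norm) device is not optional. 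Second, your final display carries an additive term $\varepsilon C_r$ inside the logarithm; taken literally, $\sup_{0<\varepsilon\le 1}$ of that bound stays near $\log C_r$ and does not tend to $-\infty$ as $n\to\infty$. In a correct bookkeeping (compare \eqref{eqqq-19}) the only $n$-dependence is the multiplicative factor $\|f-f_n\|^4_H$, all other contributions being constants depending on $M$, $r$, $\delta$; moreover $M$ is not sent to infinity at the end, but is fixed first via \eqref{eqqq-10} so that the off-ball probability is at most $e^{-R/\varepsilon}$ uniformly in $\varepsilon$ and $n$, after which $n\to\infty$ makes the on-ball probability at most $e^{-R/\varepsilon}$ as well; since $R$ is arbitrary, \eqref{eqqq-8} follows.
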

\begin{proof}
For $M>0$, define a stopping time
\[
\tau_{r, \varepsilon,M}:=\inf\Big\{t\geq 0: \|u^{r,\varepsilon}(t)\|^2_H>M, \ \ {\rm{or}}\ \ \varepsilon \varrho \int^t_0\|u^{r,\varepsilon}(s)\|^2_{H^1}ds>M\Big\}.
\]
Clearly,
\begin{eqnarray}\notag
&&P\Big(\sup_{0\leq t\leq 1}\|u^{r,\varepsilon}(t)-u^{r,\varepsilon}_n(t)\|^2_H>\delta, (|u^{r,\varepsilon}|^H_{H^1}(1))^2\leq M\Big)\\ \notag
&\leq& P\Big(\sup_{0\leq t\leq 1}\|u^{r,\varepsilon}(t)-u^{r,\varepsilon}_n(t)\|^2_H>\delta, \tau_{r, \varepsilon, M}\geq 1\Big)\\
\label{eqqq-9}
&\leq& P\Big(\sup_{0\leq t\leq 1\wedge \tau_{r, \varepsilon, M} }\|u^{r,\varepsilon}(t)-u^{r,\varepsilon}_n(t)\|^2_H>\delta\Big).
\end{eqnarray}
Let $k$ be a positive constant will be decided later.
Applying It\^{o} formula to
\[
e^{-k\varepsilon \int^{t\wedge \tau_{r, \varepsilon, M} }_0 \|u^{r,\varepsilon}(s)\|^2_{H^1}ds}\|u^{r,\varepsilon}(t\wedge \tau_{r, \varepsilon, M} )-u^{r,\varepsilon}_n(t\wedge \tau_{r, \varepsilon, M} )\|^2_H,
\]
we get
\begin{eqnarray}\notag
&&e^{-k\varepsilon \int^{t\wedge \tau_{r, \varepsilon, M} }_0 \|u^{r,\varepsilon}(s)\|^2_{H^1}ds}\|u^{r,\varepsilon}(t\wedge \tau_{r, \varepsilon, M} )-u^{r,\varepsilon}_n(t\wedge \tau_{r, \varepsilon, M} )\|^2_H\\ \notag
&=&\|f-f_n\|^2_H-k\varepsilon \int^{t\wedge \tau_{r, \varepsilon, M} }_0e^{-k\varepsilon \int^{s\wedge \tau_{r, \varepsilon, M} }_0 \|u^{r,\varepsilon}(l)\|^2_{H^1}dl}\|u^{r,\varepsilon}(s)\|^2_{H^1}\|u^{r,\varepsilon}(s )-u^{r,\varepsilon}_n(s)\|^2_Hds\\ \notag
&&\ -2\varepsilon \int^{t\wedge \tau_{r, \varepsilon, M} }_0e^{-k\varepsilon \int^{s}_0 \|u^{r,\varepsilon}(l)\|^2_{H^1}dl}\langle u^{r,\varepsilon}(s )-u^{r,\varepsilon}_n(s), div(B(u^{r,\varepsilon}(s))-B(u^{r,\varepsilon}_n(s)))\rangle ds\\ \notag
&&\ +2\varepsilon \int^{t\wedge \tau_{r, \varepsilon, M} }_0e^{-k\varepsilon \int^{s}_0 \|u^{r,\varepsilon}(l)\|^2_{H^1}dl}\langle u^{r,\varepsilon}(s )-u^{r,\varepsilon}_n(s), div(A_r(u^{r,\varepsilon}(s))\nabla u^{r,\varepsilon}(s) -A_r(u^{r,\varepsilon}_n(s))\nabla u^{r,\varepsilon}_n(s))\rangle ds\\ \notag
&&\ +\varepsilon \int^{t\wedge \tau_{r, \varepsilon, M} }_0e^{-k\varepsilon \int^{s}_0 \|u^{r,\varepsilon}(l)\|^2_{H^1}dl}\|\sigma(\varepsilon s, u^{r,\varepsilon}(s))-\sigma(\varepsilon s, u^{r,\varepsilon}_n(s))\|^2_{\mathcal{L}_2(U,H)}ds\\
\notag
&&\ + 2\sqrt{\varepsilon}\int^{t\wedge \tau_{r, \varepsilon, M} }_0e^{-k\varepsilon \int^{s}_0 \|u^{r,\varepsilon}(l)\|^2_{H^1}dl}\langle u^{r,\varepsilon}(s)-u^{r,\varepsilon}_n(s), (\sigma(\varepsilon s, u^{r,\varepsilon}(s))-\sigma(\varepsilon s, u^{r,\varepsilon}_n(s)))dW(s) \rangle\\
\label{eqqq-11}
&:=& \|f-f_n\|^2_H+J_1(t)+J_2(t)+J_3(t)+J_4(t)+J_5(t).
\end{eqnarray}
Utilizing integration by parts, Cauchy-Schwarz inequality and Hypothesis H1, we deduce that
\begin{eqnarray*}
&&-\langle u^{r,\varepsilon}(s )-u^{r,\varepsilon}_n(s), div(B(u^{r,\varepsilon}(s))-B(u^{r,\varepsilon}_n(s)))\rangle\\
&=&\langle \nabla(u^{r,\varepsilon}(s )-u^{r,\varepsilon}_n(s)), B(u^{r,\varepsilon}(s))-B(u^{r,\varepsilon}_n(s))\rangle\\
&\leq& C\|u^{r,\varepsilon}(s)-u^{r,\varepsilon}_n(s)\|_{H^1}\|u^{r,\varepsilon}(s)-u^{r,\varepsilon}_n(s)\|_H\\
&\leq & \frac{\varrho}{2}\|u^{r,\varepsilon}(s)-u^{r,\varepsilon}_n(s)\|^2_{H^1}+\frac{C}{\varrho}\|u^{r,\varepsilon}(s)-u^{r,\varepsilon}_n(s)\|^2_H,
\end{eqnarray*}
which implies that
\begin{eqnarray}\notag
J_2(t)&\leq& \varepsilon  \varrho \int^{t\wedge \tau_{r, \varepsilon, M} }_0e^{-k\varepsilon \int^{s}_0 \|u^{r,\varepsilon}(l)\|^2_{H^1}dl}\|u^{r,\varepsilon}(s)-u^{r,\varepsilon}_n(s)\|^2_{H^1} ds\\
\label{eqqq-12}
&&\ +\frac{\varepsilon C}{\varrho}\int^{t\wedge \tau_{r, \varepsilon, M} }_0e^{-k\varepsilon \int^{s}_0 \|u^{r,\varepsilon}(l)\|^2_{H^1}dl}\|u^{r,\varepsilon}(s)-u^{r,\varepsilon}_n(s)\|^2_Hds.
\end{eqnarray}
By integration by parts, Hypothesis H1, (\ref{eqqq-13}) and Young's inequality, it follows that
\begin{eqnarray*}
&&\langle u^{r,\varepsilon}(s )-u^{r,\varepsilon}_n(s), div(A_r(u^{r,\varepsilon}(s))\nabla u^{r,\varepsilon}(s) -A_r(u^{r,\varepsilon}_n(s))\nabla u^{r,\varepsilon}_n(s))\rangle\\
&=& -\langle \nabla( u^{r,\varepsilon}(s )-u^{r,\varepsilon}_n(s)), A_r(u^{r,\varepsilon}(s))\nabla u^{r,\varepsilon}(s) -A_r(u^{r,\varepsilon}_n(s))\nabla u^{r,\varepsilon}_n(s)\rangle\\
&=& -\langle \nabla( u^{r,\varepsilon}(s )-u^{r,\varepsilon}_n(s)), A_r(u^{r,\varepsilon}_n(s))\nabla (u^{r,\varepsilon}(s)-u^{r,\varepsilon}_n(s)) \rangle\\
&&\ -\langle \nabla( u^{r,\varepsilon}(s )-u^{r,\varepsilon}_n(s)), (A_r(u^{r,\varepsilon}(s))-A_r(u^{r,\varepsilon}_n(s)))\nabla u^{r,\varepsilon}(s) \rangle\\
&\leq& -\varrho \|u^{r,\varepsilon}(s )-u^{r,\varepsilon}_n(s)\|^2_{H^1}+\|A_r(u^{r,\varepsilon}(s))-A_r(u^{r,\varepsilon}_n(s))\|_{L^{\infty}}\|u^{r,\varepsilon}(s )-u^{r,\varepsilon}_n(s)\|_{H^1}\|u^{r,\varepsilon}(s)\|_{H^1}\\
&\leq& -\varrho \|u^{r,\varepsilon}(s )-u^{r,\varepsilon}_n(s)\|^2_{H^1}+C_r\|u^{r,\varepsilon}(s)-u^{r,\varepsilon}_n(s)\|_{H}\|u^{r,\varepsilon}(s )-u^{r,\varepsilon}_n(s)\|_{H^1}\|u^{r,\varepsilon}(s)\|_{H^1}\\
&\leq& -\frac{3\varrho}{4} \|u^{r,\varepsilon}(s )-u^{r,\varepsilon}_n(s)\|^2_{H^1}+\frac{C_r}{\varrho}\|u^{r,\varepsilon}(s)-u^{r,\varepsilon}_n(s)\|^2_{H}\|u^{r,\varepsilon}(s)\|^2_{H^1}.
\end{eqnarray*}
The above inequality yields that
\begin{eqnarray}\notag
J_3(t)&\leq& -\frac{3}{2}\varepsilon  \varrho \int^{t\wedge \tau_{r, \varepsilon, M} }_0e^{-k\varepsilon \int^{s}_0 \|u^{r,\varepsilon}(l)\|^2_{H^1}dl}\|u^{r,\varepsilon}(s)-u^{r,\varepsilon}_n(s)\|^2_{H^1} ds\\
\label{eqqq-15}
&&\ +\frac{2\varepsilon C_r}{\varrho}\int^{t\wedge \tau_{r, \varepsilon, M} }_0e^{-k\varepsilon \int^{s}_0 \|u^{r,\varepsilon}(l)\|^2_{H^1}dl}\|u^{r,\varepsilon}(s)\|^2_{H^1}\|u^{r,\varepsilon}(s)-u^{r,\varepsilon}_n(s)\|^2_Hds.
\end{eqnarray}
By Hypothesis H1, it follows that
\begin{eqnarray}\label{eqqq-16}
J_4(t)\leq C\varepsilon \int^{t\wedge \tau_{r, \varepsilon, M} }_0e^{-k\varepsilon \int^{s}_0 \|u^{r,\varepsilon}(l)\|^2_{H^1}dl}\|u^{r,\varepsilon}(s)- u^{r,\varepsilon}_n(s)\|^2_{H}ds
\end{eqnarray}

Collecting all the above estimates (\ref{eqqq-11})-(\ref{eqqq-16}) and choosing $k>\frac{2C_r}{\varrho}$, we have
\begin{eqnarray}\notag
&&e^{-k\varepsilon \int^{t\wedge \tau_{r, \varepsilon, M} }_0 \|u^{r,\varepsilon}(s)\|^2_{H^1}ds}\|u^{r,\varepsilon}(t\wedge \tau_{r, \varepsilon, M} )-u^{r,\varepsilon}_n(t\wedge \tau_{r, \varepsilon, M} )\|^2_H\\ \notag
&\leq&\|f-f_n\|^2_H+\Big(\frac{\varepsilon C}{\varrho}+C\varepsilon\Big)\int^{t\wedge \tau_{r, \varepsilon, M} }_0e^{-k\varepsilon \int^{s}_0 \|u^{r,\varepsilon}(l)\|^2_{H^1}dl}\|u^{r,\varepsilon}(s)-u^{r,\varepsilon}_n(s)\|^2_Hds+|J_5(t)|.
\end{eqnarray}
With the help of (\ref{eqqq-2}), we get
\begin{eqnarray}\notag
&&\Big(E\Big[\sup_{0\leq s\leq t\wedge \tau_{r,\varepsilon, M}}\Big(e^{-k\varepsilon \int^{s}_0 \|u^{r,\varepsilon}(l)\|^2_{H^1}dl}\|u^{r,\varepsilon}(s )-u^{r,\varepsilon}_n(s )\|^2_H\Big)\Big]^p\Big)^{\frac{2}{p}}\\
\notag
&\leq& 2\|f-f_n\|^4_H+2\Big(\frac{\varepsilon C}{\varrho}+C\varepsilon\Big)^2\int^t_0\Big(E\Big[\Big(\sup_{0\leq l\leq s\wedge \tau_{r,\varepsilon, M}}e^{-k\varepsilon \int^{l}_0 \|u^{r,\varepsilon}(\gamma)\|^2_{H^1}d\gamma}\|u^{r,\varepsilon}(l )-u^{r,\varepsilon}_n(l )\|^2_H\Big)^p\Big]\Big)^{\frac{2}{p}}ds\\
\notag
&&\ +8C\varepsilon p\int^t_0\Big(E\Big[\sup_{0\leq l\leq s\wedge \tau_{r,\varepsilon, M}}\Big(e^{-2k\varepsilon \int^{l}_0 \|u^{r,\varepsilon}(\gamma)\|^2_{H^1}d\gamma}\|u^{r,\varepsilon}(l )-u^{r,\varepsilon}_n(l )\|^4_H\Big)^{\frac{p}{2}}\Big]\Big)^{\frac{2}{p}}ds\\
\notag
&\leq& 2\|f-f_n\|^4_H+2\Big(\frac{\varepsilon C}{\varrho}+C\varepsilon\Big)^2\int^t_0\Big(E\Big[\Big(\sup_{0\leq l\leq s\wedge \tau_{r,\varepsilon, M}}e^{-k\varepsilon \int^{l}_0 \|u^{r,\varepsilon}(\gamma)\|^2_{H^1}d\gamma}\|u^{r,\varepsilon}(l )-u^{r,\varepsilon}_n(l )\|^2_H\Big)^p\Big]\Big)^{\frac{2}{p}}ds\\
\label{eqqq-17}
&&\ +8C\varepsilon p\int^t_0\Big(E\Big[\Big(\sup_{0\leq l\leq s\wedge \tau_{r,\varepsilon, M}}e^{-k\varepsilon \int^{l}_0 \|u^{r,\varepsilon}(\gamma)\|^2_{H^1}d\gamma}\|u^{r,\varepsilon}(l )-u^{r,\varepsilon}_n(l )\|^2_H\Big)^p\Big]\Big)^{\frac{2}{p}}ds.
\end{eqnarray}
Applying Gronwall inequality, we get
\begin{eqnarray}
\Big(E\Big[\sup_{0\leq t\leq 1\wedge \tau_{r,\varepsilon, M}}(e^{-k\varepsilon \int^{t}_0 \|u^{r,\varepsilon}(s)\|^2_{H^1}ds}\|u^{r,\varepsilon}(t)-u^{r,\varepsilon}_n( t)\|^2_H)^p\Big]\Big)^{\frac{2}{p}}\leq 2\|f-f_n\|^4_H e^{2(\varepsilon C+\frac{\varepsilon C}{\delta})^2+8C\varepsilon p}.
\end{eqnarray}
Then, it follows that
\begin{eqnarray}\notag
&&\Big(E\Big[\sup_{0\leq t\leq 1\wedge \tau_{r,\varepsilon, M}}\|u^{r,\varepsilon}(t)-u^{r,\varepsilon}_n( t)\|^{2p}_H\Big]\Big)^{\frac{2}{p}}\\ \notag
&\leq& \Big(E\Big[\sup_{0\leq t\leq 1\wedge \tau_{r,\varepsilon, M}}(e^{-k\varepsilon \int^{t}_0 \|u^{r,\varepsilon}(s)\|^2_{H^1}ds}\|u^{r,\varepsilon}(t)-u^{r,\varepsilon}_n( t)\|^2_H)^pe^{kp\varepsilon \int^{1\wedge \tau_{r,\varepsilon, M}}_0\|u^{r,\varepsilon}(s)\|^2_{H^1}ds}\Big]\Big)^{\frac{2}{p}}\\ \notag
&\leq& e^{2\varepsilon kM}\Big(E\Big[\sup_{0\leq t\leq 1\wedge \tau_{r,\varepsilon, M}}(e^{-k\varepsilon \int^{t}_0 \|u^{r,\varepsilon}(s)\|^2_{H^1}ds}\|u^{r,\varepsilon}(t)-u^{r,\varepsilon}_n( t)\|^2_H)^p\Big]\Big)^{\frac{2}{p}}\\ \label{eqqq-18}
&\leq& 2e^{2\varepsilon kM}\|f-f_n\|^4_H e^{2(\varepsilon C+\frac{\varepsilon C}{\delta})^2+8C\varepsilon p}.
\end{eqnarray}
Fix $M$, and taking $p=\frac{2}{\varepsilon}$ to get
\begin{eqnarray}\notag
&&\sup_{0<\varepsilon\leq 1}\varepsilon \log P\Big(\sup_{0\leq t\leq 1\wedge \tau_{r,\varepsilon, M}}\|u^{r,\varepsilon}(t)-u^{r,\varepsilon}_n( t)\|^2_H> \delta\Big)\\ \notag
&\leq& \sup_{0<\varepsilon\leq 1}\varepsilon \log\frac{E\Big[\sup_{0\leq t\leq 1\wedge \tau_{r,\varepsilon, M}}\|u^{r,\varepsilon}(t)-u^{r,\varepsilon}_n( t)\|^{2p}_H\Big]}{\delta^p}\\ \notag
&\leq& 2kM+2(C+\frac{ C}{\delta})^2+8C-2\log \delta+\log 2 \|f-f_n\|^4_H\\
\label{eqqq-19}
&\rightarrow & -\infty, \quad {\rm{as}}\ n\rightarrow +\infty.
\end{eqnarray}
By Lemma \ref{lemm-2}, for any $R>0$, there exists a constant $M$ such that for any $\varepsilon\in (0,1]$ and $r>0 $, the following inequality holds,
\begin{eqnarray}\label{eqqq-20}
P\Big((|u^{r,\varepsilon}|^H_{H^1}(1))^2>M\Big)\leq e^{-\frac{R}{\varepsilon}}.
\end{eqnarray}
For such a $M$, by (\ref{eqqq-9}) and (\ref{eqqq-19}), there exists a positive integer $N$, such that for any $n\geq N$,
\begin{eqnarray}\label{eqqq-21}
\sup_{0<\varepsilon\leq 1}\varepsilon \log P\Big(\sup_{0\leq t\leq 1}\|u^{r,\varepsilon}(t)-u^{r,\varepsilon}_n( t)\|^2_H> \delta,(|u^{r,\varepsilon}|^H_{H^1}(1))^2\leq M \Big)\leq -R.
\end{eqnarray}
Putting (\ref{eqqq-20}) and (\ref{eqqq-21}) together, one sees that there exists a positive integer $N$, such that for any $n\geq N$, $\varepsilon\in (0,1]$ and $r>0$,
\begin{eqnarray}\label{eqqq-22}
 P\Big(\sup_{0\leq t\leq 1}\|u^{r,\varepsilon}(t)-u^{r,\varepsilon}_n( t)\|^2_H> \delta\Big)\leq 2e^{-\frac{R}{\varepsilon}}.
\end{eqnarray}
Since $R$ is arbitrary, the lemma follows.
\end{proof}
Using similar argument as Lemma 3.4 in \cite{X-Z}, we have the following  result for the difference $v^{\varepsilon}(\cdot)-v^{\varepsilon}_n(\cdot)$, where $v^{\varepsilon}_n$ is the solution of (\ref{eq-7}) with the initial value $f_n$.
\begin{lemma}\label{lemm-4}
Let the initial value $f\in L^p(\mathbb{T}^d)$ for all $p\in [1,\infty)$. Under Hypothesis H1, we have that
for any $\delta>0$,
\begin{eqnarray}
\lim_{n\rightarrow +\infty}\sup_{0<\varepsilon\leq 1}\varepsilon \log P\Big(\sup_{0\leq t\leq 1}\|v^{\varepsilon}(t)-v^{\varepsilon}_n(t)\|^2_H>\delta\Big)=-\infty.
\end{eqnarray}
\end{lemma}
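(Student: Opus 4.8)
The plan is to run the scheme of Lemma \ref{lemm-3}, noting that the present situation is considerably simpler: equation (\ref{eq-7}) contains neither the transport term $\mathrm{div}(B(\cdot))$ nor the quasilinear term $\mathrm{div}(A_r(\cdot)\nabla\,\cdot\,)$, and by (\ref{equa-3-1-1}) the coefficient $\sigma$ is globally Lipschitz on $H$, so no stopping time and no exponential weight $e^{-k\varepsilon\int_0^t\|\cdot\|_{H^1}^2}$ are needed; structurally the computation is the same additive-noise estimate as in Lemma \ref{lemm-7}. Writing $D^\varepsilon_n(t):=v^\varepsilon(t)-v^\varepsilon_n(t)$ and subtracting the two copies of (\ref{eq-7}),
\[
D^\varepsilon_n(t)=(f-f_n)+\sqrt{\varepsilon}\int_0^t\big(\sigma(\varepsilon s,v^\varepsilon(s))-\sigma(\varepsilon s,v^\varepsilon_n(s))\big)\,dW(s),
\]
so that It\^{o}'s formula applied to $\|D^\varepsilon_n(t)\|_H^2$ gives
\[
\|D^\varepsilon_n(t)\|_H^2=\|f-f_n\|_H^2+2\sqrt{\varepsilon}\int_0^t\big\langle D^\varepsilon_n(s),(\sigma(\varepsilon s,v^\varepsilon(s))-\sigma(\varepsilon s,v^\varepsilon_n(s)))\,dW(s)\big\rangle+\varepsilon\int_0^t\|\sigma(\varepsilon s,v^\varepsilon(s))-\sigma(\varepsilon s,v^\varepsilon_n(s))\|_{\mathcal{L}_2(U,H)}^2\,ds,
\]
and the Lipschitz bound in (\ref{equa-3-1-1}) controls the last term by $\varepsilon C\int_0^t\|D^\varepsilon_n(s)\|_H^2\,ds$.

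First I would take $\sup_{t\le r}$, raise to a power $p\ge 2$ and take expectations. The drift-type term is handled by Cauchy-Schwarz in time, while for the martingale term I would use the dimension-free martingale inequality (\ref{eqqq-2}) together with (\ref{equa-3-1-1}) to get
\[
\Big(E\sup_{0\le t\le r}\Big|\int_0^t\big\langle D^\varepsilon_n(s),(\sigma(\varepsilon s,v^\varepsilon(s))-\sigma(\varepsilon s,v^\varepsilon_n(s)))\,dW(s)\big\rangle\Big|^p\Big)^{1/p}\le C\sqrt{p}\Big(\int_0^r\big(E\|D^\varepsilon_n(s)\|_H^{2p}\big)^{2/p}\,ds\Big)^{1/2}.
\]
Setting $\phi(r):=\big(E\sup_{0\le t\le r}\|D^\varepsilon_n(t)\|_H^{2p}\big)^{2/p}$, squaring the resulting inequality and using $\big(\int_0^r\phi(s)^{1/2}\,ds\big)^2\le r\int_0^r\phi(s)\,ds$, I arrive at
\[
\phi(r)\le C\|f-f_n\|_H^4+\big(C\varepsilon^2+C\varepsilon p\big)\int_0^r\phi(s)\,ds,
\]
whence Gronwall's inequality yields $\phi(1)\le C\|f-f_n\|_H^4\exp\{C\varepsilon^2+C\varepsilon p\}$ with $C$ depending only on the structural constant in (\ref{equa-3-1-1}) and not on $n$ or $r$.

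To conclude I would choose $p=2/\varepsilon$ (so that $p\ge 2$ for all $\varepsilon\in(0,1]$ and $\varepsilon p$ stays bounded) and apply Chebyshev's inequality,
\[
\varepsilon\log P\Big(\sup_{0\le t\le 1}\|D^\varepsilon_n(t)\|_H^2>\delta\Big)\le\varepsilon\log\frac{E\sup_{0\le t\le 1}\|D^\varepsilon_n(t)\|_H^{2p}}{\delta^p}\le-2\log\delta+\log\big(C\|f-f_n\|_H^4\big)+C,
\]
where the right-hand side is uniform in $\varepsilon\in(0,1]$; since $\|f_n-f\|_H\to 0$, it tends to $-\infty$ as $n\to\infty$, which is the assertion. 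There is no genuine obstacle in this argument; the only points needing care are to keep the $p$-dependence of the stochastic-integral estimate at the level $\sqrt{p}$ (this is exactly why (\ref{eqqq-2}) is invoked rather than the classical Burkholder-Davis-Gundy inequality with its $p$-dependent constant), so that after the substitution $p\sim 1/\varepsilon$ the exponent $\varepsilon p$ remains bounded, and to verify that all constants produced by Young's inequality, Cauchy-Schwarz and Gronwall depend only on the constant in (\ref{equa-3-1-1}), so that the final bound is uniform in $\varepsilon$.
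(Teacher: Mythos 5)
Your proof is correct and is essentially the argument the paper intends: the paper omits the details and simply invokes the analogue of Lemma 3.4 in \cite{X-Z}, which is exactly your scheme (It\^{o} formula for $\|v^{\varepsilon}-v^{\varepsilon}_n\|^2_H$, the Lipschitz bound (\ref{equa-3-1-1}), the dimension-free martingale inequality (\ref{eqqq-2}), Gronwall, then $p=2/\varepsilon$ and Chebyshev), the same pattern used in Lemmas \ref{lemm-1}, \ref{lemm-7} and \ref{lemm-3}. No gaps beyond the routine finiteness of the moments entering Gronwall, which the paper also takes for granted.
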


\begin{lemma}\label{lemm-5}
Let the initial value $f\in L^p(\mathbb{T}^d)$ for all $p\in [1,\infty)$. Under Hypotheses H1-H2,
we have that for any $\delta>0$ and $r>0$, and every positive integer $n$,
\begin{eqnarray}
\lim_{\varepsilon\rightarrow 0}\varepsilon \log P\Big(\sup_{0\leq t\leq 1}\|u^{r,\varepsilon}_n(t)-v^{\varepsilon}_n(t)\|^2_H>\delta\Big)=-\infty.
\end{eqnarray}

\end{lemma}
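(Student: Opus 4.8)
The plan is to estimate the difference $w^{\varepsilon}_n:=u^{r,\varepsilon}_n-v^{\varepsilon}_n$ directly. It starts from $w^{\varepsilon}_n(0)=f_n-f_n=0$, and, since the time-rescaled equation (\ref{eqq-2}) carries the factor $\varepsilon$ in front of \emph{all} its deterministic terms while (\ref{eq-7}) has no deterministic part, $w^{\varepsilon}_n$ is driven only by terms of order $\varepsilon$ (the $B$- and $A_r$-drifts of $u^{r,\varepsilon}_n$) together with the noise increment $\sqrt{\varepsilon}\big(\sigma(\varepsilon s,u^{r,\varepsilon}_n)-\sigma(\varepsilon s,v^{\varepsilon}_n)\big)dW$. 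Heuristically $w^{\varepsilon}_n$ is therefore of size $\sqrt{\varepsilon}$, and the task is to upgrade this to a statement on the exponential scale $\varepsilon\log(\cdot)$. As in Lemma \ref{lemm-3}, I would first localise: for $M>0$ set
\[
\tau_{\varepsilon,M}:=\inf\Big\{t\geq 0:\ \|u^{r,\varepsilon}_n(t)\|^2_H>M,\ \ \text{or}\ \ \varepsilon\varrho\int^t_0\|u^{r,\varepsilon}_n(s)\|^2_{H^1}ds>M,\ \ \text{or}\ \ \|v^{\varepsilon}_n(t)\|^2_{H^1}>M\Big\},
\]
so that on $\{\tau_{\varepsilon,M}\geq 1\}$ all the a priori norms appearing below are bounded by a constant $C_M$ that does not depend on $\varepsilon$.

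Next I would apply It\^{o}'s formula to $\|w^{\varepsilon}_n(t\wedge\tau_{\varepsilon,M})\|^2_H$, integrate by parts in the $B$- and $A_r$-terms, and use the Lipschitz bound on $B$, the two-sided estimate $\varrho I\leq A_r\leq CI$ from (\ref{equ-07}), the noise bounds (\ref{equa-3-1-1}), the Poincar\'{e} inequality, and Young's inequality. The key point is that the dissipative term
\[
-2\varepsilon\langle\nabla w^{\varepsilon}_n,A_r(u^{r,\varepsilon}_n)\nabla u^{r,\varepsilon}_n\rangle=-2\varepsilon\langle\nabla u^{r,\varepsilon}_n,A_r(u^{r,\varepsilon}_n)\nabla u^{r,\varepsilon}_n\rangle+2\varepsilon\langle\nabla v^{\varepsilon}_n,A_r(u^{r,\varepsilon}_n)\nabla u^{r,\varepsilon}_n\rangle
\]
yields a genuinely negative contribution $-\varepsilon\varrho\|\nabla u^{r,\varepsilon}_n\|^2_H$ which absorbs every $\varepsilon\|\nabla u^{r,\varepsilon}_n\|^2_H$ term created by Young's inequality; note that, unlike in Lemma \ref{lemm-3}, no exponential weight $e^{-k\varepsilon\int\|u^{r,\varepsilon}_n\|^2_{H^1}}$ is needed here, because $v^{\varepsilon}_n$ has no second-order operator and hence no product $\|w^{\varepsilon}_n\|^2_H\|u^{r,\varepsilon}_n\|^2_{H^1}$ occurs. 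After this absorption, every surviving drift term is, on $[0,t\wedge\tau_{\varepsilon,M}]$, of the form $\varepsilon$ times an integral of $1+\|u^{r,\varepsilon}_n\|^2_H$ or of $\|\nabla v^{\varepsilon}_n\|^2_H$, hence bounded by $\varepsilon C_M$; the It\^{o} correction $\varepsilon\|\sigma(\varepsilon s,u^{r,\varepsilon}_n)-\sigma(\varepsilon s,v^{\varepsilon}_n)\|^2_{\mathcal{L}_2(U,H)}$ is dominated by $\varepsilon C\|w^{\varepsilon}_n(s)\|^2_H$ via (\ref{equa-3-1-1}). Collecting these estimates gives
\[
\sup_{0\leq s\leq t\wedge\tau_{\varepsilon,M}}\|w^{\varepsilon}_n(s)\|^2_H\leq\varepsilon C_M+\varepsilon C\int^t_0\sup_{0\leq l\leq s\wedge\tau_{\varepsilon,M}}\|w^{\varepsilon}_n(l)\|^2_H\,ds+2\sqrt{\varepsilon}\sup_{0\leq s\leq t\wedge\tau_{\varepsilon,M}}\Big|\int^s_0\big\langle w^{\varepsilon}_n,(\sigma(\varepsilon l,u^{r,\varepsilon}_n)-\sigma(\varepsilon l,v^{\varepsilon}_n))dW(l)\big\rangle\Big|.
\]

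I would then take the $p$-th moment ($p\geq 2$), raise to the power $2/p$, and estimate the martingale with the Burkholder-type inequality (\ref{eqqq-2}): its bracket is at most $4\varepsilon C\int^t_0\|w^{\varepsilon}_n\|^4_H\,ds$, and, using Minkowski's integral inequality and $t\leq 1$, this is controlled by $\int^t_0\big(E\sup_{0\leq l\leq s\wedge\tau_{\varepsilon,M}}\|w^{\varepsilon}_n(l)\|^{2p}_H\big)^{2/p}ds$. Writing $\Phi(t):=\big(E\sup_{0\leq s\leq t\wedge\tau_{\varepsilon,M}}\|w^{\varepsilon}_n(s)\|^{2p}_H\big)^{2/p}$, one arrives at an inequality of the form $\Phi(t)\leq C(\varepsilon C_M)^2+C(\varepsilon^2+\varepsilon p)\int^t_0\Phi(s)ds$, whence Gronwall gives $\Phi(1)\leq C(\varepsilon C_M)^2 e^{C(\varepsilon^2+\varepsilon p)}$. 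Taking $p=1/\varepsilon$ keeps the exponential factor bounded, so $E\sup_{0\leq t\leq 1\wedge\tau_{\varepsilon,M}}\|w^{\varepsilon}_n(t)\|^{2p}_H\leq(C_M\varepsilon)^p$ with $C_M$ independent of $\varepsilon$, and Chebyshev's inequality yields
\[
\varepsilon\log P\Big(\sup_{0\leq t\leq 1\wedge\tau_{\varepsilon,M}}\|w^{\varepsilon}_n(t)\|^2_H>\delta\Big)\leq\log\frac{C_M\varepsilon}{\delta}\longrightarrow-\infty\qquad(\varepsilon\to 0).
\]

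Finally I would remove the localisation in the standard way. Since
\[
\Big\{\sup_{0\leq t\leq 1}\|w^{\varepsilon}_n(t)\|^2_H>\delta\Big\}\subset\Big\{\sup_{0\leq t\leq 1\wedge\tau_{\varepsilon,M}}\|w^{\varepsilon}_n(t)\|^2_H>\delta\Big\}\cup\Big\{(|u^{r,\varepsilon}_n|^H_{H^1}(1))^2>M\Big\}\cup\Big\{\sup_{0\leq t\leq 1}\|v^{\varepsilon}_n(t)\|^2_{H^1}>M\Big\},
\]
I would invoke Lemma \ref{lemm-2} (whose proof uses only $f\in H$, so it applies with the initial datum $f_n$) and Lemma \ref{lemm-7}: given $R>0$, choose $M$ so large that the last two probabilities are each $\leq e^{-R/\varepsilon}$ for all $\varepsilon\in(0,1]$, and then use the previous display to make the first probability $\leq e^{-R/\varepsilon}$ once $\varepsilon$ is small; this gives $\varepsilon\log P(\sup_{0\leq t\leq 1}\|w^{\varepsilon}_n(t)\|^2_H>\delta)\leq\varepsilon\log(3e^{-R/\varepsilon})\to-R$, and $R$ is arbitrary. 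The only delicate point is the bookkeeping in the It\^{o} step: one must check that, after localisation and after absorbing the $\varepsilon\|\nabla u^{r,\varepsilon}_n\|^2_H$ pieces into the dissipation, every remaining drift contribution carries a positive power of $\varepsilon$, so that with the choice $p=1/\varepsilon$ the Gronwall constant stays $O(1)$ and the right-hand side of the penultimate display genuinely tends to $-\infty$.
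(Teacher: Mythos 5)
Your proposal is correct and follows essentially the same route as the paper: localization by a stopping time built from $(|u^{r,\varepsilon}_n|^H_{H^1}(1))^2$ and $\sup_t\|v^{\varepsilon}_n(t)\|^2_{H^1}$, It\^{o}'s formula for the squared $H$-distance with the $\varepsilon$-factor on all drift terms, the martingale-moment inequality (\ref{eqqq-2}) with $p\sim 1/\varepsilon$, Gronwall and Chebyshev, and then removal of the localization via Lemmas \ref{lemm-2}/(\ref{e-4}) and \ref{lemm-7}. The only (harmless) deviation is bookkeeping in the elliptic term: you keep the dissipation $-2\varepsilon\langle\nabla u^{r,\varepsilon}_n,A_r(u^{r,\varepsilon}_n)\nabla u^{r,\varepsilon}_n\rangle$ and absorb the $\varepsilon\|\nabla u^{r,\varepsilon}_n\|^2_H$ pieces, whereas the paper writes $A_r(u^{r,\varepsilon}_n)\nabla u^{r,\varepsilon}_n=A_r(u^{r,\varepsilon}_n)\nabla(u^{r,\varepsilon}_n-v^{\varepsilon}_n)+A_r(u^{r,\varepsilon}_n)\nabla v^{\varepsilon}_n$ and gets dissipation in $\|u^{r,\varepsilon}_n-v^{\varepsilon}_n\|^2_{H^1}$; both yield the same $O(\varepsilon^2 C_M)$ bound after localization.
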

\begin{proof}
For any $M>0$, define the following stopping times
\begin{eqnarray*}
\tau^{1}_{r,\varepsilon, M}&:=&\inf\Big\{t\geq 0: \varepsilon \varrho \int^t_0\|u^{r,\varepsilon}_n(s)\|^2_{H^1}ds>M, \ {\rm{or}}\ \|u^{r,\varepsilon}_n(t)\|^2_H>M \Big\},\\
\tau^{2}_{\varepsilon, M}&:=&\inf\Big\{t\geq 0: \|v^{\varepsilon}_n(t)\|^2_{H^1}>M \Big\},\\
\tau&:=&\tau^{1}_{r,\varepsilon, M}\wedge\tau^{2}_{\varepsilon, M} .
\end{eqnarray*}
Then we have
\begin{eqnarray*}
&&P\Big(\sup_{0\leq t\leq 1}\|u^{r,\varepsilon}_n(t)-v^{\varepsilon}_n(t)\|^2_H>\delta, (|u^{r,\varepsilon}_n|^H_{H^1}(1))^2\leq M, \sup_{t\in[0,1]}\|v^{\varepsilon}_n(t)\|^2_{H^1}\leq M\Big)\\
&\leq& P\Big(\sup_{0\leq t\leq 1}\|u^{r,\varepsilon}_n(t)-v^{\varepsilon}_n(t)\|^2_H>\delta, \tau\geq 1\Big)\\
&\leq& P\Big(\sup_{0\leq t\leq 1\wedge \tau}\|u^{r,\varepsilon}_n(t)-v^{\varepsilon}_n(t)\|^2_H>\delta\Big).
\end{eqnarray*}
Applying It\^{o} formula to $\|u^{r,\varepsilon}_n(t\wedge \tau)-v^{\varepsilon}_n(t\wedge \tau)\|^2_H$, we deduce that
\begin{eqnarray*}
&&\|u^{r,\varepsilon}_n(t\wedge \tau)-v^{\varepsilon}_n(t\wedge \tau)\|^2_H\\
&=& -2\varepsilon \int^{t\wedge \tau}_0\langle u^{r,\varepsilon}_n(s)-v^{\varepsilon}_n(s), div(B(u^{r,\varepsilon}_n(s)))\rangle ds\\
&&\ +2\varepsilon \int^{t\wedge \tau}_0\langle u^{r,\varepsilon}_n(s)-v^{\varepsilon}_n(s), div(A_r(u^{r,\varepsilon}_n(s))\nabla u^{r,\varepsilon}_n(s))\rangle ds\\
&&\ +2\sqrt{\varepsilon} \int^{t\wedge \tau}_0\langle u^{r,\varepsilon}_n(s)-v^{\varepsilon}_n(s), (\sigma(\varepsilon s,u^{r,\varepsilon}_n(s))-\sigma(\varepsilon s,v^{\varepsilon}_n(s))dW(s) \rangle\\
&&\ +\varepsilon \int^{t\wedge \tau}_0\|\sigma(\varepsilon s,u^{r,\varepsilon}_n(s))-\sigma(\varepsilon s,v^{\varepsilon}_n(s)\|^2_{\mathcal{L}_2(U,H)}ds\\
&:=&K_1(t)+K_2(t)+K_3(t)+K_4(t).
\end{eqnarray*}
By integration by parts, Hypothesis H1 and Young's inequality, we get
\begin{eqnarray*}
K_1(t)&\leq& 2\varepsilon C \int^{t\wedge \tau}_0\| u^{r,\varepsilon}_n(s)-v^{\varepsilon}_n(s)\|_{H^1} (1+\|u^{r,\varepsilon}_n(s)\|_{H}) ds\\
&\leq& \frac{\varepsilon \varrho}{2}\int^{t\wedge \tau}_0\| u^{r,\varepsilon}_n(s)-v^{\varepsilon}_n(s)\|^2_{H^1}ds+\frac{\varepsilon C}{\varrho}\int^{t\wedge \tau}_0(1+\|u^{r,\varepsilon}_n(s)\|^2_{H})ds.
\end{eqnarray*}
By integration by parts, Hypothesis H1, (\ref{eqqq-13}) and Young's inequality, it follows that
\begin{eqnarray*}
K_2(t)&=&-2\varepsilon \int^{t\wedge \tau}_0\langle \nabla(u^{r,\varepsilon}_n(s)-v^{\varepsilon}_n(s)), A_r(u^{r,\varepsilon}_n(s))\nabla (u^{r,\varepsilon}_n(s)-v^{\varepsilon}_n(s))\rangle ds\\
&&\ -2\varepsilon \int^{t\wedge \tau}_0\langle \nabla(u^{r,\varepsilon}_n(s)-v^{\varepsilon}_n(s)), A_r(u^{r,\varepsilon}_n(s))\nabla v^{\varepsilon}_n(s)\rangle ds\\
&\leq& -2\varepsilon \varrho \int^{t\wedge \tau}_0\| u^{r,\varepsilon}_n(s)-v^{\varepsilon}_n(s)\|^2_{H^1}ds
+2\varepsilon \int^{t\wedge \tau}_0 \|A_r(u^{r,\varepsilon}_n(s))\|_{L^{\infty}}\| u^{r,\varepsilon}_n(s)-v^{\varepsilon}_n(s)\|_{H^1}\|v^{\varepsilon}_n(s)\|_{H^1}ds\\
&\leq& -2\varepsilon \varrho \int^{t\wedge \tau}_0\| u^{r,\varepsilon}_n(s)-v^{\varepsilon}_n(s)\|^2_{H^1}ds
+2\varepsilon C_r \int^{t\wedge \tau}_0 \|u^{r,\varepsilon}_n(s)\|_{H}\| u^{r,\varepsilon}_n(s)-v^{\varepsilon}_n(s)\|_{H^1}\|v^{\varepsilon}_n(s)\|_{H^1}ds\\
&\leq& -\frac{3}{2}\varepsilon \varrho \int^{t\wedge \tau}_0\| u^{r,\varepsilon}_n(s)-v^{\varepsilon}_n(s)\|^2_{H^1}ds
+\frac{\varepsilon C_r}{\varrho} \int^{t\wedge \tau}_0 \|u^{r,\varepsilon}_n(s)\|^2_{H}\|v^{\varepsilon}_n(s)\|^2_{H^1}ds.
\end{eqnarray*}
Utilizing Hypothesis H1, it follows that
\begin{eqnarray*}
K_4(t)\leq \varepsilon C \int^{t\wedge \tau}_0\|u^{r,\varepsilon}_n(s)-v^{\varepsilon}_n(s)\|^2_{H}ds.
\end{eqnarray*}
Collecting all the previous estimates, we deduce that
\begin{eqnarray*}
&&\|u^{r,\varepsilon}_n(t\wedge \tau)-v^{\varepsilon}_n(t\wedge \tau)\|^2_H+\varepsilon \varrho \int^{t\wedge \tau}_0\| u^{r,\varepsilon}_n(s)-v^{\varepsilon}_n(s)\|^2_{H^1}ds\\
&\leq & \frac{\varepsilon C}{\varrho}\int^{t\wedge \tau}_0(1+\|u^{r,\varepsilon}_n(s)\|^2_{H})ds+\frac{\varepsilon C_r}{\varrho} \int^{t\wedge \tau}_0 \|u^{r,\varepsilon}_n(s)\|^2_{H}\|v^{\varepsilon}_n(s)\|^2_{H^1}ds\\
&&\ +\varepsilon C \int^{t\wedge \tau}_0\|u^{r,\varepsilon}_n(s)-v^{\varepsilon}_n(s)\|^2_{H}ds+|K_3(t)|.
\end{eqnarray*}
Using Gronwall's inequality, we get
\begin{eqnarray*}
&&\|u^{r,\varepsilon}_n(t\wedge \tau)-v^{\varepsilon}_n(t\wedge \tau)\|^2_H\\
&\leq& \Big[\frac{\varepsilon C}{\varrho}\int^{t\wedge \tau}_0(1+\|u^{r,\varepsilon}_n(s)\|^2_{H})ds+\frac{\varepsilon C_r}{\varrho} \int^{t\wedge \tau}_0 \|u^{r,\varepsilon}_n(s)\|^2_{H}\|v^{\varepsilon}_n(s)\|^2_{H^1}ds+|K_3(t)|\Big]e^{\varepsilon C t}.
\end{eqnarray*}
By (\ref{eqqq-2}) and the definition of $\tau$, we deduce that
\begin{eqnarray*}
&&\Big(E \sup_{0\leq s\leq t\wedge \tau}\|u^{r,\varepsilon}_n(s)-v^{\varepsilon}_n(s)\|^{2p}_H\Big)^{\frac{2}{p}}\\
&\leq& e^{\varepsilon C}\Big[\frac{\varepsilon^2 C_r}{\varrho^2}\Big(E\Big(\int^{t\wedge  \tau}_0 \|u^{r,\varepsilon}_n(s)\|^2_{H}\|v^{\varepsilon}_n(s)\|^2_{H^1}ds\Big)^p\Big)^{\frac{2}{p}}+\frac{\varepsilon^2 C}{\varrho^2} \Big(E\Big(\int^{t\wedge  \tau}_0 (1+\|u^{r,\varepsilon}_n(s)\|^2_{H})ds\Big)^p\Big)^{\frac{2}{p}}\\
&&\ +8\varepsilon pC\int^t_0\Big(E\sup_{0\leq l\leq s\wedge \tau}\|u^{r,\varepsilon}_n(l)-v^{\varepsilon}_n(l)\|^{2p}_H\Big)^{\frac{2}{p}}ds\Big]\\
&\leq & e^{\varepsilon C}\Big[\frac{\varepsilon^2 C_r}{\varrho^2}M^4+\frac{\varepsilon^2 C}{\varrho^2}(1+M)^2\Big]
+e^{\varepsilon C}\cdot 8\varepsilon pC\int^t_0\Big(E\sup_{0\leq l\leq s\wedge \tau}\|u^{r,\varepsilon}_n(l)-v^{\varepsilon}_n(l)\|^{2p}_H\Big)^{\frac{2}{p}}ds.
\end{eqnarray*}
Applying Gronwall inequality again, we get
\begin{eqnarray}\notag
&&\Big(E \sup_{0\leq s\leq t\wedge \tau}\|u^{r,\varepsilon}_n(s)-v^{\varepsilon}_n(s)\|^{2p}_H\Big)^{\frac{2}{p}}\\
\label{eqqq-23}
&\leq& e^{\varepsilon C}\Big[\frac{\varepsilon^2 C_r}{\varrho^2}M^4+\frac{\varepsilon^2 C}{\varrho^2}(1+M)^2\Big]\cdot e^{8\varepsilon pC e^{\varepsilon C}}.
\end{eqnarray}
From (\ref{e-4}) and Lemma \ref{lemm-7}, we know that for any $R>0$, there exits a $M$ such that
\begin{eqnarray}\label{eqqq-24}
\sup_{0<\varepsilon \leq 1}\varepsilon \log P\Big((|u^{r,\varepsilon}_n|^H_{H^1}(1))^2>M\Big)&\leq &-R,\\
\label{eqqq-25}
\sup_{0<\varepsilon \leq 1}\varepsilon \log P\Big(\sup_{t\in [0,1]}\|v^{\varepsilon}_n(t)\|^2_{H^1}>M\Big)&\leq & -R.
\end{eqnarray}

For such a constant $M$, let $p=\frac{2}{\varepsilon}$ in (\ref{eqqq-23}) to obtain
\begin{eqnarray*}
&&\varepsilon \log P\Big(\sup_{0\leq s\leq 1 }\|u^{r,\varepsilon}_n(s)-v^{\varepsilon}_n(s)\|^2_H>\delta, (|u^{r,\varepsilon}_n|^H_{H^1}(1))^2\leq M,\sup_{t\in [0,1]}\|v^{\varepsilon}_n(t)\|^2_{H^1}\leq M\Big)\\
&\leq& \varepsilon \log P\Big(\sup_{0\leq s\leq 1\wedge \tau }\|u^{r,\varepsilon}_n(s)-v^{\varepsilon}_n(s)\|^2_H>\delta\Big)\\
&\leq & \log \Big(E\sup_{0\leq s\leq 1\wedge \tau }\|u^{r,\varepsilon}_n(s)-v^{\varepsilon}_n(s)\|^{2p}_H\Big)^{\frac{2}{p}}-2\log \delta\\
&\leq& 2\varepsilon C+\log\Big[\frac{\varepsilon^2 C}{\varrho^2}M^4+\frac{\varepsilon^2 C_r}{\varrho^2}(1+M)^2\Big]+8\varepsilon pC e^{\varepsilon C}-2\log \delta\\
&\rightarrow& -\infty, \quad {\rm{as}}\ \varepsilon \rightarrow 0.
\end{eqnarray*}
Thus, for any $R>0$, there exists a $\varepsilon_0$ such that for any $\varepsilon$ satisfying $0<\varepsilon\leq \varepsilon_0$,
\begin{eqnarray}\label{eqqq-26}
P\Big(\sup_{0\leq s\leq 1 }\|u^{r,\varepsilon}_n(s)-v^{\varepsilon}_n(s)\|^2_H>\delta, (|u^{r,\varepsilon}_n|^H_{H^1}(1))^2\leq M,\sup_{t\in [0,1]}\|v^{\varepsilon}_n(t)\|^2_{H^1}\leq M\Big)\leq e^{-\frac{R}{\varepsilon}}.
\end{eqnarray}
Putting (\ref{eqqq-24})-(\ref{eqqq-26}) together, we see that there exists a constant $\varepsilon_0$ such that for any $\varepsilon\in (0, \varepsilon_0]$, $r>0$ and $n\geq 1$,
\begin{eqnarray*}
P\Big(\sup_{0\leq s\leq 1 }\|u^{r,\varepsilon}_n(s)-v^{\varepsilon}_n(s)\|^2_H>\delta\Big)\leq 3e^{-\frac{R}{\varepsilon}}.
\end{eqnarray*}
Since $R$ is arbitrary, the proof is completed.

\end{proof}

\begin{lemma} \label{lemm-6}
Let $f \in  C^{1+l}(\mathbb{T}^d)$ for some $l>0$. Under Hypotheses H1 and H3,
we have that for any $\delta>0$,
\begin{eqnarray}
\lim_{r\rightarrow 0}\sup_{0<\varepsilon\leq 1}\varepsilon \log P\Big(\sup_{0\leq t\leq 1}\|u^{\varepsilon}(t)-u^{r,\varepsilon}(t)\|^2_H>\delta\Big)=-\infty.
\end{eqnarray}
\end{lemma}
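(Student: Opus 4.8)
The plan is to set $w^{r,\varepsilon}:=u^{\varepsilon}-u^{r,\varepsilon}$, which solves, with $w^{r,\varepsilon}(0)=0$,
\[
dw^{r,\varepsilon}+\varepsilon\,\mathrm{div}\big(B(u^{\varepsilon})-B(u^{r,\varepsilon})\big)dt=\varepsilon\,\mathrm{div}\big(A(u^{\varepsilon})\nabla u^{\varepsilon}-A_r(u^{r,\varepsilon})\nabla u^{r,\varepsilon}\big)dt+\sqrt{\varepsilon}\,\big(\sigma(\varepsilon t,u^{\varepsilon})-\sigma(\varepsilon t,u^{r,\varepsilon})\big)dW(t),
\]
and to run an $L^{2}$-energy estimate in the spirit of Lemma \ref{lemm-3} and Lemma \ref{lemm-5}. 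The crucial algebraic step is the telescoping
\[
A(u^{\varepsilon})\nabla u^{\varepsilon}-A_r(u^{r,\varepsilon})\nabla u^{r,\varepsilon}=A(u^{\varepsilon})\nabla w^{r,\varepsilon}+\big(A(u^{\varepsilon})-A(u^{r,\varepsilon})\big)\nabla u^{r,\varepsilon}+\big(A(u^{r,\varepsilon})-P_rA(u^{r,\varepsilon})\big)\nabla u^{r,\varepsilon}.
\]
After integrating by parts against $\nabla w^{r,\varepsilon}$: the first term gives the coercive contribution $-2\varepsilon\varrho\|\nabla w^{r,\varepsilon}\|_{H}^{2}$ by Hypothesis H1(ii); the second is dominated, via the Lipschitz property of $A$ and the uniform bound (\ref{eqq-3}) $\sup_{r>0}\sup_{0<\varepsilon\le1}E\sup_{t}\|\nabla u^{r,\varepsilon}(t)\|_{L^{\infty}}^{p}<\infty$, by $\frac{\varepsilon\varrho}{4}\|\nabla w^{r,\varepsilon}\|_{H}^{2}+\frac{C\varepsilon}{\varrho}\|\nabla u^{r,\varepsilon}\|_{L^{\infty}}^{2}\|w^{r,\varepsilon}\|_{H}^{2}$; and the third, the genuine smoothing error, is estimated in $L^{2}$ by $\|A(u^{r,\varepsilon})-P_rA(u^{r,\varepsilon})\|_{H}\,\|\nabla u^{r,\varepsilon}\|_{L^{\infty}}\le C\sqrt{r}\,\big(1+\|\nabla u^{r,\varepsilon}\|_{L^{\infty}}^{2}\big)$, using the elementary heat-kernel bound $\|P_rg-g\|_{H}\le C\sqrt{r}\,\|g\|_{H^{1}}$ together with $\|\nabla(A(u^{r,\varepsilon}))\|_{H}\le C\|\nabla u^{r,\varepsilon}\|_{H}$ and the boundedness of $A$. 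The $B$- and $\sigma$-differences are absorbed by the Lipschitz bounds of Hypothesis H1 as before.

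To kill the factor $\|\nabla u^{r,\varepsilon}\|_{L^{\infty}}^{2}$ standing in front of $\|w^{r,\varepsilon}\|_{H}^{2}$ in the resulting Gronwall inequality, I would apply It\^o's formula not to $\|w^{r,\varepsilon}(t)\|_{H}^{2}$ but to $\psi(t)\|w^{r,\varepsilon}(t)\|_{H}^{2}$ with $\psi(t):=\exp\big(-k\varepsilon\int_{0}^{t}\|\nabla u^{r,\varepsilon}(s)\|_{L^{\infty}}^{2}ds\big)$ and $k>C/\varrho$, exactly the weight used in Lemma \ref{lemm-3}. Collecting terms, applying Gronwall and then the martingale moment inequality (\ref{eqqq-2}) on the stopped interval determined by
\[
\tau:=\inf\Big\{t\ge0:\ \int_{0}^{t}\|\nabla u^{r,\varepsilon}(s)\|_{L^{\infty}}^{4}ds>M\ \ \text{or}\ \ (|u^{\varepsilon}|^{H}_{H^{1}}(t))^{2}>M\ \ \text{or}\ \ (|u^{r,\varepsilon}|^{H}_{H^{1}}(t))^{2}>M\Big\},
\]
(on $\{\tau\ge1\}$ the quadratic variation of the martingale is controlled since $\|w^{r,\varepsilon}\|_{H}^{2}\le2\|u^{\varepsilon}\|_{H}^{2}+2\|u^{r,\varepsilon}\|_{H}^{2}\le4M$, while Hölder gives $\int_{0}^{1}\|\nabla u^{r,\varepsilon}\|_{L^{\infty}}^{2}ds\le\sqrt{M}$, hence $\psi(t)^{-1}\le e^{k\sqrt{M}}$ and the smoothing-error forcing is $\le C\varepsilon rM$), I expect a bound of the form
\[
\Big(E\sup_{t\le1\wedge\tau}\|w^{r,\varepsilon}(t)\|_{H}^{2p}\Big)^{\frac{2}{p}}\le C\,e^{2k\sqrt{M}}\,(rM)^{2}\,e^{C\varepsilon p}.
\]
Taking $p=1/\varepsilon$ turns $e^{C\varepsilon p}$ into a constant, so Chebyshev's inequality yields
\[
\sup_{0<\varepsilon\le1}\varepsilon\log P\Big(\sup_{t\le1\wedge\tau}\|w^{r,\varepsilon}(t)\|_{H}^{2}>\delta\Big)\le k\sqrt{M}+\log M+C-\log\delta+\log r,
\]
which tends to $-\infty$ as $r\to0$ for every fixed $M$.

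It remains to discard the stopping time: $P(\sup_{t\le1}\|w^{r,\varepsilon}(t)\|_{H}^{2}>\delta)\le P(\sup_{t\le1\wedge\tau}\|w^{r,\varepsilon}(t)\|_{H}^{2}>\delta)+P(\tau<1)$, and $P(\tau<1)$ is at most the sum of $P((|u^{\varepsilon}|^{H}_{H^{1}}(1))^{2}>M)$, $P((|u^{r,\varepsilon}|^{H}_{H^{1}}(1))^{2}>M)$ and $P(\int_{0}^{1}\|\nabla u^{r,\varepsilon}\|_{L^{\infty}}^{4}ds>M)$. The first two decay at the required exponential rate by Lemma \ref{lemm-1} and Lemma \ref{lemm-2}; for the third one follows the derivation of (\ref{eqq-3}) while keeping track of the $\varepsilon$-scaling of (\ref{eqq-2}), exactly as Lemma \ref{lemm-1} was obtained, to get $\lim_{M\to\infty}\sup_{r>0}\sup_{0<\varepsilon\le1}\varepsilon\log P(\int_{0}^{1}\|\nabla u^{r,\varepsilon}\|_{L^{\infty}}^{4}ds>M)=-\infty$. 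Given $R>0$, one first fixes $M$ so large that $\sup_{r,\varepsilon}\varepsilon\log P(\tau<1)\le-R$, and then chooses $r_{0}$ so small that $\sup_{\varepsilon}\varepsilon\log P(\sup_{t\le1\wedge\tau}\|w^{r,\varepsilon}\|_{H}^{2}>\delta)\le-R$ for all $r<r_{0}$; since $R$ is arbitrary, the lemma follows.

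The main obstacle, as throughout this paper, is the term $\mathrm{div}(A(u)\nabla u)$, which is neither monotone nor locally monotone: it is what forces the heat-kernel regularization $A_r$ and the use of the higher regularity (\ref{eqq-3}). The delicate point is that (\ref{eqq-3}) is only an $L^{p}(\Omega)$-bound, so it cannot be fed directly into a large-deviation estimate; it must be routed through the exponential-weight / stopping-time device together with the explicit gain $\sqrt{r}$ coming from $\|P_rg-g\|_{H}\le C\sqrt{r}\,\|g\|_{H^{1}}$, which is precisely what produces the decisive $\log r\to-\infty$ in the bound above.
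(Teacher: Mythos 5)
Your overall architecture---It\^{o}'s formula for $\|u^{\varepsilon}-u^{r,\varepsilon}\|_H^2$, a three-term telescoping of $A(u^{\varepsilon})\nabla u^{\varepsilon}-A_r(u^{r,\varepsilon})\nabla u^{r,\varepsilon}$, Gronwall, moments of order $p=O(1/\varepsilon)$ via (\ref{eqqq-2}), and Chebyshev---is the same as the paper's. Your telescoping places the smoothing error at $u^{r,\varepsilon}$, whereas the paper writes $(A(u^{\varepsilon})-P_rA(u^{\varepsilon}))\nabla u^{r,\varepsilon}$ and invokes only the qualitative strong continuity of $P_r$ in (\ref{eqqq-41}); your quantitative version with $\|P_rg-g\|_H\le C\sqrt r\,\|g\|_{H^1}$ and $\|\nabla A(u^{r,\varepsilon})\|_H\le C\|\nabla u^{r,\varepsilon}\|_H$ is a harmless (indeed cleaner) variant, and the coercivity, Lipschitz and martingale terms are handled correctly.

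The genuine gap is the third component of your stopping time. Your removal of $\tau$ requires
\[
\lim_{M\to\infty}\ \sup_{r>0}\ \sup_{0<\varepsilon\le1}\ \varepsilon\log P\Big(\int_0^1\|\nabla u^{r,\varepsilon}(s)\|_{L^\infty}^4\,ds>M\Big)=-\infty,
\]
and you assert this follows ``exactly as Lemma \ref{lemm-1} was obtained.'' It does not. An exponential tail at rate $e^{-R/\varepsilon}$ needs moments of order $p\sim1/\varepsilon$ with controlled growth in $p$ (of the type $C^pp^{p/2}$), which Lemma \ref{lemm-1} extracts from (\ref{eqqq-2}) because $\|u^{\varepsilon}\|_H^2$ obeys a closed $L^2$-energy identity with a coercive dissipation term. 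The gradient bound (\ref{eqq-3}) is of an entirely different nature: it is Theorem 4.4 of \cite{H-Z}, proved through the mild formulation, heat-kernel smoothing and stochastic convolutions in $\zeta$-radonifying spaces under Hypothesis H3, and it only asserts finiteness of $E\sup_t\|\nabla u^{r,\varepsilon}(t)\|^p_{L^\infty}$ for each fixed $p$, with no stated control of the constant as $p\to\infty$. Fixed-order polynomial moments can never produce $\varepsilon\log P(\cdot>M)\to-\infty$; to make your route work you would have to redo the whole regularity argument of \cite{H-Z} tracking the $p$-dependence of the constants, which is a substantial missing piece, not a routine repetition of Lemma \ref{lemm-1}. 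For comparison, the paper avoids a stopping time on the gradient altogether: after Gronwall it replaces $\sup_s\|\nabla u^{r,\varepsilon}(s)\|^2_{L^\infty}$ in the exponent by $\big(E\sup_s\|\nabla u^{r,\varepsilon}(s)\|^{2p}_{L^\infty}\big)^{2/p}$ and appeals to (\ref{eqqq-40}), so it only ever uses (\ref{eqq-3}) through moment bounds rather than through an exponential tail estimate; if you want a proof in the spirit of Lemmas \ref{lemm-3} and \ref{lemm-5}, you must either supply the tail estimate above or switch to the paper's moment-substitution device.
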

\begin{proof}
%Define a stopping time
%\[
%\tau^M:=\inf\{t\geq 0: \varepsilon \varrho\int^t_0 \|u^{\varepsilon}(s)\|^2_{H^1}ds> M\}.
%\]
%Clearly,
%\begin{eqnarray}\notag
%&&P(\sup_{0\leq t\leq 1}\|u^{\varepsilon}-u^{r,\varepsilon}\|^2_H> \delta)\\
%\label{eqqq-27}
%&\leq& P(\sup_{0\leq t\leq 1}\|u^{\varepsilon}-u^{r,\varepsilon}\|^2_H> \delta, \varepsilon \varrho \int^1_0 \|u^{\varepsilon}(s)\|^2_{H^1}ds\leq M)+P(\varepsilon \varrho \int^1_0 \|u^{\varepsilon}(s)\|^2_{H^1}ds> M).
%\end{eqnarray}
From (\ref{eqq-5}) and (\ref{eqq-2}), we deduce that
\begin{eqnarray*}
&&u^{\varepsilon}(t)-u^{r,\varepsilon}(t)+\varepsilon\int^t_0 div(B(u^{\varepsilon}(s))-B(u^{r,\varepsilon}(s)))ds\\
&=& \varepsilon \int^t_0  div(A(u^{\varepsilon}(s))\nabla u^{\varepsilon}(s)-A_r(u^{r,\varepsilon}(s))\nabla u^{r,\varepsilon}(s))ds\\
&&\ +\sqrt{\varepsilon} \int^t_0 (\sigma(\varepsilon s,u^{\varepsilon}(s) )-\sigma(\varepsilon s,u^{r,\varepsilon}(s) ))dW(s).
\end{eqnarray*}
Applying It\^{o} formula to $\|u^{\varepsilon}(t)-u^{r,\varepsilon}(t)\|^2_H$, one obtains that
\begin{eqnarray*}
&&\|u^{\varepsilon}(t)-u^{r,\varepsilon}(t)\|^2_H\\
&=& -2\varepsilon \int^{t}_0 \langle u^{\varepsilon}(s)-u^{r,\varepsilon}(s), div ( B(u^{\varepsilon}(s))-B(u^{r,\varepsilon}(s)))\rangle ds\\
&&\ +2\varepsilon \int^{t}_0 \langle u^{\varepsilon}(s)-u^{r,\varepsilon}(s), div ( A(u^{\varepsilon}(s))\nabla u^{\varepsilon}(s) -A(u^{r,\varepsilon}(s))\nabla u^{r,\varepsilon}(s))\rangle ds\\
&&\ +2\sqrt{\varepsilon} \int^{t}_0\langle u^{\varepsilon}(s)-u^{r,\varepsilon}(s), (\sigma(\varepsilon s,u^{\varepsilon}(s))-\sigma(\varepsilon s,u^{r,\varepsilon}(s))dW(s) \rangle\\
&&\ +\varepsilon \int^{t}_0\|\sigma(\varepsilon s,u^{\varepsilon}(s))-\sigma(\varepsilon s,u^{r,\varepsilon}(s)\|^2_{\mathcal{L}_2(U,H)}ds\\
&:=&L_1(t)+L_2(t)+L_3(t)+L_4(t).
\end{eqnarray*}
By integration by parts, Hypothesis H1 and Young's inequality, we have
\begin{eqnarray*}
L_1(t)&=&2\varepsilon \int^{t}_0 \langle \nabla(u^{\varepsilon}(s)-u^{r,\varepsilon}(s)),  B(u^{\varepsilon}(s))-B(u^{r,\varepsilon}(s))\rangle ds\\
&\leq& 2\varepsilon C\int^{t}_0 \|u^{\varepsilon}(s)-u^{r,\varepsilon}(s)\|_{H^1}\|u^{\varepsilon}(s)-u^{r,\varepsilon}(s)\|_Hds\\
&\leq& \frac{\varepsilon \varrho }{2}\int^{t}_0 \|u^{\varepsilon}(s)-u^{r,\varepsilon}(s)\|^2_{H^1}ds+\frac{\varepsilon C}{\varrho}\int^{t}_0 \|u^{\varepsilon}(s)-u^{r,\varepsilon}(s)\|^2_{H}ds.
\end{eqnarray*}
By integration by parts, we deduce that
\begin{eqnarray}\notag
L_2(t)&=&-2\varepsilon \int^{t}_0 \langle \nabla (u^{\varepsilon}(s)-u^{r,\varepsilon}(s)), A(u^{\varepsilon}(s))\nabla u^{\varepsilon}(s) -A_r(u^{r,\varepsilon}(s))\nabla u^{r,\varepsilon}(s)\rangle ds\\ \notag
&=& -2\varepsilon \int^{t}_0 \langle
\nabla (u^{\varepsilon}(s)-u^{r,\varepsilon}(s)),A(u^{\varepsilon}(s))\nabla (u^{\varepsilon}(s)-u^{r,\varepsilon}(s))\rangle ds\\ \notag
&&\ -2\varepsilon \int^{t}_0 \langle \nabla (u^{\varepsilon}(s)-u^{r,\varepsilon}(s)), (A(u^{\varepsilon}(s)) -A_r(u^{\varepsilon}(s)))\nabla u^{r,\varepsilon}(s)\rangle ds\\ \notag
&&\ -2\varepsilon \int^{t}_0 \langle \nabla (u^{\varepsilon}(s)-u^{r,\varepsilon}(s)), (A_r(u^{\varepsilon}(s))-A_r(u^{r,\varepsilon}(s)))\nabla u^{r,\varepsilon}(s)\rangle ds\\
\label{eqqq-31}
&\leq& -2\varepsilon \varrho \int^{t}_0 \| u^{\varepsilon}(s)-u^{r,\varepsilon}(s)\|^2_{H^1} ds+L_{2,1}(t)+L_{2,2}(t).
\end{eqnarray}
For $L_{2,1}(t)$, by Young's inequality, we reach that
\begin{eqnarray}\notag
L_{2,1}(t)&\leq& 2\varepsilon \int^{t}_0 \|\nabla u^{r,\varepsilon}(s)\|_{L^{\infty}}\left(\int_{\mathbb{T}^d} |\nabla (u^{\varepsilon}(s)-u^{r,\varepsilon}(s))||A(u^{\varepsilon}(s))-P_r (A(u^{\varepsilon}(s)))|dx\right)ds\\ \notag
&\leq& 2\varepsilon \int^{t}_0 \|\nabla u^{r,\varepsilon}(s)\|_{L^{\infty}}\|u^{\varepsilon}(s)-u^{r,\varepsilon}(s)\|_{H^1}\|A(u^{\varepsilon}(s))-P_r (A(u^{\varepsilon}(s)))\|_Hds\\ \notag
&\leq&\frac{\varepsilon \varrho}{2} \int^{t}_0 \| u^{\varepsilon}(s)-u^{r,\varepsilon}(s)\|^2_{H^1}ds+\frac{\varepsilon C}{\varrho}\int^{t}_0\|\nabla u^{r,\varepsilon}(s)\|^2_{L^{\infty}}|A(u^{\varepsilon}(s))-P_r A(u^{\varepsilon}(s))\|^2_Hds\\
\notag
&\leq&\frac{\varepsilon \varrho}{2} \int^{t}_0 \| u^{\varepsilon}(s)-u^{r,\varepsilon}(s)\|^2_{H^1}ds\\
\label{eqqq-36}
&&\ +\frac{\varepsilon C}{\varrho}\sup_{0\leq s\leq t}\|\nabla u^{r,\varepsilon}(s)\|^2_{L^{\infty}}\int^{t}_0|A(u^{\varepsilon}(s))-P_r A(u^{\varepsilon}(s))\|^2_Hds.
\end{eqnarray}

%\begin{eqnarray}\notag
%L_{2,1}(t)&\leq& \varepsilon \varrho \int^{t}_0 \| u^{\varepsilon}(s)-u^{r,\varepsilon}(s)\|^2_{H^1}ds\\ \notag
%&&\ + \frac{\varepsilon C}{\varrho}\int^{t}_0\int_{\mathbb{T}^d}|A(u^{\varepsilon}(s,x)) -A_r(u^{\varepsilon}(s,x))|^2|\nabla u^{r,\varepsilon}(s,x)|^2dxds\\ \notag
%&\leq& \varepsilon \varrho \int^{t}_0 \| u^{\varepsilon}(s)-u^{r,\varepsilon}(s)\|^2_{H^1}ds\\ \notag
%&&\ + \frac{\varepsilon C}{\varrho}\int^{t}_0\int_{\mathbb{T}^d}|\int_{\mathbb{T}^d}(P_0(x,z)-P_r(x,z))A(u^{\varepsilon}(s))(z)dz|^2|\nabla u^{r,\varepsilon}(s)|^2(x)dxds\\ \notag
%&\leq& \varepsilon \varrho \int^{t}_0 \| u^{\varepsilon}(s)-u^{r,\varepsilon}(s)\|^2_{H^1}ds\\ \notag
%&&\ + \frac{\varepsilon C}{\varrho}r^{2 \alpha_{\eta}}\int^{t}_0\int_{\mathbb{T}^d}\|A(u^{\varepsilon}(s))\|^2_{C^{\eta}(\mathbb{T}^d)}|\nabla u^{r,\varepsilon}(s)|^2(x)dxds\\ \notag
%&\leq& \frac{\varepsilon \varrho}{2} \int^{t}_0 \| u^{\varepsilon}(s)-u^{r,\varepsilon}(s)\|^2_{H^1}ds\\ \label{eqqq-32}
%&&\ + \frac{\varepsilon C}{\varrho}r^{2 \alpha_{\eta}}\int^{t}_0\|u^{\varepsilon}(s)\|^2_{H^1}(1+\|u^{\varepsilon}(s)\|^2_{C^{\eta}(\mathbb{T}^d)})ds.
%\end{eqnarray}
Utilizing the contraction property of the semigroup $P_r$, Cauchy-Schwarz inequality and Young's inequality, we get
\begin{eqnarray}\notag
L_{2,2}(t)&\leq& 2\varepsilon \int^{t}_0
\|A_r(u^{\varepsilon}(s))-A_r(u^{r,\varepsilon}(s))\|_{H}
\|u^{\varepsilon}(s)-u^{r,\varepsilon}(s)\|_{H^1} \|\nabla u^{r,\varepsilon}(s)\|_{L^{\infty}}ds\\ \notag
&\leq& 2\varepsilon C \int^{t}_0 \|u^{\varepsilon}(s)-u^{r,\varepsilon}(s)\|_H\|u^{\varepsilon}(s)-u^{r,\varepsilon}(s)\|_{H^1} \|\nabla u^{r,\varepsilon}(s)\|_{L^{\infty}}ds\\
\label{eqqq-33}
&\leq& \frac{\varepsilon \varrho }{2}\int^{t}_0 \|u^{\varepsilon}(s)-u^{r,\varepsilon}(s)\|^2_{H^1}ds+\frac{\varepsilon C}{\varrho}\int^{t}_0  \|\nabla u^{r,\varepsilon}(s)\|^2_{L^{\infty}}\|u^{\varepsilon}(s)-u^{r,\varepsilon}(s)\|^2_Hds.
\end{eqnarray}
Combing (\ref{eqqq-31})-(\ref{eqqq-33}), it follows that
\begin{eqnarray*}
L_2(t)&\leq& -\varepsilon \varrho \int^{t}_0 \| u^{\varepsilon}(s)-u^{r,\varepsilon}(s)\|^2_{H^1} ds+\frac{\varepsilon C}{\varrho}\sup_{0\leq s\leq t}\|\nabla u^{r,\varepsilon}(s)\|^2_{L^{\infty}}\int^{t}_0|A(u^{\varepsilon}(s))-P_r A(u^{\varepsilon}(s))\|^2_Hds\\
&&\ +\frac{\varepsilon C}{\varrho}\int^{t}_0  \|\nabla u^{r,\varepsilon}(s)\|^2_{L^{\infty}}\|u^{\varepsilon}(s)-u^{r,\varepsilon}(s)\|^2_Hds.
\end{eqnarray*}
Moreover, by Hypothesis H1, we obtain
\begin{eqnarray*}
L_4(t)\leq \varepsilon C\int^{t}_0\|u^{\varepsilon}(s)-u^{r,\varepsilon}(s)\|^2_{H}ds.
\end{eqnarray*}
Combing all the previous estimates, we deduce that
\begin{eqnarray*}
&&\|u^{\varepsilon}(t)-u^{r,\varepsilon}(t)\|^2_H\\
&\leq& \frac{\varepsilon C}{\varrho}\sup_{0\leq s\leq t}\|\nabla u^{r,\varepsilon}(s)\|^2_{L^{\infty}}\int^{t}_0|A(u^{\varepsilon}(s))-P_r A(u^{\varepsilon}(s))\|^2_Hds\\
&&\ +\int^{t}_0 \Big( \frac{\varepsilon C}{\varrho}\|\nabla u^{r,\varepsilon}(s)\|^2_{L^{\infty}}+ \varepsilon C+\frac{\varepsilon C}{\varrho}\Big) \|u^{\varepsilon}(s)-u^{r,\varepsilon}(s)\|^2_Hds+|L_3(t)|.
\end{eqnarray*}
By using Gronwall inequality, we get
\begin{eqnarray*}
&&\|u^{\varepsilon}(t)-u^{r,\varepsilon}(t)\|^2_H\\
&\leq& \Big[\frac{\varepsilon C}{\varrho}\sup_{0\leq s\leq t}\|\nabla u^{r,\varepsilon}(s)\|^2_{L^{\infty}}\int^{t}_0\|A(u^{\varepsilon}(s))-P_r A(u^{\varepsilon}(s))\|^2_Hds+|L_3(t)|\Big]\\
 &&\ \times \exp\Big\{\frac{\varepsilon C}{\varrho} t+\frac{ C }{\varrho^2}\sup_{0\leq s\leq t}\|\nabla u^{r,\varepsilon}(s)\|^2_{L^{\infty}}+ \varepsilon C t\Big\}.
\end{eqnarray*}
Using the similar arguments as (\ref{eqqq-3}), we deduce that
\begin{eqnarray*}
&&\Big(E\sup_{0\leq s\leq 1}\|u^{\varepsilon}(s)-u^{r,\varepsilon}(s)\|^{2p}_H\Big)^{\frac{2}{p}}\\
&\leq& \Big[\frac{\varepsilon^2 C}{\varrho^2}\Big(E(\sup_{0\leq s\leq 1}\|\nabla u^{r,\varepsilon}(s)\|^{2p}_{L^{\infty}})\Big)^{\frac{2}{p}}\Big(E(\int^{1}_0\|A(u^{\varepsilon}(s))-P_r A(u^{\varepsilon}(s))\|^2_Hds)^{2p}\Big)^{\frac{2}{p}}\Big]\\
&&\ \times \exp\Big\{\frac{2\varepsilon C}{\varrho} +\frac{ 2C }{\varrho^2}\Big(E\sup_{0\leq s\leq 1}\|\nabla u^{r,\varepsilon}(s)\|^{2p}_{L^{\infty}}\Big)^{\frac{2}{p}}+ 2\varepsilon C \Big\} \\
&&\ +8C\varepsilon p\exp\Big\{\frac{2\varepsilon C}{\varrho} +\frac{2 C }{\varrho^2}\Big(E\sup_{0\leq s\leq 1}\|\nabla u^{r,\varepsilon}(s)\|^{2p}_{L^{\infty}}\Big)^{\frac{2}{p}}+ 2\varepsilon C\Big\} \int^1_0\Big(E\sup_{0\leq l\leq s}\|u^{\varepsilon}(l)-u^{r,\varepsilon}(l)\|^{2p}_H\Big)^{\frac{2}{p}}ds.
\end{eqnarray*}
Applying Gronwall inequality, we obtain
\begin{eqnarray}\notag
&&\Big(E\sup_{0\leq s\leq 1}\|u^{\varepsilon}(s)-u^{r,\varepsilon}(s)\|^{2p}_H\Big)^{\frac{2}{p}}\\
\notag
&\leq&\Big[\frac{\varepsilon^2 C}{\varrho^2}\Big(E\Big(\sup_{0\leq s\leq 1}\|\nabla u^{r,\varepsilon}(s)\|^{2p}_{L^{\infty}}\Big)\Big)^{\frac{2}{p}}\Big(E\Big(\int^{1}_0\|A(u^{\varepsilon}(s))-P_r A(u^{\varepsilon}(s))\|^2_Hds\Big)^{2p}\Big)^{\frac{2}{p}}\Big]\\
\label{eqqq-28}
&&\ \times \exp\Big\{\frac{2\varepsilon C}{\varrho} +\frac{ 2C }{\varrho^2}\Big(E\sup_{0\leq s\leq 1}\|\nabla u^{r,\varepsilon}(s)\|^{2p}_{L^{\infty}}\Big)^{\frac{2}{p}}+ 2\varepsilon C \Big\}e^{C(\varepsilon, p,  \varrho)},
\end{eqnarray}
where
\begin{eqnarray}\label{eqqq-42}
C(\varepsilon, p, \varrho)=8C\varepsilon p\exp\Big\{\frac{2\varepsilon C}{\varrho} +\frac{2\varepsilon C}{\varrho}\Big(E\sup_{0\leq s\leq 1}\|\nabla u^{r,\varepsilon}(s)\|^{2p}_{L^{\infty}}\Big)^{\frac{2}{p}}+ 2\varepsilon C \Big\}.
\end{eqnarray}

%From Lemma \ref{lemm-1}, we know that for any $R>0$, there exists a $M$ such that
%\begin{eqnarray}\label{eqqq-29}
%\sup_{0<\varepsilon\leq 1} \varepsilon \log P(\varepsilon \varrho\int^1_0 \|u^{\varepsilon}(s)\|^2_{H^1}ds> M)\leq -R.
%\end{eqnarray}
%For such a constant $M$,
Let $p=\frac{2}{\varepsilon}$ in (\ref{eqqq-28}) to obtain
\begin{eqnarray*}
&&\varepsilon \log P\Big(\sup_{0\leq s\leq 1}\|u^{\varepsilon}(s)-u^{r,\varepsilon}(s)\|^2_H>\delta \Big)\\ \notag
&\leq & \log \Big(E\sup_{0\leq s\leq 1}\|u^{\varepsilon}(s)-u^{r,\varepsilon}(s)\|^{2p}_H\Big)^{\frac{2}{p}}-2\log \delta\\ \notag
&\leq& \frac{2 \varepsilon C}{\varrho} +\frac{ C }{\varrho^2}\Big(E\sup_{0\leq s\leq 1}\|\nabla u^{r,\varepsilon}(s)\|^{2p}_{L^{\infty}}\Big)^{\frac{2}{p}}+ \varepsilon C\\
&&\ +\log\Big[\frac{\varepsilon^2 C}{\varrho^2}\Big(E\sup_{0\leq s\leq 1}\|\nabla u^{r,\varepsilon}(s)\|^{4p}_{L^{\infty}}\Big)^{\frac{1}{p}}\Big(E(\int^{1}_0\|A(u^{\varepsilon}(s))-P_r A(u^{\varepsilon}(s))\|^2_Hds)^{2p}\Big)^{\frac{1}{p}}\Big]\\ \notag
&&\ +C(\varepsilon, p,  \varrho)-2\log \delta,
\end{eqnarray*}
which implies that
\begin{eqnarray}\notag
&&\sup_{0<\varepsilon\leq 1}\varepsilon \log P\Big(\sup_{0\leq s\leq 1}\|u^{\varepsilon}(s)-u^{r,\varepsilon}(s)\|^2_H>\delta \Big)\\ \notag
&\leq& \log\Big[\frac{ C}{\varrho^2} \sup_{0<\varepsilon\leq 1}\Big(E\sup_{0\leq s\leq 1}\|\nabla u^{r,\varepsilon}(s)\|^{2p}_{L^{\infty}}\Big)^{\frac{2}{p}}\Big(E\Big(\int^{1}_0\|A(u^{\varepsilon}(s))-P_r A(u^{\varepsilon}(s))\|^2_Hds\Big)^{2p}\Big)^{\frac{2}{p}}\Big]\\
\label{eqqq-30}
&&\ +\frac{2 C}{\varrho} +\frac{ C }{\varrho^2}\sup_{0<\varepsilon\leq 1}\Big(E\sup_{0\leq s\leq 1}\|\nabla u^{r,\varepsilon}(s)\|^{2p}_{L^{\infty}}\Big)^{\frac{2}{p}}+ C+\sup_{0<\varepsilon\leq 1}C(\varepsilon, p, \varrho)-2\log \delta.
\end{eqnarray}
According to (\ref{eqq-3}) and by (\ref{eqqq-42}), it follows that
\begin{eqnarray}\label{eqqq-40}
\sup_{0<\varepsilon\leq 1}\Big(E\sup_{0\leq s\leq 1}\|\nabla u^{r,\varepsilon}(s)\|^{2p}_{L^{\infty}}\Big)^{\frac{2}{p}}<\infty, \quad {\rm{and}}\  \sup_{0<\varepsilon\leq 1}C(\varepsilon, p, \varrho)<\infty.
\end{eqnarray}
By the strong continuity of the semigroup $P_r$ and the boundness of $A(u^{\varepsilon})$, we have
\begin{eqnarray}\label{eqqq-41}
\Big(E\Big(\int^{1}_0\|A(u^{\varepsilon}(s))-P_r A(u^{\varepsilon}(s))\|^2_Hds\Big)^{2p}\Big)^{\frac{2}{p}}\rightarrow 0, \quad r\rightarrow 0.
\end{eqnarray}
Combing (\ref{eqqq-30})-(\ref{eqqq-41}), we deduce that
\begin{eqnarray*}\notag
\lim_{r\rightarrow 0}\sup_{0<\varepsilon\leq 1}\varepsilon \log P\Big(\sup_{0\leq s\leq 1}\|u^{\varepsilon}(s)-u^{r,\varepsilon}(s)\|^2_H>\delta \Big)
=-\infty.
\end{eqnarray*}
We complete the proof.
%Thus, we deduce that there exists a $r_0$ such that for any $0<r\leq r_0$,
%\begin{eqnarray}\label{eqqq-38}
%\sup_{0<\varepsilon\leq 1}\varepsilon \log P\Big(\sup_{0\leq s\leq 1}\|u^{\varepsilon}(s)-u^{r,\varepsilon}(s)\|^2_H>\delta \Big)\leq  e^{-\frac{R}{\varepsilon}},
%\end{eqnarray}
%which implies that for any $0<r\leq r_0$, $\forall \varepsilon\in (0,1]$, it holds that
%\begin{eqnarray*}
%P\Big(\sup_{0\leq s\leq 1}\|u^{\varepsilon}(s)-u^{r,\varepsilon}(s)\|^2_H>\delta\Big)\leq 2e^{-\frac{R}{\varepsilon}}.
%\end{eqnarray*}
%Since $R$ is arbitrary, we complete the proof.

\end{proof}
Now, we can conclude the following result.
\begin{prp}\label{prp-0}
For any $f \in  C^{1+l}(\mathbb{T}^d)$ for some $l>0$. Under Hypotheses H1-H3, (\ref{eq-8}) holds, which implies the large deviation principles holds.
\end{prp}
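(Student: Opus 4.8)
The plan is to assemble (\ref{eq-8}) from the four approximation lemmas of this section by threading the intermediate processes $u^{r,\varepsilon}$, $u^{r,\varepsilon}_n$ and $v^{\varepsilon}_n$ between $u^{\varepsilon}$ and $v^{\varepsilon}$. First I would write
\[
u^{\varepsilon}-v^{\varepsilon}=(u^{\varepsilon}-u^{r,\varepsilon})+(u^{r,\varepsilon}-u^{r,\varepsilon}_n)+(u^{r,\varepsilon}_n-v^{\varepsilon}_n)+(v^{\varepsilon}_n-v^{\varepsilon}),
\]
and use $\|a+b+c+d\|^2_H\le 4(\|a\|^2_H+\|b\|^2_H+\|c\|^2_H+\|d\|^2_H)$ together with the monotonicity of the supremum to get
\[
\Big\{\sup_{0\le t\le1}\|u^{\varepsilon}(t)-v^{\varepsilon}(t)\|^2_H>\delta\Big\}\subseteq E_1\cup E_2\cup E_3\cup E_4,
\]
where $E_i$ is the event that the $\sup_t\|\cdot\|^2_H$ of the $i$-th difference exceeds $\delta/16$. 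Since $P(E_1\cup\cdots\cup E_4)\le 4\max_i P(E_i)$, this yields, for every $\varepsilon\in(0,1]$,
\[
\varepsilon\log P\Big(\sup_{0\le t\le1}\|u^{\varepsilon}(t)-v^{\varepsilon}(t)\|^2_H>\delta\Big)\le \varepsilon\log 4+\max_{1\le i\le4}\varepsilon\log P(E_i).
\]

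Next I would fix an arbitrary $R>0$ and aim at $\limsup_{\varepsilon\to0}\varepsilon\log P(\cdots)\le -R$; since $R$ is arbitrary this proves (\ref{eq-8}). The parameters are chosen in a fixed order. First apply Lemma \ref{lemm-6} (this is where the stronger hypothesis $f\in C^{1+l}(\mathbb{T}^d)$ is used, via the gradient bound (\ref{eqq-3})) and fix $r>0$ so small that $\sup_{0<\varepsilon\le1}\varepsilon\log P(E_1)\le -R$. With $r$ now frozen, apply Lemma \ref{lemm-3} and Lemma \ref{lemm-4} and fix $n$ so large that both $\sup_{0<\varepsilon\le1}\varepsilon\log P(E_2)\le -R$ and $\sup_{0<\varepsilon\le1}\varepsilon\log P(E_4)\le -R$ (taking the larger of the two indices; note Lemma \ref{lemm-4} does not involve $r$). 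Finally, with $r$ and $n$ both frozen, apply Lemma \ref{lemm-5}: there is $\varepsilon_0>0$ with $\varepsilon\log P(E_3)\le -R$ for all $\varepsilon\in(0,\varepsilon_0]$. Feeding these four bounds into the displayed inequality gives $\varepsilon\log P(\sup_t\|u^{\varepsilon}-v^{\varepsilon}\|^2_H>\delta)\le\varepsilon\log4-R$ for $\varepsilon\in(0,\varepsilon_0]$, and letting $\varepsilon\to0$ yields $\limsup_{\varepsilon\to0}\varepsilon\log P(\cdots)\le -R$. Hence (\ref{eq-8}) holds, which by the reduction in the proof of Theorem \ref{thm-1} gives the large deviation principle under the stronger initial condition.

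I expect the main obstacle to be purely bookkeeping rather than analytic: all four tail estimates are already supplied by the lemmas, so the only delicate point is the \emph{order} in which the limits are taken, namely $r\to0$, then $n\to\infty$, then $\varepsilon\to0$. This order is legitimate precisely because Lemmas \ref{lemm-6}, \ref{lemm-3} and \ref{lemm-4} are uniform over $\varepsilon\in(0,1]$, so $r$ and $n$ can be fixed \emph{before} $\varepsilon$ is sent to zero, whereas Lemma \ref{lemm-5}—the one that genuinely exploits $\varepsilon\to0$ (the prefactor $\varepsilon^2$ defeating the $e^{8\varepsilon pC}$ factor only after setting $p=2/\varepsilon$)—must be the last step. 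A secondary point to verify while assembling the argument is that the stopping-time localizations in Lemmas \ref{lemm-3} and \ref{lemm-5} are absorbed by the uniform energy-ball estimates of Lemma \ref{lemm-2} and of (\ref{e-4})/Lemma \ref{lemm-7}; this compatibility has already been built into the statements of those lemmas, so here it only remains to combine the pieces.
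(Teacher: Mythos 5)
Your proposal is correct and follows essentially the same route as the paper: the same four-term decomposition through $u^{r,\varepsilon}$, $u^{r,\varepsilon}_n$, $v^{\varepsilon}_n$, and the same order of parameter choices ($r$ via Lemma \ref{lemm-6}, then $n$ via Lemmas \ref{lemm-3} and \ref{lemm-4}, then $\varepsilon$ via Lemma \ref{lemm-5}), concluding by arbitrariness of $R$. Your use of the threshold $\delta/16$ is in fact slightly more careful than the paper's $\delta/4$, but this is only a harmless constant.
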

\begin{proof}
Note that
\begin{eqnarray}\notag
&&P\Big( \sup_{0\leq s\leq 1}\|u^{\varepsilon}(s)-v^{\varepsilon}(s)\|^2_H>\delta\Big)\\
\notag
&\leq& P\Big( \sup_{0\leq s\leq 1}\|u^{\varepsilon}(s)-u^{r,\varepsilon}(s)\|^2_H>\frac{\delta}{4}\Big)
+P\Big( \sup_{0\leq s\leq 1}\|u^{r,\varepsilon}(s)-u^{r,\varepsilon}_n(s)\|^2_H>\frac{\delta}{4}\Big)\\
\label{eqqq-37}
&& +P\Big( \sup_{0\leq s\leq 1}\|u^{r,\varepsilon}_n(s)-v^{\varepsilon}_n(s)\|^2_H>\frac{\delta}{4}\Big)
+P\Big( \sup_{0\leq s\leq 1}\|v^{\varepsilon}_n(s)-v^{\varepsilon}(s)\|^2_H>\frac{\delta}{4}\Big).
\end{eqnarray}
From Lemma \ref{lemm-6}, we have that for any $R>0$, there exists a $r_0$ such that
\begin{eqnarray*}
P\Big( \sup_{0\leq s\leq 1}\|u^{\varepsilon}(s)-u^{r_0,\varepsilon}(s)\|^2_H>\frac{\delta}{4}\Big)\leq e^{-\frac{R}{\varepsilon}}, \ \forall \varepsilon\in (0,1].
\end{eqnarray*}
In view of Lemma \ref{lemm-3} and Lemma \ref{lemm-4}, for such $r_0$, there exists  $N_0$ such that
\begin{eqnarray*}
P\Big( \sup_{0\leq s\leq 1}\|u^{r_0,\varepsilon}(s)-u^{r_0,\varepsilon}_{N_0}(s)\|^2_H>\frac{\delta}{4}\Big)\leq e^{-\frac{R}{\varepsilon}}, \ \forall \varepsilon\in (0,1],
\end{eqnarray*}
and
\begin{eqnarray*}
P\Big( \sup_{0\leq s\leq 1}\|v^{\varepsilon}_{N_0}(s)-v^{\varepsilon}(s)\|^2_H>\frac{\delta}{4}\Big)\leq e^{-\frac{R}{\varepsilon}}, \ \forall \varepsilon\in (0,1].
\end{eqnarray*}
From Lemma \ref{lemm-5}, for such $r_0$ and $N_0$, there exists  $\varepsilon_0$ such that for any $\varepsilon\in (0,\varepsilon_0]$,
\begin{eqnarray*}
P\Big( \sup_{0\leq s\leq 1}\|u^{r_0,\varepsilon}_{N_0}(s)-v^{\varepsilon}_{N_0}(s)\|^2_H>\frac{\delta}{4}\Big)\leq e^{-\frac{R}{\varepsilon}}.
\end{eqnarray*}
Thus, combing all the previous estimates and by (\ref{eqqq-37}), we have that for any $\varepsilon\in (0,\varepsilon_0]$,
\begin{eqnarray*}
P\Big( \sup_{0\leq s\leq 1}\|u^{\varepsilon}(s)-v^{\varepsilon}(s)\|^2_H>\delta\Big)\leq 4e^{-\frac{R}{\varepsilon}}.
\end{eqnarray*}
Since $R$ is arbitrary, we conclude that
\begin{eqnarray*}
\lim_{\varepsilon\rightarrow 0}\varepsilon \log P\Big( \sup_{0\leq s\leq 1}\|u^{\varepsilon}(s)-v^{\varepsilon}(s)\|^2_H>\delta\Big)=-\infty,
\end{eqnarray*}
which implies (\ref{eq-8}) holds.
\end{proof}

\section{Proof of (\ref{eq-8}) without the stronger condition }\label{section-2}
In this part, we devote to relaxing the condition imposed on the initial value $f$. It reads as follows.
\begin{prp}\label{prp-2}
Let the initial value $f \in  L^p(\mathbb{T}^d)$ for all $p\in [1, \infty)$.
Under Hypotheses H1-H3, (\ref{eq-8}) holds.
\end{prp}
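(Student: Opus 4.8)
The strategy is to remove the regularity hypothesis on $f$ by a density argument, reducing the claim to Proposition \ref{prp-0} together with one additional exponential continuity estimate with respect to the initial datum. Fix $f\in L^p(\mathbb T^d)$ for all $p\in[1,\infty)$ and, say by mollification, pick $f_n\in C^{1+l}(\mathbb T^d)$ with $f_n\to f$ in $L^p(\mathbb T^d)$ for every $p$; in particular $\|f-f_n\|_H\to0$. Let $u^{\varepsilon}_n$, $v^{\varepsilon}_n$, $u^{r,\varepsilon}_n$ denote the solutions of (\ref{eqq-5}), (\ref{eq-7}) and (\ref{eqq-2}) respectively, all with initial value $f_n$. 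By the triangle inequality, for every $\delta>0$ the probability $P(\sup_{0\le t\le1}\|u^{\varepsilon}(t)-v^{\varepsilon}(t)\|_H^2>\delta)$ is bounded by $P(\sup_t\|u^{\varepsilon}-u^{\varepsilon}_n\|_H^2>\delta/3)+P(\sup_t\|u^{\varepsilon}_n-v^{\varepsilon}_n\|_H^2>\delta/3)+P(\sup_t\|v^{\varepsilon}_n-v^{\varepsilon}\|_H^2>\delta/3)$. Since $f_n\in C^{1+l}$, Proposition \ref{prp-0} applies with initial value $f_n$ and yields $\lim_{\varepsilon\to0}\varepsilon\log P(\sup_t\|u^{\varepsilon}_n-v^{\varepsilon}_n\|_H^2>\delta/3)=-\infty$ for each fixed $n$, while Lemma \ref{lemm-4} gives $\lim_{n\to\infty}\sup_{0<\varepsilon\le1}\varepsilon\log P(\sup_t\|v^{\varepsilon}_n-v^{\varepsilon}\|_H^2>\delta/3)=-\infty$. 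Thus (\ref{eq-8}) will follow once we establish
\[
\lim_{n\to\infty}\ \sup_{0<\varepsilon\le1}\ \varepsilon\log P\Big(\sup_{0\le t\le1}\|u^{\varepsilon}(t)-u^{\varepsilon}_n(t)\|_H^2>\delta\Big)=-\infty .
\]

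This last display is the substantive point, and it is here that the non-monotonicity of $div(A(u)\nabla u)$ causes trouble: a direct It\^{o} estimate on $\|u^{\varepsilon}-u^{\varepsilon}_n\|_H^2$ produces the term $\langle\nabla(u^{\varepsilon}-u^{\varepsilon}_n),(A(u^{\varepsilon})-A(u^{\varepsilon}_n))\nabla u^{\varepsilon}\rangle$, which requires a bound on $\|\nabla u^{\varepsilon}\|_{L^\infty}$ that is unavailable for $f$ merely in $L^p$. I would circumvent this by interposing the regularized process $u^{r,\varepsilon}_n$ and writing $\|u^{\varepsilon}-u^{\varepsilon}_n\|_H\le\|u^{\varepsilon}-u^{r,\varepsilon}_n\|_H+\|u^{r,\varepsilon}_n-u^{\varepsilon}_n\|_H$. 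The term $\|u^{r,\varepsilon}_n-u^{\varepsilon}_n\|_H$ is controlled by Lemma \ref{lemm-6} applied with initial value $f_n\in C^{1+l}$: for each $n$, $\lim_{r\to0}\sup_{0<\varepsilon\le1}\varepsilon\log P(\sup_t\|u^{r,\varepsilon}_n-u^{\varepsilon}_n\|_H^2>\eta)=-\infty$ for every $\eta>0$. For $\|u^{\varepsilon}-u^{r,\varepsilon}_n\|_H$ I would run an It\^{o} estimate on $\|w(t)\|_H^2$ with $w:=u^{\varepsilon}-u^{r,\varepsilon}_n$, following Lemmas \ref{lemm-3} and \ref{lemm-6}. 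After integration by parts the diffusion contribution splits as $-2\varepsilon\langle\nabla w,A(u^{\varepsilon})\nabla w\rangle-2\varepsilon\langle\nabla w,(A(u^{\varepsilon})-A_r(u^{r,\varepsilon}_n))\nabla u^{r,\varepsilon}_n\rangle$; the first piece is $\le-2\varepsilon\varrho\|w\|_{H^1}^2$ by the uniform ellipticity of $A$, and in the second one writes $A(u^{\varepsilon})-A_r(u^{r,\varepsilon}_n)=(A(u^{\varepsilon})-A(u^{r,\varepsilon}_n))+(I-P_r)A(u^{r,\varepsilon}_n)$, bounding the first summand by $C|w|$ pointwise and the second by $\|(I-P_r)A(u^{r,\varepsilon}_n)\|_H\le C\sqrt r\,\|\nabla A(u^{r,\varepsilon}_n)\|_H\le C\sqrt r\,\|u^{r,\varepsilon}_n\|_{H^1}$, and in both places letting the remaining gradient factor fall on $\nabla u^{r,\varepsilon}_n$, whose $L^\infty$-norm is governed by (\ref{eqq-3}) precisely because $f_n\in C^{1+l}$. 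Treating the $B$- and $\sigma$-terms as in Lemma \ref{lemm-1}, absorbing the $\|w\|_{H^1}^2$ contributions, applying the exponential-weight device of Lemma \ref{lemm-3} (It\^{o} applied to $e^{-k\varepsilon\int_0^t\|\nabla u^{r,\varepsilon}_n(s)\|_{L^\infty}^2\,ds}\|w(t)\|_H^2$ with $k$ large) followed by Gronwall's inequality, localising by the stopping time $\tau_M$ at which $(|u^{\varepsilon}|^H_{H^1}(t))^2$, $(|u^{r,\varepsilon}_n|^H_{H^1}(t))^2$ or $\int_0^t\|\nabla u^{r,\varepsilon}_n(s)\|_{L^\infty}^2\,ds$ first exceeds $M$, undoing the weight on $[0,1\wedge\tau_M]$ at the bounded cost $e^{\varepsilon kM}$, estimating the martingale by (\ref{eqqq-2}), and taking $p=2/\varepsilon$, one reaches a bound of the form
\[
\varepsilon\log P\Big(\sup_{0\le t\le1\wedge\tau_M}\|u^{\varepsilon}(t)-u^{r,\varepsilon}_n(t)\|_H^2>\eta\Big)\le 2kM+C(\varrho)+\log\big(\|f-f_n\|_H^4+C(M)\,r\big)-2\log\eta ,
\]
while, for $M$ large, $P(\tau_M<1)$ is at most a fixed multiple of $e^{-R/\varepsilon}$ by Lemma \ref{lemm-1}, the analogue of Lemma \ref{lemm-2} for the initial value $f_n$, and (\ref{eqq-3}).

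It remains to arrange the limits. Given $R>0$, first fix $M=M(R)$ so large that $P(\tau_M<1)\le c\,e^{-R/\varepsilon}$ uniformly in $\varepsilon\in(0,1]$ and $r>0$; the constant $2kM+C(\varrho)$ is then frozen. For each $n$ pick $r_n>0$ small enough that $C(M)\,r_n\le\|f-f_n\|_H^4$ and that Lemma \ref{lemm-6} already delivers $\sup_{0<\varepsilon\le1}\varepsilon\log P(\sup_t\|u^{r_n,\varepsilon}_n-u^{\varepsilon}_n\|_H^2>\eta)\le-R$. Letting $n\to\infty$ (with $M$, hence $C(M)$, kept fixed), the term $\log(\|f-f_n\|_H^4+C(M)r_n)\le\log(2\|f-f_n\|_H^4)$ tends to $-\infty$, so for all large $n$ the displayed quantity above is $\le-R$; combining it with the $e^{-R/\varepsilon}$ bounds for $P(\tau_M<1)$ and for the difference $u^{r_n,\varepsilon}_n-u^{\varepsilon}_n$ yields $\sup_{0<\varepsilon\le1}\varepsilon\log P(\sup_t\|u^{\varepsilon}-u^{\varepsilon}_n\|_H^2>\delta)\le-R+O(1)$ for all large $n$, and as $R$ is arbitrary this proves the second display of the first paragraph. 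Combined with the two estimates recalled there, this gives (\ref{eq-8}), which — exactly as in the proofs of Theorem \ref{thm-1} and Proposition \ref{prp-0} — implies the large deviation principle. The main obstacle is the estimate of $\|u^{\varepsilon}-u^{\varepsilon}_n\|_H$: the non-monotone quasilinear term forces the detour through the heat-kernel regularization and the placement of every gradient factor onto the $C^{1+l}$-initialised regular solution $u^{r,\varepsilon}_n$, and, since the bound (\ref{eqq-3}) for $f_n$ deteriorates as $\|f_n\|_{C^{1+l}}\to\infty$, the parameter $r_n$ must be chosen after $n$ while $M$ — hence the constant it produces — must be chosen before $n$.
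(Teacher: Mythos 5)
Your outer reduction (triangle inequality through $u^{\varepsilon}_n$ and $v^{\varepsilon}_n$, Proposition \ref{prp-0} for the middle difference, Lemma \ref{lemm-4} for $v^{\varepsilon}_n-v^{\varepsilon}$) coincides with the paper's, and you correctly identify that everything hinges on the exponential estimate for $\sup_t\|u^{\varepsilon}(t)-u^{\varepsilon}_n(t)\|_H^2$. But your treatment of that term has a genuine gap. You interpose $u^{r,\varepsilon}_n$ and localize by a stopping time $\tau_M$ whose definition involves $\int_0^t\|\nabla u^{r,\varepsilon}_n(s)\|^2_{L^\infty}ds$ (and, as your absorption of the $\sqrt r$-term actually requires, $\sup_t\|\nabla u^{r,\varepsilon}_n(t)\|^2_{L^\infty}$), and you then need $P(\tau_M<1)\le c\,e^{-R/\varepsilon}$ uniformly in $\varepsilon\in(0,1]$, in $r$, and crucially in $n$, with $M=M(R)$ chosen \emph{before} $n$. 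Neither ingredient you cite delivers this. The bound (\ref{eqq-3}) gives, for each fixed $p$, a moment bound whose constant depends on $p$ in an uncontrolled way and on the datum through $\|f_n\|_{C^{1+l}}$; by Chebyshev it yields only $P(\sup_t\|\nabla u^{r,\varepsilon}_n\|^2_{L^\infty}>M)\le C(p,n)M^{-p/2}$, a bound independent of $\varepsilon$, so $\sup_{0<\varepsilon\le1}\varepsilon\log P(\cdot)$ can at best be pushed to $0$, never to $-R$ (the $e^{-R/\varepsilon}$ decay in Lemmas \ref{lemm-1}--\ref{lemm-2} comes from taking $p\sim1/\varepsilon$ with the explicit $\sqrt p$ growth in (\ref{eqqq-2}), which is not available for the $L^\infty$-gradient functional). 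Worse, uniformity in $n$ is actually false: since $f$ is only in $L^p$, any admissible smooth approximation has $\|\nabla f_n\|_{L^\infty}\to\infty$, so for fixed $M$ one has $\sup_t\|\nabla u^{r,\varepsilon}_n(t)\|^2_{L^\infty}\ge\|\nabla f_n\|^2_{L^\infty}>M$ for all large $n$, hence $\tau_M=0$ and $P(\tau_M<1)=1$; your localized estimate then says nothing, and choosing $r_n$ after $n$ cannot repair this because the offending constant sits in the localization, not in the $\sqrt r$ remainder. Moving the gradient factor into the expectation instead (as in Lemma \ref{lemm-6}) does not help either: the resulting constant blows up with $n$ and is not compensated by $\log\|f-f_n\|_H^4\to-\infty$.

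The paper closes this gap by a different mechanism that never uses $L^\infty$ gradient bounds for the approximating solutions of the original equation. It first proves an $L^1$-contraction-type stability estimate uniform in $\varepsilon$ (Lemma \ref{lem-2}), exploiting the quasilinear structure through the convex approximations $\phi_m$ of $|x|$, which gives $\sup_{0<\varepsilon\le1}E\int_0^1\|u^{\varepsilon,k}-u^{\varepsilon}\|_{L^1}dt\le\|f_k-f\|_{L^1}$; combined with the uniform moment bounds (\ref{eqqq-44-1})--(\ref{eqqq-58-1}) and Vitali's theorem this upgrades to convergence in $L^{2p}(\Omega\times[0,1]\times\mathbb{T}^d)$ uniformly in $\varepsilon$. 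In the It\^{o} estimate for $\|u^{\varepsilon}-u^{\varepsilon,k}\|_H^2$ the dangerous cross term is then isolated as $N^{\varepsilon,k}(t)=\frac{\varepsilon C}{\varrho}\int_0^t\int|A(u^{\varepsilon})-A(u^{\varepsilon,k})|^2|\nabla u^{\varepsilon,k}|^2dxds$ and controlled by splitting $\{|\nabla u^{\varepsilon,k}|\le R\}$ and $\{|\nabla u^{\varepsilon,k}|>R\}$: the first piece is small by the strong $L^{2p}$ convergence, the second by the uniform $p$-moments of $\int_0^1\|u^{\varepsilon,k}\|^2_{H^1}dt$ and boundedness of $A$, with $R$ chosen first and $k$ after. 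If you want to salvage your argument you would need to replace your localization step by something of this type; as written, the step asserting $P(\tau_M<1)\le c\,e^{-R/\varepsilon}$ ``by (\ref{eqq-3})'' does not hold.
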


Before giving the proof of Proposition \ref{prp-2}, we firstly establish a lemma.

 \

 For any initial value $f \in  L^p(\mathbb{T}^d)$ for all $p\in [1, \infty)$, we denote by $u^{\varepsilon}$ the solution of (\ref{eqq-5}) with respect to the initial value $f$ and diffusion coefficient $\sigma$ satisfying Hypotheses H1-H3.
Let $\{f_k\}_{k\geq 1}\subset C^{\infty}(\mathbb{T}^d)$ be a sequence satisfies
$\|f_k-f\|^2_{L^p}\rightarrow 0$, as $k\rightarrow \infty$. Denote by $u^{\varepsilon,k}$ the solution of (\ref{eqq-5}) with respect to the initial value $f_k$ and the same diffusion coefficient $\sigma$ as above.
Under Hypotheses H1-H2, using the same method as the proof of Theorem 4.2 and Proposition 4.3 in \cite{H-Z}, we obtain
\begin{eqnarray}\label{eqqq-44-1}
\sup_{0<\varepsilon\leq 1}\Big\{E\sup_{t\in [0,1]}\|u^{\varepsilon,k}(t)\|^2_{H}+\int^1_0E\|u^{\varepsilon, k}(t)\|^2_{H^1}dt\Big\}&<&\infty,\\
\label{eqqq-45-1}
\sup_{0<\varepsilon\leq 1}E\sup_{t\in [0,1]}\|u^{\varepsilon, k}(t)\|^p_{L^p}&<&\infty.
\end{eqnarray}
Moreover, under Hypothesis H1, similar to Lemma \ref{lem-1}, we deduce that for any $p\in[1,\infty)$,
\begin{eqnarray}\label{eqqq-67-1}
\sup_{0<\varepsilon\leq 1}\Big\{E\sup_{0\leq t\leq 1}\|u^{\varepsilon}(t)\|^{2p}_H+E\int^1_0\|u^{\varepsilon}(t)\|^{2(p-1)}_H \|u^{\varepsilon}(t)\|^2_{H^1}dt\Big\}<\infty,
\end{eqnarray}
and
\begin{eqnarray}\label{eqqq-58-1}
\sup_{0<\varepsilon\leq 1}E\Big(\int^1_0\|u^{\varepsilon}(t)\|^2_{H^1}dt\Big)^p<\infty.
\end{eqnarray}
We claim that
\begin{lemma}\label{lem-2}
\begin{eqnarray}
\lim_{k\rightarrow \infty}\sup_{0<\varepsilon\leq 1}\|u^{\varepsilon,k}-u^{\varepsilon}\|_{L^1(\Omega\times [0,1]\times \mathbb{T}^d)}=0.
\end{eqnarray}
\end{lemma}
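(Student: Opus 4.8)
The plan is to prove an $\varepsilon$-uniform $L^1$-stability estimate for (\ref{eqq-5}) with respect to the initial datum and then to exploit the density of $C^\infty(\mathbb{T}^d)$ in $L^p(\mathbb{T}^d)$. The key observation is that $u^{\varepsilon,k}$ and $u^{\varepsilon}$ solve the \emph{same} equation (\ref{eqq-5}), driven by the \emph{same} cylindrical Wiener process, and differ only through their initial values $f_k$ and $f$; moreover, by the uniform a priori bounds (\ref{eqqq-44-1})--(\ref{eqqq-45-1}) and (\ref{eqqq-67-1})--(\ref{eqqq-58-1}), the families $\{u^{\varepsilon,k}\}$ and $\{u^{\varepsilon}\}$ are bounded in $L^2(\Omega;C([0,1],H))\cap L^2(\Omega;L^2(0,1;H^1))$ and in $L^p(\Omega;C([0,1],L^p(\mathbb{T}^d)))$ uniformly in $\varepsilon\in(0,1]$ and in $k$ (recall also $\sup_k\|f_k\|_{L^p}<\infty$). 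In particular both are pathwise weak solutions of (\ref{eqq-5}), to which the $L^1$-stability theory of \cite{DHV,H-Z} underlying Theorem \ref{thm-1-1} applies.

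First I would run the $L^1$-stability argument of \cite{DHV} (the doubling-of-variables / Kruzhkov device, also used in \cite{H-Z} to treat rough initial data), keeping careful track of the $\varepsilon$-scaling of (\ref{eqq-5}). Regularizing $|\cdot|$ by smooth convex functions and testing the equation satisfied by $u^{\varepsilon,k}-u^{\varepsilon}$, one checks that: the flux term $\varepsilon\,\mathrm{div}(B(u^{\varepsilon,k})-B(u^{\varepsilon}))$ contributes nothing in the limit (Kruzhkov cancellation, using only $B\in C^1_{lip}$); the uniformly elliptic second-order term $\varepsilon\,\mathrm{div}(A(u^{\varepsilon,k})\nabla u^{\varepsilon,k}-A(u^{\varepsilon})\nabla u^{\varepsilon})$ produces a nonpositive dissipation that \emph{absorbs} the cross term generated by the nonlinearity of $A$; and the stochastic term, by Hypothesis H1, contributes a martingale plus an It\^o correction controlled by $\varepsilon\sum_k|\sigma_k(\varepsilon s,u^{\varepsilon,k})-\sigma_k(\varepsilon s,u^{\varepsilon})|^2\le \varepsilon C|u^{\varepsilon,k}-u^{\varepsilon}|^2$. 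This leads to
\[
E\|u^{\varepsilon,k}(t)-u^{\varepsilon}(t)\|_{L^1(\mathbb{T}^d)}\le \|f_k-f\|_{L^1(\mathbb{T}^d)}+\varepsilon C\int_0^t E\|u^{\varepsilon,k}(s)-u^{\varepsilon}(s)\|_{L^1(\mathbb{T}^d)}\,ds,
\]
and Gronwall's inequality gives $E\|u^{\varepsilon,k}(t)-u^{\varepsilon}(t)\|_{L^1(\mathbb{T}^d)}\le e^{\varepsilon C t}\|f_k-f\|_{L^1(\mathbb{T}^d)}\le e^{C}\|f_k-f\|_{L^1(\mathbb{T}^d)}$ for every $t\in[0,1]$ and every $\varepsilon\in(0,1]$. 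The whole point of the $\varepsilon$-bookkeeping is that the $\varepsilon$ in front of the spatial operators and the $\sqrt{\varepsilon}$ in front of the noise make the Gronwall constant bounded by an $\varepsilon$-independent quantity.

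Finally I would integrate over $t\in[0,1]$ to obtain $\|u^{\varepsilon,k}-u^{\varepsilon}\|_{L^1(\Omega\times[0,1]\times\mathbb{T}^d)}\le e^{C}\|f_k-f\|_{L^1(\mathbb{T}^d)}$, and since $|\mathbb{T}^d|=1$ we have $\|f_k-f\|_{L^1(\mathbb{T}^d)}\le\|f_k-f\|_{L^p(\mathbb{T}^d)}\to 0$; taking the supremum over $\varepsilon\in(0,1]$ and then letting $k\to\infty$ proves the lemma. The main obstacle is the $L^1$-stability estimate itself: since $\mathrm{div}(A(u)\nabla u)$ is neither monotone nor locally monotone, a direct $H$-energy estimate on $u^{\varepsilon,k}-u^{\varepsilon}$ does not close — the cross term $\langle\nabla(u^{\varepsilon,k}-u^{\varepsilon}),(A(u^{\varepsilon,k})-A(u^{\varepsilon}))\nabla u^{\varepsilon}\rangle$ cannot be controlled without an $L^\infty$-bound on $\nabla u^{\varepsilon}$, which is unavailable for merely $L^p$ initial data — so one must use the doubling-of-variables technique, in which the (present and coercive, as $A\ge\varrho I$) parabolic dissipation absorbs all the bad terms. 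A secondary point to watch is that every constant in this estimate is independent of $\varepsilon\in(0,1]$ and of $k$, which follows from the uniform a priori bounds recalled above together with $\sup_k\|f_k\|_{L^p}<\infty$.
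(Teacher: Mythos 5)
Your proposal is correct and is essentially the paper's own argument: the paper likewise regularizes the $L^1$-norm by smooth convex functions $\phi_m$ with $|\phi_m'|\le 1$ and $0\le\phi_m''(x)\le 2/(m|x|)$, applies the chain rule to $\Phi_m(u^{\varepsilon,k}-u^{\varepsilon})$, uses the Lipschitz/boundedness properties of $B$, $A$, $\sigma$ together with the uniform-in-$\varepsilon$ a priori bounds (\ref{eqqq-44-1}), (\ref{eqqq-67-1}) to get $E\int_0^1\Phi_m(u^{\varepsilon,k}(t)-u^{\varepsilon}(t))\,dt\le \Phi_m(f_k-f)+C\varepsilon/m$, and then lets $m\to\infty$ and uses $\|f_k-f\|_{L^1}\le\|f_k-f\|_{L^p}\to 0$. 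The only caveats are terminological/mechanical: what you describe is the direct single-variable Kruzhkov regularization of \cite{H-Z} (no doubling of variables as in \cite{DHV} is actually needed, since the solutions are $H^1$-valued), and the cross term coming from the nonlinearity of $A$ is handled not by pure absorption into the dissipation but by the cancellation $\phi_m''(u^{\varepsilon,k}-u^{\varepsilon})\,|A(u^{\varepsilon,k})-A(u^{\varepsilon})|\le C/m$ (using Lipschitz continuity, and boundedness of $A$ if one first applies Young's inequality), which leaves an $O(\varepsilon/m)$ remainder involving $\int_0^1\|\nabla u^{\varepsilon}\|_H^2+\|\nabla u^{\varepsilon,k}\|_H^2\,ds$ that the a priori estimates control, so your Gronwall bound (and the resulting uniform-in-$\varepsilon$ $L^1$ estimate) indeed closes.
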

\begin{proof}
The proof is based on a suitable approximation of $L^1$ norm.
{\color{rr}
Let $1> a_1>a_2>\cdot\cdot\cdot>a_m>\cdot\cdot\cdot>0$ be a fixed sequence of decreasing positive numbers such that
\[
\int^{1}_{a_1}\frac{1}{r}dr=1,\ \cdot\cdot\cdot , \int^{a_{m-1}}_{a_m}\frac{1}{r}dr=m, \cdot\cdot\cdot
\]
Let $\psi_m(r)$ be a continuous function such that $supp(\psi_m)\subset (a_m, a_{m-1})$ and
\[
0\leq \psi_m(r)\leq 2\frac{1}{m}\times \frac{1}{r},\quad  \int^{a_{m-1}}_{a_m}\psi_m(r)dr=1.
\]
Define
\[
\phi_m(x)=\int^{|x|}_0\int^y_0\psi_m(r)drdy, \quad {\rm{for}}\ x\in \mathbb{R}.
\]
We have
\begin{eqnarray}\label{equ-00}
  \ |\phi'_m(x)|\leq 1,\ 0\leq\phi''_m(x)\leq 2\frac{1}{m}\times \frac{1}{|x|},
\end{eqnarray}
and
\begin{eqnarray}\label{equ-46}
\phi_m(x)\rightarrow |x|, \quad {\rm{as}}\ m\rightarrow\infty.
\end{eqnarray}}
Define a functional $\Phi_m:H\rightarrow \mathbb{R}$ by
\[
\Phi_m(\gamma)=\int_{\mathbb{T}^d}\phi_m(\gamma(z))dz,\quad \gamma\in H.
\]
Then, we have
\begin{eqnarray*}
\Phi'_m(\gamma)(h)&=&\int_{\mathbb{T}^d}\phi'_m(\gamma(z))h(z)dz.
\end{eqnarray*}
and
\begin{eqnarray*}
\Phi''_m(\gamma)(h,g)&=&\int_{\mathbb{T}^d}\phi''_m(\gamma(z))h(z)g(z)dz.
\end{eqnarray*}

From (\ref{eqq-5}), we obtain
\begin{eqnarray*}
&&u^{\varepsilon,k}(t)-u^{\varepsilon}(t)+\varepsilon \int^t_0 div(B(u^{\varepsilon,k}(s))-B(u^{\varepsilon}(s)))ds\\
&=& f_k-f+\varepsilon \int^t_0 div(A(u^{\varepsilon,k}(s)\nabla u^{\varepsilon,k}(s))-A(u^{\varepsilon}(s))\nabla u^{\varepsilon}(s) )ds\\
&&\ +\sqrt{\varepsilon} \int^t_0 (\sigma(u^{\varepsilon,k}(s) )-\sigma(u^{\varepsilon}(s)))dW(s).
\end{eqnarray*}
Applying the chain rule, we obtain
\begin{eqnarray*}
&&\Phi_m(u^{\varepsilon,k}(t)-u^{\varepsilon}(t))\\
&=& \Phi_m(f_k-f)+\int^t_0\Phi'_m(u^{\varepsilon,k}(s)-u^{\varepsilon}(s))d(u^{\varepsilon,k}(s)-u^{\varepsilon}(s))\\ \notag
&=& \Phi_m(f_k-f)+\varepsilon\int^t_0\int_{\mathbb{T}^d}\phi'_m(u^{\varepsilon,k}(s,z)-u^{\varepsilon}(s,z))\Big(-div(B(u^{\varepsilon,k}(s,z)))+div(B(u^{\varepsilon}(s,z)))\Big)dzds\\ \notag
&& +\varepsilon\int^t_0\int_{\mathbb{T}^d}\phi'_m(u^{\varepsilon,k}(s,z)-u^{\varepsilon}(s,z))\Big(div(A(u^{\varepsilon,k}(s,z))\nabla u^{\varepsilon,k}(s,z))-div(A(u^{\varepsilon}(s,z))\nabla u^{\varepsilon}(s,z))\Big)dzds\\ \notag
&& +\sqrt{\varepsilon}\int^t_0\int_{\mathbb{T}^d}\phi'_m(u^{\varepsilon,k}(s,z)-u^{\varepsilon}(s,z))\Big(\sigma(u^{\varepsilon,k}(s,z))-\sigma(u^{\varepsilon}(s,z))\Big)dzds\\ \notag
&& +\frac{\varepsilon}{2}\int^t_0tr[(\sigma(u^{\varepsilon,k}(s,z))-\sigma(u^{\varepsilon}(s,z)))^*\circ\Phi''_m(\sigma(u^{\varepsilon,k}(s,z))-\sigma(u^{\varepsilon}(s,z)))\circ(\sigma(u^{\varepsilon,k}(s,z))-\sigma(u^{\varepsilon}(s,z)))]ds\\ \notag
&:=&\Phi_m(f_k-f)+I^{m,\varepsilon,k}_1(t)+I^{m,\varepsilon,k}_2(t)+I^{m,\varepsilon,k}_3(t)+I^{m,\varepsilon,k}_4.
\end{eqnarray*}
Exactly argued as (3.8)-(3.10) in \cite{H-Z}, we have
\begin{eqnarray*}
I^{m,\varepsilon,k}_1(t)+I^{m,\varepsilon,k}_2(t)+I^{m,\varepsilon,k}_4(t)&\leq& \frac{C\varepsilon}{m}\int^t_0\int_{\mathbb{T}^d}|\nabla u^{\varepsilon,k}(s,z)|dzds+\frac{C\varepsilon}{m}\int^t_0\int_{\mathbb{T}^d}|\nabla u^{\varepsilon}(s,z)|dzds\\ \notag
&&+\frac{C\varepsilon}{m}\int^t_0\int_{\mathbb{T}^d}|\nabla u^{\varepsilon}(s,z)|^2dzds+\frac{C\varepsilon}{m}\int^t_0\int_{\mathbb{T}^d}|\nabla u^{\varepsilon,k}(s,z)|^2dzds\\
&& +\frac{C\varepsilon}{m}\int^t_0\int_{\mathbb{T}^d}|u^{\varepsilon,k}(s,z)-u^{\varepsilon}(s,z)|dzds.
\end{eqnarray*}
Hence, it yields
\begin{eqnarray*}
&&\int^1_0\Phi_m(u^{\varepsilon,k}(t)-u^{\varepsilon}(t))dt\\
&\leq& \Phi_m(f_k-f)+\frac{C\varepsilon}{m}\int^1_0\int_{\mathbb{T}^d}(1+|\nabla u^{\varepsilon}(s,z)|^2)dzds+\frac{C\varepsilon}{m}\int^1_0\int_{\mathbb{T}^d}(1+|\nabla u^{\varepsilon,k}(s,z)|^2)dzds\\
&& +\frac{C\varepsilon}{m}\int^1_0\int_{\mathbb{T}^d}|u^{\varepsilon,k}(s,z)-u^{\varepsilon}(s,z)|dzds+\int^1_0I^{m,\varepsilon,k}_3(t)dt.
\end{eqnarray*}
Then, taking expectation, we obtain
\begin{eqnarray*}
&&E\int^1_0\Phi_m(u^{\varepsilon,k}(t)-u^{\varepsilon}(t))dt\\
&\leq& \Phi_m(f_k-f)+\frac{C\varepsilon}{m}E\int^1_0\int_{\mathbb{T}^d}(1+|\nabla u^{\varepsilon}(s,z)|^2)dzds+\frac{C\varepsilon}{m}E\int^1_0\int_{\mathbb{T}^d}(1+|\nabla u^{\varepsilon,k}(s,z)|^2)dzds\\
&& +\frac{C\varepsilon}{m}E\int^1_0\int_{\mathbb{T}^d}|u^{\varepsilon,k}(s,z)-u^{\varepsilon}(s,z)|dzds.
\end{eqnarray*}
Taking into account (\ref{eqqq-44-1}) and (\ref{eqqq-67-1}), it follows that
\begin{eqnarray*}
E\int^1_0\Phi_m(u^{\varepsilon,k}(t)-u^{\varepsilon}(t))dt
\leq \Phi_m(f_k-f)+\frac{C\varepsilon}{m}.
\end{eqnarray*}
Letting $m\rightarrow \infty$, utilizing dominated convergence theorem and by (\ref{equ-46}), we deduce that for any $0<\varepsilon\leq 1$,
\begin{eqnarray*}
E\int^1_0|u^{\varepsilon,k}(t)-u^{\varepsilon}(t)|_{L^1}dt
\leq |f_k-f|_{L^1},
\end{eqnarray*}
which implies that
\begin{eqnarray*}
\sup_{0<\varepsilon\leq 1}E\int^1_0|u^{\varepsilon,k}(t)-u^{\varepsilon}(t)|_{L^1}dt
\leq |f_k-f|_{L^1}\rightarrow 0.
\end{eqnarray*}
Thus, we obtain the desired result.

\end{proof}

Now, we are in a position to give the proof of Proposition \ref{prp-2}.

\textbf{Proof of Proposition \ref{prp-2}}. \quad
With the help of Lemma \ref{lem-2}, by (\ref{eqqq-44-1})-(\ref{eqqq-58-1}) and utilizing Vitali's convergence theorem, we deduce that
\begin{eqnarray}\label{eqqq-46}
\sup_{0<\varepsilon\leq1}\|u^{\varepsilon, k}- u^{\varepsilon}\|_{L^p(\Omega\times [0,1]\times \mathbb{T}^d)}\rightarrow 0, \quad \forall p\in [1, \infty).
\end{eqnarray}

Applying It\^{o} formula to $\|u^{\varepsilon}(t)-u^{\varepsilon,k}(t)\|^2_H$, we have
\begin{eqnarray}\notag
\|u^{\varepsilon}(t)-u^{\varepsilon,k}(t)\|^2_H&=&\|f_k-f\|^2_H-2\varepsilon \int^t_0\langle u^{\varepsilon}(s)-u^{\varepsilon,k}(s), div(B(u^{\varepsilon}(s))-B(u^{\varepsilon,k}(s))) \rangle ds\\ \notag
&&\ +2\varepsilon \int^t_0\langle u^{\varepsilon}(s)-u^{\varepsilon,k}(s), div(A(u^{\varepsilon}(s)\nabla u^{\varepsilon}(s))-A(u^{\varepsilon,k}(s))\nabla u^{\varepsilon,k}(s) ) \rangle ds\\ \notag
&&\ +2\sqrt{\varepsilon}\int^t_0\langle u^{\varepsilon}(s)-u^{\varepsilon,k}(s),(\sigma(\varepsilon s,u^{\varepsilon}(s) )-\sigma(\varepsilon s,u^{\varepsilon,k}(s)))dW(s)\rangle\\ \notag
&&\ +\varepsilon\int^t_0\|\sigma(\varepsilon s,u^{\varepsilon}(s) )-\sigma(\varepsilon s,u^{\varepsilon,k}(s))\|^2_{\mathcal{L}_2(U,H)}ds\\ \notag
&\leq&\|f_k-f\|^2_H+2\varepsilon \int^t_0\langle \nabla (u^{\varepsilon}(s)-u^{\varepsilon,k}(s)), B(u^{\varepsilon}(s))-B(u^{\varepsilon,k}(s)) \rangle ds\\ \notag
&&\ -2\varepsilon \int^t_0\langle \nabla( u^{\varepsilon}(s)-u^{\varepsilon,k}(s)), A(u^{\varepsilon}(s)\nabla u^{\varepsilon}(s))-A(u^{\varepsilon,k}(s))\nabla u^{\varepsilon,k}(s)  \rangle ds\\ \notag
&&\ +2\sqrt{\varepsilon}\int^t_0\langle u^{\varepsilon}(s)-u^{\varepsilon,k}(s),(\sigma(\varepsilon s,u^{\varepsilon}(s) )-\sigma(\varepsilon s,u^{\varepsilon,k}(s)))dW(s)\rangle\\ \notag
&&\ +\varepsilon\int^t_0\|\sigma(\varepsilon s,u^{\varepsilon}(s) )-\sigma(\varepsilon s,u^{\varepsilon,k}(s))\|^2_{\mathcal{L}_2(U,H)}ds\\ \notag
\label{eqqq-46-1}
&:=& \|f_k-f\|^2_H+I_1(t)+I_2(t)+I_3(t)+I_4(t).
\end{eqnarray}
By Hypothesis H1 and Young's inequality, we get
\begin{eqnarray}\notag
I_1(t)&\leq & 2\varepsilon C\int^t_0\|u^{\varepsilon}(s)-u^{\varepsilon,k}(s)\|_{H^1} \|u^{\varepsilon}(s)-u^{\varepsilon,k}(s)\|_{H}ds\\
\label{eqqq-47}
&\leq & \varepsilon \varrho \int^t_0\|u^{\varepsilon}(s)-u^{\varepsilon,k}(s)\|^2_{H^1} ds+\frac{\varepsilon C}{\varrho}\int^t_0\|u^{\varepsilon}(s)-u^{\varepsilon,k}(s)\|^2_{H} ds.
\end{eqnarray}
By Hypothesis H1, we deduce that
\begin{eqnarray*}
I_2(t)
&=&-2\varepsilon \int^t_0\langle \nabla(u^{\varepsilon}(s)-u^{\varepsilon,k}(s)), A(u^{\varepsilon}(s))\nabla (u^{\varepsilon}(s)- u^{\varepsilon,k}(s))  \rangle ds\\
&&\ -2\varepsilon \int^t_0\langle \nabla(u^{\varepsilon}(s)-u^{\varepsilon,k}(s)), (A(u^{\varepsilon}(s))-A(u^{\varepsilon,k}(s)))\nabla u^{\varepsilon,k}(s)  \rangle ds\\
&\leq& -2\varepsilon \varrho\int^t_0\|u^{\varepsilon}(s)-u^{\varepsilon,k}(s)\|^2_{H^1} ds\\
&&\ +2\varepsilon \int^t_0\|u^{\varepsilon}(s)-u^{\varepsilon,k}(s)\|_{H^1}\Big(\int_{\mathbb{T}^d}|A(u^{\varepsilon}(s,x))-A(u^{\varepsilon,k}(s))|^2|\nabla u^{\varepsilon,k}(s,x)|^2dx\Big)^{\frac{1}{2}}ds\\
&\leq& -\frac{3}{2}\varepsilon \varrho\int^t_0\|u^{\varepsilon}(s)-u^{\varepsilon,k}(s)\|^2_{H^1} ds\\
&&\ +\frac{\varepsilon C}{\varrho}\int^t_0\int_{\mathbb{T}^d}|A(u^{\varepsilon}(s,x))-A(u^{\varepsilon,k}(s,x))|^2|\nabla u^{\varepsilon,k}(s,x)|^2dxds.
\end{eqnarray*}
Set
\[
N^{\varepsilon, k}(t):=\frac{\varepsilon C}{\varrho}\int^t_0\int_{\mathbb{T}^d}|A(u^{\varepsilon}(s,x))-A(u^{\varepsilon,k}(s,x))|^2|\nabla u^{\varepsilon,k}(s,x)|^2dxds.
\]
Hence, it follows that
\begin{eqnarray}\label{eqqq-48}
I_2(t)&\leq& -\frac{3}{2}\varepsilon \varrho\int^t_0\|u^{\varepsilon}(s)-u^{\varepsilon,k}(s)\|^2_{H^1} ds+N^{\varepsilon, k}(t).
\end{eqnarray}
By Hypothesis H1, it yields
\begin{eqnarray}\label{eqqq-49}
I_4(t)\leq \varepsilon C\int^t_0\|u^{\varepsilon}(s)-u^{\varepsilon,k}(s)\|^2_H ds.
\end{eqnarray}
Combing (\ref{eqqq-46-1})-(\ref{eqqq-49}), we deduce that
\begin{eqnarray*}
\|u^{\varepsilon}(t)-u^{\varepsilon,k}(t)\|^2_H&\leq & \|f_k-f\|^2_H+ N^{\varepsilon, k}(t)+\Big(\varepsilon C+\frac{\varepsilon C}{\varrho}\Big)\int^t_0\|u^{\varepsilon}(s)-u^{\varepsilon,k}(s)\|^2_{H} ds+|I_3(t)|.
\end{eqnarray*}
By Gronwall inequality, it follows that
\begin{eqnarray}\label{eqqq-51}
\|u^{\varepsilon}(t)-u^{\varepsilon,k}(t)\|^2_H&\leq&  \exp\Big\{\Big(\varepsilon C+\frac{\varepsilon C}{\varrho}\Big)t\Big\}\Big[\|f_k-f\|^2_H+N^{\varepsilon, k}(t)+|I_3(t)|\Big].
\end{eqnarray}
Using similar argument as (\ref{eqqq-2}), we deduce that
\begin{eqnarray*}
&&\Big(E\sup_{0\leq s\leq t}\|u^{\varepsilon}(s)-u^{\varepsilon,k}(s)\|^{2p}_H\Big)^{\frac{2}{p}}\\
&\leq&  \exp\Big\{\Big(\varepsilon C+\frac{\varepsilon C}{\varrho}\Big) t\Big\}\Big[\|f_k-f\|^4_H+\Big(E\sup_{0\leq s\leq t}|N^{\varepsilon, k}(s)|^p\Big)^{\frac{2}{p}}\Big]\\
&&\ +8\varepsilon C\exp\Big\{\Big(\varepsilon C+\frac{\varepsilon C}{\varrho}\Big)t\Big\}\int^t_0(E\sup_{0\leq l\leq s}\|u^{\varepsilon}(l)-u^{\varepsilon,k}(l)\|^{2p}_H)^{\frac{2}{p}}ds.
\end{eqnarray*}
Applying Gronwall inequality again, it yields that
\begin{eqnarray}\notag
&&\Big(E\sup_{0\leq s\leq 1}\|u^{\varepsilon}(s)-u^{\varepsilon,k}(s)\|^{2p}_H\Big)^{\frac{2}{p}}\\
\notag
&\leq&  \exp\Big\{\Big(\varepsilon C+\frac{\varepsilon C}{\varrho}\Big) \Big\}\Big[\|f_k-f\|^4_H+\Big(E\sup_{0\leq s\leq 1}|N^{\varepsilon, k}(s)|^p\Big)^{\frac{2}{p}}\Big]\\ \label{eqqq-52}
&&\ \times \exp\Big\{8\varepsilon C\exp\Big\{\Big(\varepsilon C+\frac{\varepsilon C}{\varrho}\Big)\Big\}\Big\}.
\end{eqnarray}
Let $p=\frac{2}{\varepsilon}$ in (\ref{eqqq-52}) and by Chebyshev inequality, we get
\begin{eqnarray*}
&&\varepsilon \log P\Big(\sup_{0\leq s\leq 1}\|u^{\varepsilon}(s)-u^{\varepsilon,k}(s)\|^{2}_H>\delta\Big)\\
&\leq& \log \Big(E\sup_{0\leq s\leq 1}\|u^{\varepsilon}(s)-u^{\varepsilon,k}(s)\|^{2p}_H\Big)^{\frac{2}{p}}-2\log \delta\\
&\leq&  \varepsilon C+\frac{\varepsilon C}{\varrho}+8\varepsilon C\exp\Big\{\Big(\varepsilon C+\frac{\varepsilon C}{\varrho}\Big) \Big\}\\
&&\ +\log\Big[\|f_k-f\|^4_H+\Big(E\sup_{0\leq s\leq 1}|N^{\varepsilon, k}(s)|^p\Big)^{\frac{2}{p}}\Big],
\end{eqnarray*}
which implies that
\begin{eqnarray}\notag
&&\sup_{0<\varepsilon \leq 1}\varepsilon \log P\Big(\sup_{0\leq s\leq 1}\|u^{\varepsilon}(s)-u^{\varepsilon,k}(s)\|^{2}_H>\delta\Big)\\ \notag
&\leq& C+\frac{ C}{\varrho}+8 C\exp\Big\{C+\frac{C}{\varrho} \Big\}\\
\label{eqqq-53}
&&\ +\log\Big[\|f_k-f\|^4_H+\sup_{0<\varepsilon\leq 1}\Big(E\sup_{0\leq s\leq 1}|N^{\varepsilon, k}(s)|^p\Big)^{\frac{2}{p}}\Big].
\end{eqnarray}
For any constant $R>0$, by (ii) in Hypothesis H1, it yields that
\begin{eqnarray*}
N^{\varepsilon, k}(t)&=&\frac{\varepsilon C}{\varrho}\int^t_0\int_{\mathbb{T}^d}|A(u^{\varepsilon}(s,x))-A(u^{\varepsilon,k}(s,x))|^2|\nabla u^{\varepsilon,k}(s,x)|^2I_{\{|\nabla u^{\varepsilon,k}(s,x)|\leq R\}}dxds\\
&&\ +\frac{\varepsilon C}{\varrho}\int^t_0\int_{\mathbb{T}^d}|A(u^{\varepsilon}(s,x))-A(u^{\varepsilon,k}(s,x))|^2|\nabla u^{\varepsilon,k}(s,x)|^2I_{\{|\nabla u^{\varepsilon,k}(s,x)|> R\}}dxds\\
&\leq& \frac{\varepsilon C}{\varrho}R^2\int^t_0\|u^{\varepsilon}(s)-u^{\varepsilon,k}(s)\|^2_Hds+\frac{\varepsilon C}{\varrho}\int^t_0\int_{\mathbb{T}^d}|\nabla u^{\varepsilon,k}(s,x)|^2I_{\{|\nabla u^{\varepsilon,k}(s,x)|> R\}}dxds.
\end{eqnarray*}
Hence,
\begin{eqnarray}\notag
\sup_{0<\varepsilon\leq 1}\Big(E\sup_{0\leq s\leq 1}|N^{\varepsilon, k}(s)|^p\Big)
&\leq & \frac{C}{\varrho^p}R^{2p}\sup_{0<\varepsilon\leq 1} \Big(E\int^1_0\|u^{\varepsilon}(s)-u^{\varepsilon,k}(s)\|^{2p}_Hds\Big)\\
\label{eqqq-55}
&&\ +\frac{C}{\varrho^p}\sup_{0<\varepsilon\leq 1} \Big( E\Big|\int^1_0\int_{\mathbb{T}^d}|\nabla u^{\varepsilon,k}(s,x)|^2I_{\{|\nabla u^{\varepsilon,k}(s,x)|> R\}}dxds\Big|^p\Big).
\end{eqnarray}
We claim that
\begin{eqnarray}\label{eqqq-59}
\lim_{k\rightarrow \infty}\sup_{0<\varepsilon\leq 1}\Big(E\sup_{0\leq s\leq 1}|N^{\varepsilon, k}(s)|^p\Big)^{\frac{2}{p}}=0.
\end{eqnarray}
Indeed,  by using the same argument as the proof of (\ref{eqqq-58}), we deduce that for any $p\geq 1$,
\begin{eqnarray}\label{eqqq-54}
\sup_{0<\varepsilon\leq 1}E\Big|\int^1_0\| u^{\varepsilon,k}(s)\|^2_{H^1}ds\Big|^p<\infty,
\end{eqnarray}
hence, by (\ref{eqqq-54}), for any $\iota>0$, there exists a constant $R_0$ such that
\begin{eqnarray}\label{eqqq-56}
\sup_{0<\varepsilon\leq 1} E\Big|\int^1_0\int_{\mathbb{T}^d}|\nabla u^{\varepsilon,k}(s,x)|^2I_{\{|\nabla u^{\varepsilon,k}(s,x)|> R_0\}}dxds\Big|^p\leq \frac{\varrho^p \iota^{\frac{p}{2}}}{2C}.
\end{eqnarray}
Taking $R=R_0$ in (\ref{eqqq-55}) and by using (\ref{eqqq-56}), we get
\begin{eqnarray}\label{eq-9}
\sup_{0<\varepsilon\leq 1}E\sup_{0\leq s\leq 1}|N^{\varepsilon, k}(s)|^p
\leq  \frac{C}{\varrho^p}R^{2p}_0 \sup_{0<\varepsilon\leq 1} E\int^1_0\|u^{\varepsilon}(s)-u^{\varepsilon,k}(s)\|^{2p}_Hds+\frac{\iota^{\frac{p}{2}}}{2}.
\end{eqnarray}
In view of (\ref{eqqq-46}), we have
\begin{eqnarray*}
\sup_{0<\varepsilon\leq 1}E\int^1_0\|u^{\varepsilon}(s)-u^{\varepsilon,k}(s)\|^{2p}_Hds\leq \sup_{0<\varepsilon\leq 1} E\int^1_0\|u^{\varepsilon}(s)-u^{\varepsilon,k}(s)\|^{2p}_{L^{2p}}ds\rightarrow 0, \quad k\rightarrow \infty,
\end{eqnarray*}
which implies that there exists $K_0$ such that for any $k\geq K_0$,
\begin{eqnarray}\label{eqqq-57}
\sup_{0<\varepsilon\leq 1}E\int^1_0\|u^{\varepsilon}(s)-u^{\varepsilon,k}(s)\|^{2p}_Hds\leq \frac{\varrho^p \iota^{\frac{p}{2}} }{2 C R^{2p}_0}.
\end{eqnarray}
Combing (\ref{eq-9}) and (\ref{eqqq-57}), we deduce that there exists $K_0$ such that for any $k\geq K_0$,
\begin{eqnarray*}
\sup_{0<\varepsilon\leq 1}E\sup_{0\leq s\leq 1}|N^{\varepsilon, k}(s)|^p
\leq  \iota^{\frac{p}{2}},
\end{eqnarray*}
thus,
\begin{eqnarray*}
\sup_{0<\varepsilon\leq 1}\Big(E\sup_{0\leq s\leq 1}|N^{\varepsilon, k}(s)|^p\Big)^{\frac{2}{p}}\leq\Big(\sup_{0<\varepsilon\leq 1}E\sup_{0\leq s\leq 1}|N^{\varepsilon, k}(s)|^p\Big)^{\frac{2}{p}}
\leq \iota.
\end{eqnarray*}
By the arbitrary of $\iota$, we complete the verification of (\ref{eqqq-59}).

Utilizing (\ref{eqqq-53}) and (\ref{eqqq-59}), it yields that
\begin{eqnarray}\label{eqqq-63}
\lim_{k\rightarrow \infty}\sup_{0<\varepsilon \leq 1}\varepsilon \log P\Big(\sup_{0\leq s\leq 1}\|u^{\varepsilon}(s)-u^{\varepsilon,k}(s)\|^{2}_H>\delta\Big)=-\infty.
\end{eqnarray}
Denote by $v^{\varepsilon,k}$ and $v^{\varepsilon}$ the solution of (\ref{eq-7}) with respect to the initial value $f_k$ and $f$, respectively.
Using similar argument as the proof of (\ref{eqqq-52}), we obtain
\begin{eqnarray*}
&&\Big(E\sup_{0\leq s\leq t}\|v^{\varepsilon}(s)-v^{\varepsilon,k}(s)\|^{2p}_H\Big)^{\frac{2}{p}}\\
&\leq& \varepsilon^2 t^2\|f_k-f\|^4_He^{\varepsilon C  t} +8\varepsilon Ce^{\varepsilon C  t}\int^t_0\Big(E\sup_{0\leq l\leq s}\|v^{\varepsilon}(l)-v^{\varepsilon,k}(l)\|^{2p}_H\Big)^{\frac{2}{p}}ds.
\end{eqnarray*}
By Gronwall inequality, it follows that
\begin{eqnarray}\notag
&&\Big(E\sup_{0\leq s\leq t}\|v^{\varepsilon}(s)-v^{\varepsilon,k}(s)\|^{2p}_H\Big)^{\frac{2}{p}}\\
\label{eqqq-73}
&\leq& \varepsilon^2 t^2e^{\varepsilon C  t}\|f_k-f\|^4_H\exp\{8\varepsilon C te^{\varepsilon C  t}\}.
\end{eqnarray}
Let $p=\frac{2}{\varepsilon}$ in (\ref{eqqq-73}) and by Chebyshev inequality, we get
\begin{eqnarray*}
&&\varepsilon \log P\Big(\sup_{0\leq s\leq 1}\|v^{\varepsilon}(s)-v^{\varepsilon,k}(s)\|^{2}_H>\delta\Big)\\
&\leq& \log \Big(E\sup_{0\leq s\leq 1}\|v^{\varepsilon}(s)-v^{\varepsilon,k}(s)\|^{2p}_H\Big)^{\frac{2}{p}}-2\log \delta\\
&\leq& 8\varepsilon C e^{\varepsilon C }+\varepsilon C+\log\Big[ \varepsilon^2 \|f_k-f\|^4_H\Big]
\end{eqnarray*}
Then, it follows that
\begin{eqnarray*}
&&\sup_{0<\varepsilon\leq 1}\varepsilon \log P\Big(\sup_{0\leq s\leq 1}\|v^{\varepsilon}(s)-v^{\varepsilon,k}(s)\|^{2}_H>\delta\Big)\\
&\leq& 8C e^{C  }+C +\log\Big[ \|f_k-f\|^4_H\Big],
\end{eqnarray*}
which yields
\begin{eqnarray}\label{eqqq-61}
\lim_{k\rightarrow \infty}\sup_{0<\varepsilon \leq 1}\varepsilon \log P\Big( \sup_{0\leq s\leq 1}\|v^{\varepsilon,k}(s)-v^{\varepsilon}(s)\|^2_H>\delta\Big)=-\infty.
\end{eqnarray}
From Proposition \ref{prp-0}, we know that
\begin{eqnarray}\label{eqqq-39}
\lim_{\varepsilon\rightarrow 0}\varepsilon \log P\Big( \sup_{0\leq s\leq 1}\|u^{\varepsilon,k}(s)-v^{\varepsilon,k}(s)\|^2_H>\delta\Big)=-\infty, \quad \forall k\geq 1.
\end{eqnarray}
Finally, with  the help of (\ref{eqqq-63}), (\ref{eqqq-61}) and (\ref{eqqq-39}), we complete the proof.

\

\noindent{\bf  Acknowledgements}\quad  The authors are grateful to the anonymous referees for comments and suggestions. This work is partly supported by National Natural Science Foundation of China (No. 11801032). Key Laboratory of Random Complex Structures and Data Science, Academy of Mathematics and Systems Science, Chinese Academy of Sciences (No. 2008DP173182). China Postdoctoral Science Foundation funded project (No. 2018M641204).

%%%%%%%%%%%%%%%%%%%%%%%%%%%%%%%%%%%%%%%%%%%%%%%%%%%%%%
\def\refname{ References}


\begin{thebibliography}{2}
\bibitem{A-K} S. Aida, H. Kawabi: \emph{Short time asymptotics of a certain infinite dimensional diffusion process.}  Stochastic analysis and related topics, VII (Kusadasi, 1998), 77-124,
Progr. Probab., 48, Birkh\"{a}user Boston, Boston, MA, 2001.
\bibitem{A-Z} S. Aida, T. Zhang: \emph{On the small time asymptotics of diffusion processes on path groups.}
Potential Anal. 16, no. 1, 67-78 (2002).

\bibitem{B-Y} M.T. Barlow and M. Yor: \emph{Semi-martingale inequalities via the Garsia-Rodemich-Rumsey lemma, and applications to local time}. J. Funct. Anal. 49 198-229 (1982).
%\bibitem{Adams} R.A. Adams:  \emph{Sobolev Space}. New York: Academic Press, 1975.
%\bibitem{RDS} L. Arnold: \emph{Random Dynamical Systems}. Springer Monographs in Mathematics.
%Springer-Verlag, Berlin, 1998.
\bibitem{Br} Z. Brze\'{z}niak: \emph{ On stochastic convolution in Banach spaces and applications}. Stoch. Stoch. Rep. 61 (3-4) 245-295 (1997).
\bibitem{BP} Z. Brze\'{z}niak, S. Peszat: \emph{Space-time continuous solutions to SPDEs driven by a homogeneous Wiener process.} Studia Mathematica 137 (3) 261-299 (1999).
 %\bibitem{MP} M. Bou\'{e}, P. Dupuis: \emph{A variational representation for certain functionals of Brownian motion}. Ann. Probab. 26 (4) 1641-1659 (1998).
%\bibitem{BD} A. Budhiraja, P. Dupuis:  \emph{A variational representation for positive functionals of infinite dimensional Brownian motion}. Probab. Math. Statist. 20, 39-61 (2000).
% %\bibitem{BDM} A. Budhiraja, P. Dupuis, V. Maroulas: \emph{Large deviations for infinite dimensional stochastic dynamical systems.} Ann. Probab., 36 (4), pp. 1390-1420 (2008).
%\bibitem{C-T-1} C. Cao, E.S. Titi: \emph{Global well-posedness of the three-dimensional viscous primitive equations of large-scale ocean and atmosphere dynamics}. Ann. of Math. 166, 245-267 (2007).
%\bibitem{C-T-2} C. Cao, E.S. Titi: \emph{Global well-posedness and finite-dimensional global attractor for a 3D planetary geostrophic viscous model}. Comm. Pure Appl. Math. 56, no. 2, 198-233 (2003).
%\bibitem{RA} H. Crauel, A. Debussche, F. Flandoli: \emph{Random Attractor}.  J. Dynam. Differential Equations 9 , no. 2, 307-341 (1997).
%    \bibitem{CF} H. Crauel, F. Flandoli: \emph{Attractors for random dynamical systems}.
%Probab. Theory Related Fields 100, no. 3, 365-393 (1994).
%\bibitem{D-G-T} A. Debussche, N. Glatt-Holtz, R. Temam: \emph{Local martingale and pathwise solutions for an abstract fluids model}. Physical D 240 1123-1144 (2011).


\bibitem{Daprato} G. Da Prato and J. Zabczyk: \emph{Stochastic Equations in Infinite Dimensions.} Cambridge Univ. Press, Cambridge, 1992.

%\bibitem{D-G-T-Z}  A. Debussche, N. Glatt-Holtz, R. Temam, M. Ziane: \emph{Global existence and regularity for the 3D stochastic primitive equations of the ocean and atmosphere with
%multiplicative white noise}.  Nonlinearity, 25, 2093-2118 (2012).
\bibitem{Davis} B. Davis: \emph{On the $L^p$ norms of stochastic integrals and other martingales}. Duke Math. J. 43 697-704 (1976).
\bibitem{DHV} A. Debussche, M. Hofmanov\'{a}, and J. Vovelle: \emph{Degenerate parabolic stochastic partial differential equations: Quasilinear case}. Ann. Probab. 44, no. 3, 1916-1955 (2016).
\bibitem{DDMH} A. Debussche, S. De Moor and M. Hofmanov\'{a}: \emph{A regularity result for quasilinear stochastic partial differnential equations of parabolic type.} SIAM J.Math.Anal. 47, no.2, 1590-1614 (2015).

 \bibitem{DZ} A. Dembo, O. Zeitouni: \emph{Large deviations techniques and applications}. Jones and Bartlett, Boston, (1993).
     \bibitem{D-Z} Z. Dong, R. Zhang: \emph{On the small time asymptotics of 3D
  stochastic primitive equations}. Math. Methods Appl. Sci. 41, no. 16, 6336-6357 (2018).
     \bibitem{DZZ} Z. Dong, R. Zhang, T. Zhang: \emph{Large deviations for quasilinear parabolic stochastic partial differential equations.} Potential Anal (2019). https://doi.org/10.1007/s11118-019-09763-1.
     %\bibitem{RR}: \emph{Exponential mixing for 3D stochastic primitive         equations of the large scale ocean.} preprint. Available at arXiv: 1506.08514.
  %\bibitem{DE} P. Dupuis, R.S. Ellis: \emph{A weak convergence approach to the theory of large deviations.} Wiley, New York, 1997.
      \bibitem{F-Z}S. Fang, T. Zhang: \emph{
On the small time behavior of Ornstein-Uhlenbeck processes with unbounded linear drifts.}
Probab. Theory Related Fields 114, no. 4, 487-504 (1999).
%\bibitem{FG95}  F. Flandoli, D. Gatarek: \emph{Martingale and stationary solutions for stochastic Navier-Stokes equations}. Probab. Theory Related Fields 102, no. 3, 367-391 (1995).
%\bibitem{C} Chavel, I. \emph{Eigenvalues in Riemannian geometry.} Pure and Applied Mathematics, 115. Academic, Orlando, Fla., 1984.
%\bibitem{G-S} H. Gao, C. Sun: \emph{Well-posedness and large deviations for the stochastic primitive equations in two space dimensions}. Commun. Math. Sci. Vol.10, No.2, 575-593 (2012).
   %\bibitem{Gr} A. Grigor'yan: \emph{Heat kernal and analysis on manifold.} AMS/IP, 2009.
\bibitem{GMT} G. Gagneux, M. Madaune-Tort, \emph{Analyse math\'{e}matique de mod\'{e}les non lin\'{e}aires de l'ing\'{e}nierie p\'{e}troli\'{e}re}, Springer-Verlag, Berlin, 1996.

%\bibitem{GK} I. Gy\"{o}ngy, N. Krylov:
%\emph{Existence of strong solutions for It\^{o}'s stochastic equations via approximations}.
%Probab. Theory Related Fields 105, no. 2, 143-158 (1996).



%\bibitem{Guo} B. Guo, D. Huang: \emph{3D stochastic primitive equations of the large-scale ocean: global well-posedness and attractors .} Comm. Math. Phys. 286, no. 2, 697-723 (2009).
    %\bibitem{Hairer} Hairer, M.: \emph{Exponential Mixing Properties of stochastic PDEs through Asymptotic coupling.} Prob. Theory Rel. Fields 124(3), 345-380 (2002).
     %\bibitem{HTZ} C. Hu, R. Temam, M. Ziane: \emph{ Regularity results for linear elliptic problems related to the primitive equations}. Frontiers in mathematical analysis and numerical methods, World Sci. Publ., River Edge, NJ, 149-170 (2004).
     %\bibitem{K} S. Kuksin: \emph{On exponential convergence to a stationary measure for nonlinear PDEs.} Partial differential equations, 161-176 (2002).
%    %\bibitem{K-S} Kuksin, S., Shirikyan, A. \emph{Ergodicity for the randomly forced 2D Navier-Stokes equations.} Math. Phys. Anal.Geom. 4, 147-195 (2001).
%    \bibitem{K-S-1} S. Kuksin, A. Shirikyan: \emph{A coupling approach to randomly forced PDE's I.} Comm. Math. Phys. 221, no. 2, 351-366 (2001).
%\bibitem{K-S-2} S. Kuksin, A. Piatnitski, A.  Shirikyan: \emph{A coupling approach to randomly forced PDE's II.} Comm. Math. Phys. 230 (1), 81-85 (2002).
    \bibitem{H-R} M. Hino, J. Ram\'{i}rez: \emph{Small-time Gaussian behavior of symmetric diffusion semigroups.}
Ann. Probab. 31, no. 3, 1254-1295 (2003).
\bibitem{H-Z}M. Hofmanov\'{a}, T. Zhang: \emph{Quasilinear parabolic stochastic differential equations: existence, uniqueness.}  Stochastic Process. Appl. 127, no. 10, 3354-3371 (2017).

%\bibitem{L-T-W-1} J.L. Lions, R. Temam, S. Wang: \emph{New formulations of the primitive equations of atmosphere and applications.} Nonlinearity 5, 237-288 (1992).
% \bibitem{L-T-W-2} J.L. Lions, R. Temam, S. Wang: \emph{On the equations of the large scale ocean.} Nonlinearity 5, 1007-1053 (1992).
%\bibitem{L-T-W-3} J.L. Lions, R. Temam, S. Wang: \emph{Models of the coupled atmosphere and ocean.} Computational Mechanics Advance 1, 1-54 (1993).
%\bibitem{L-T-W-4} J.L. Lions, R. Temam, S. Wang: \emph{Mathematical theory for the coupled atmosphere-ocean models.} J. Math. Pures Appl. 74, 105-163 (1995).
    %\bibitem{LR} W. Liu, M. R\"{o}ckner: \emph{SPDEs in Hilbert space with locally monotone coefficients.} Journal of Functional Analysis 259, 2902-2922 (2010).
   % \bibitem{Liu} W. Liu, M. R\"{o}ckner, X. Zhu: \emph{Large deviation principles for the stochastic quasi-geostrophic equations}. Stochastic Process. Appl. 123, no. 8, 3299-3327 (2013).
     %\bibitem{M} Mattingly, J. \emph{Exponential converggence for the stochastically forced Navier-Stokes equations and other partially dissipative dynamics.} Commum. Math. Phys. 230, 421-462 (2002).
         %\bibitem{O-C} C. Odasso: \emph{Exponential mixing for stochastic PDEs: the non-additive case}  Probab. Theory Related Fields 140, no. 1-2, 41-82 (2008).
%\bibitem{O-C-1} C. Odasso: \emph{Exponential mixing for the 3D stochastic Navier-Stokes equations}  Comm. Math. Phys. 270, no. 1, 109-139 (2007).
 %\bibitem{O-C-2} Odasso, C. \emph{Ergodicity for the stochastic Complex Ginzburg-Landau equations.} Ann. Inst. H. Poincar¨¦ Probab. Statist. 42, no. 4, 417¨C454 (2006).
 %\bibitem{JP} J. Pedlosky: \emph{Geophysical Fluid Dynamics}. Springer-Verlag, New York, 1987.
%
%\bibitem{STW} R. Samelson, R. Temam, S. Wang: \emph{Some mathematical properties of the planetary geostrophic equations for large-scale ocean circulation.} Appl. Anal. 70, no. 1-2, 147-173 (1998).
%
%\bibitem{Temam} R. Temam : \emph{Navier-Stokes equations and nonlinear functional analysis}. Second edition. CBMS-NSF Regional Conference Series in Applied Mathematics, 66. Society for Industrial and Applied Mathematics (SIAM), Philadelphia, PA, 1995.
%\bibitem{Z} M. Ziane: \emph{Regularity results of Stokes type system}.
%App. Anal.58 (1995), no. 3-4, 263-292.

\bibitem{V}S.R.S. Varadhan: \emph{
Diffusion processes in a small time interval.}
Comm. Pure Appl. Math. 20 659-685 (1967).
%\bibitem{Temam-1} R. Temam: \emph{Navier-Stokes equations and nonlinear functional analysis}. Second edition. CBMS-NSF Regional Conference Series in Applied Mathematics, 66. Society for Industrial and Applied Mathematics (SIAM), Philadelphia, PA, 1995.
%\bibitem{PR} C. Pr\'{e}v\^{o}t, M. R\"{o}ckner:  \emph{A concise course on stochastic partial differential equations}.
%Springer-Verlag  2007.
\bibitem{X-Z} T. Xu, T. Zhang: \emph{On the small time asymptotics of the two-dimensional stochastic Navier-Stokes equations}. Ann.Inst.Henri Poincar\'{e} Probab. Stat. 45(4) 1002-1019 (2009).
\bibitem{ZTS} T. Zhang: \emph{On the small time asymptotics of diffusion processes on Hilbert spaces.}
Ann. Probab. 28, no. 2, 537-557 (2000).

\end{thebibliography}
\end{document}